\documentclass[12pt]{article}
\usepackage{amsfonts,amsmath,amssymb}\pagestyle{plain}
\usepackage{slashed,setspace}
\usepackage[all]{xy}

\usepackage{fullpage}

\usepackage{enumerate}
\usepackage[active]{srcltx}
\def\leftnote#1{\vadjust{\setbox1=\vtop{\hsize 30mm\parindent=0pt\bf\baselineskip=9pt\rightskip=4mm plus 4mm#1}\hbox{\kern-3cm\smash{\box1}}}}

\newtheorem{theorem}{Theorem}[section]

\newtheorem{lemma}[theorem]{Lemma}
\newtheorem{remark}[theorem]{Remark}

\newtheorem{proposition}[theorem]{Proposition}

\newtheorem{notation}[theorem]{Notation}

\newenvironment{proof}[1][Proof]{\par\addvspace{2mm}\noindent\textbf{#1.} }{\ \rule{0.5em}{0.5em}\par\vspace{4mm}}
\newcommand{\Q}{\mathbb{Q}}
\newcommand{\bo}{{\mathfrak O}}
\newcommand{\D}{{\mathfrak D}}

\newcommand{\bp}{{\mathfrak P}}

\newcommand{\G}{\mathcal{G}}

\newcommand{\K}{\mathcal{K}}

\newcommand{\J}{\mathfrak{J}}

\DeclareMathOperator{\coker}{coker}
\DeclareMathOperator{\Gal}{Gal}

\DeclareMathOperator{\Hom}{Hom}

\begin{document}

 \bibliographystyle{plain}
\title{Construction of Self-Dual Integral Normal Bases in Abelian Extensions of Finite and Local Fields}
\author{Erik Jarl Pickett}

\maketitle

\begin{abstract}Let $F/E$ be a finite Galois extension of fields with abelian Galois group $\Gamma$. A self-dual normal basis for $F/E$ is a normal basis with the additional property that $Tr_{F/E}(g(x),h(x))=\delta_{g,h}$ for $g,h\in\Gamma$. Bayer-Fluckiger and Lenstra have shown that when $char(E)\neq 2$, then $F$ admits a self-dual normal basis if and only if $[F:E]$ is odd. If $F/E$ is an extension of finite fields and $char(E)=2$, then $F$ admits a self-dual normal basis if and only if the exponent of $\Gamma$ is not divisible by $4$. In this paper we construct self-dual normal basis generators for finite extensions of finite fields whenever they exist.

Now let $K$ be a finite extension of $\Q_p$, let $L/K$ be a finite abelian Galois extension of odd degree 
and let $\bo_L$ be the valuation ring of $L$. We define $A_{L/K}$ to be the unique
fractional $\bo_L$-ideal with square equal to the inverse different
of $L/K$. It is known that a self-dual integral normal basis exists for $A_{L/K}$ if and only if $L/K$ is weakly ramified. Assuming $p\neq 2$, we construct such bases whenever they exist. \end{abstract}


\section{Introduction}
Let $F/E$ be a finite Galois extension of fields with abelian Galois group $\Gamma$. For $x\in F$ we let $Tr_{F/E}(x)=Tr_{\Gamma}(x)=\sum_{\gamma\in\Gamma}{\gamma}(x)$ be the trace of $x$ in $E$. We define $T_{F/E}:F\times F\rightarrow E$ to be the symmetric
non-degenerate $E$-bilinear form associated to the trace map (i.e.,
$T_{F/E}(x,y)=Tr_{F/E}(xy)$) which is $\Gamma$-invariant in the sense
that $T_{L/K}({\gamma}(x),{\gamma}(y))=T_{L/K}(x,y)$ for all $\gamma$ in $\Gamma$.

For $x\in F$ we say that $x$ is a normal basis generator for $F$ over $E$ if $x$ is a generator of $F$ as a $E[\Gamma]$-module. If, in addition, $T_{L/K}(x,{\gamma}(x))=\delta_{1,\gamma}$ for all $\gamma\in\Gamma$, we say that $x$ is a self-dual normal basis generator for $F$ over $E$.
In \cite{Bayer-Lenstra}, Bayer-Fluckiger and Lenstra prove that when $char(E)\neq 2$, then $F$ admits a self-dual normal basis if and only if $[F:E]$ is odd. If $F/E$ is an extension of finite fields and $char(E)=2$, then $F$ admits a self-dual normal basis if and only if the exponent of $\Gamma$ is not divisible by $4$. We remark that if $x\in F$ is a self-dual element, i.e., $T_{L/K}(x,{\gamma}(x))=\delta_{1,\gamma}$ for all $\gamma\in\Gamma$, then $x$ is necessarily a normal basis generator for $F$ over $E$.

In this paper we give explicit constructions for self-dual normal bases at a field level for extensions of finite fields $\mathbb{F}_{q^n}/\mathbb{F}_q$. We also give explicit constructions at an integral level for abelian extensions of local fields $L/K$, giving generators of the unique Galois module $A_{L/K}$ whose square is equal to the inverse different. As these two subjects may be of interest to people with very different backgrounds we have tried to treat the two cases as separately as possible. However, some crossover is unavoidable.

Aside from their intrinsic interest, self-dual normal bases for extensions of finite fields are of use in encryption and have been used by Wang for constructing the Massey-Omura finite-field multiplier (see \cite{Wang}). There are results in the literature constructing such bases, see \cite{Gao}, \cite{Nogami_et_al} and \cite{Wang} but all put some restrictions on the degree of the extension or the characteristic of the base field. We present here constructions for self-dual normal bases for any extension of finite fields for which they exist. The method we use for these constructions is to describe normal basis generators and then modify them slightly to give self-dual elements. We use results of Semaev, \cite{Semaev}, to describe these normal bases, however there is a large amount of literature concerning normal basis generators for finite fields, see, \cite{Lenstra}, \cite{Lenstra-Schoof}, \cite{Sidel'nikov}, \cite{Stepanov-Shparlinsky} and \cite{von_zur_Gathen-Giesbrecht}.

We now assume $K$ to be a finite extension of $\Q_p$ and let $L/K$ be a finite Galois extension. Let $\D_{L/K}$ be the different of the extension $L/K$. When $\D_{L/K}$ has an even valuation we define $A_{L/K}$ to be the unique fractional ideal such that \[A_{L/K}=\D_{L/K}^{-1/2}\] and remark that $A_{L/K}=\bo_L$ when $L/K$ is unramified. The Galois module, $A_{L/K}$, is self-dual with respect to the trace form, i.e., \[A_{L/K}^D=\{a\in L:T_{L/K}(a,A_{L/K})\subseteq\bo_L\}=A_{L/K}.\] 
For $[L:K]$ odd, $\D_{L/K}$ always has an even valuation (see \cite{Serre}, IV Prop. 4) and Erez has proved, in \cite{Erez2}, that an integral normal basis for $A_{L/K}$ exists if and only if $L/K$ is at most weakly ramified. In other words, there exists an $x\in L$ such that $x$ generates $A_{L/K}$ as an $\bo_K [G]$-module if and only if the second ramification group of $L/K$ is trivial. Inspired by the results of Erez and Bayer-Fluckiger and Lenstra, we ask two questions about self-dual integral normal bases for $A_{L/K}$. Firstly, when do such bases exist? Secondly, when they do exist, can we describe them explicitly? 

The first question is now completely answered. With $L/K$ as above, no such basis will exist if $[L:K]$ is even (\cite{Bayer-Lenstra} Theorem 6.1a) and if $[L:K]$ is odd then such a basis exists if and only if $L/K$ is at most weakly ramified (\cite{Fainsilber+Morales}, Corollary 4.8 and \cite{Erez2}, Theorem $1$). 

There are only two results on the explicit construction of self-dual normal bases of $A_{L/K}$ known to us. The first is due to Erez, with $L/\Q_p$ and $L$ contained in some cyclotomic extension of $\Q_p$, \cite{Erez}. The second is due to the author and is a generalisation of Erez's results using Dwork's power series in extensions of local fields generated by Lubin-Tate formal groups, \cite{Pickett}. Specifically, the author studies cyclic degree $p$ extensions, $M/K$, of local fields where $K/\Q_p$ is unramified. Dwork's exponential power series can be used to explicitly describe generators of Kummer extensions, $L/K(\zeta_p)$, with $M\subset L$. If $x\in L$ is such a Kummer generator, then $(1+Tr_{L/M}(x))/p$ is a self-dual normal basis generator for $A_{M/K}$. In comparison, the main method used in this paper is to construct a normal basis generator for $A_{L/K}$, with $L/K$ abelian, and then use Galois resolvends to modify it to give a self-dual normal basis generator. We remark that this method is not needed in the tamely ramified case as then $L/K$ will be Kummer and such a basis can be described directly in terms of generators of $L/K$.

We use these methods to give explicit constructions for self-dual integral normal bases for $A_{L/K}$ whenever $L/K$ is abelian and $p\neq 2$. We remark that some of our constructions still work for $p=2$ and the only case needed for completeness is $L/K$ unramified with $[L:K]=r^i$ where $r$ is an odd prime and $i\in\mathbb{N}$. We should also remark that the constructions in \cite{Erez} and \cite{Pickett} are probably a lot more useful in terms of calculations of invariants such as resolvents and Galois Gauss sums.

This paper is structured into three sections. In Section \ref{key} we prove some key results that describe how to modify normal basis generators so that they generate self-dual normal bases. We give statements for both local and finite fields, but the methods of proof are very similar. In Section \ref{finite} we study extensions of finite fields of characteristic $p$. We reduce the problem to that of studying extensions with degree coprime to $p$ and extensions with degree a $p$th power. We then give constructions for self-dual normal bases for finite extensions of finite fields whenever they exist. In Section \ref{local} we study abelian extensions of local fields $L/K$ where $K$ is an extension of $\Q_p$. We reduce the problem to the separate cases of totally ramified extensions and unramified extensions. We then construct self-dual integral normal bases for $A_{L/K}$; whenever they exist in the totally ramified case; with the assumption that $p\neq 2$ in the unramified case.

\section{Key Results}\label{key}

Let $p$ be an odd prime. Let $K$ be a finite extension of $\Q_p$ with valuation ring $\bo_K$ and residue field $k$. Let $q$ be some power of $p$ and let $\mathbb{F}_q$ be the finite field of $q$ elements. Let $G$ be a finite abelian group. We begin by proving a number of results which will be key to constructing self-dual normal basis generators for extensions of $K$ or $\mathbb{F}_q$ with Galois group isomorphic to $G$. Most of the results for finite fields follow immediately by considering the residue fields in the corresponding result for local fields. We state the results separately to make the theory for finite fields complete in its own right.

Since $G$ is a finite abelian group we can let
$G=G_p\times H$ where $G_p$ is the $p$-primary component of $G$ and
$p\slashed{|}|H|$.

\begin{lemma}Let $\J(\bo_K[G])$ (resp. $\J(\mathbb{F}_q[G])$) be the Jacobson radical of $\bo_K [G]$ (resp. $\mathbb{F}_q[G]$) and $k$
be the residue field of $\bo_K$, we then have the the following exact
sequences.\label{exact sequence}
\begin{equation*}1\rightarrow1+\J(\bo_K[G])\rightarrow\bo_K [G]^{\times}\rightarrow
k[H]^{\times}\rightarrow 1\label{sequence}\end{equation*}\begin{equation*}1\rightarrow1+\J(\mathbb{F}_q[G])\rightarrow\mathbb{F}_q[G]^{\times}\rightarrow
\mathbb{F}_q[H]^{\times}\rightarrow 1.\label{sequence2}\end{equation*}
\end{lemma}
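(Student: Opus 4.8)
The plan is to reduce both statements to the single general fact that, for any commutative ring $R$ with Jacobson radical $\J(R)$, the set $1+\J(R)$ is a subgroup of $R^{\times}$ and the reduction homomorphism $R^{\times}\to(R/\J(R))^{\times}$ is surjective with kernel exactly $1+\J(R)$. Here surjectivity holds because if $u\in R$ lifts a unit of $R/\J(R)$ and $v$ lifts its inverse, then $uv=1-j$ for some $j\in\J(R)$, and $1-j\in R^{\times}$, so $u\in R^{\times}$; the statement about the kernel is immediate from $1+\J(R)\subseteq R^{\times}$. Granting this, each of the two exact sequences follows once we compute the quotient by the Jacobson radical, i.e. once we show $\mathbb{F}_q[G]/\J(\mathbb{F}_q[G])\cong\mathbb{F}_q[H]$ and, in the local case, $\bo_K[G]/\J(\bo_K[G])\cong k[H]$.

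For a group algebra $F[G]$ over a field $F$ of characteristic $p$ (we use $F=\mathbb{F}_q$ and $F=k$) I would argue as follows. Write $F[G]=F[H]\otimes_F F[G_p]$. Since $p\slashed{|}|H|$, Maschke's theorem shows $F[H]$ is semisimple, so $\J(F[H])=0$. On the other hand, the augmentation ideal $I$ of $F[G_p]$ is nilpotent: it is finitely generated and commutative, and each generator $g-1$ ($g\in G_p$) satisfies $(g-1)^{p^m}=g^{p^m}-1=0$ once $p^m$ is at least the exponent of $G_p$. Hence the ideal $I\cdot F[G]$ of $F[G]$ is nilpotent, so $I\cdot F[G]\subseteq\J(F[G])$; and the quotient $F[G]/(I\cdot F[G])\cong F[H]$ is semisimple, so its radical is zero. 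Therefore $\J(F[G])=I\cdot F[G]$ and $F[G]/\J(F[G])\cong F[H]$. Taking $F=\mathbb{F}_q$ proves the second exact sequence, and taking $F=k$ gives $k[G]/\J(k[G])\cong k[H]$, which is needed below.

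For the local case, $\bo_K$ is a complete Noetherian local ring with maximal ideal $\p_K$ and residue field $k$, and $\bo_K[G]$ is module-finite over $\bo_K$. Completeness makes every element of $1+\p_K\bo_K[G]$ a unit, so $\p_K\bo_K[G]\subseteq\J(\bo_K[G])$; and for any two-sided ideal $\mathfrak{a}$ contained in $\J(A)$ one has $\J(A/\mathfrak{a})=\J(A)/\mathfrak{a}$, so $\J(\bo_K[G])$ is the preimage of $\J(\bo_K[G]/\p_K\bo_K[G])=\J(k[G])$ under reduction. Combining with the previous paragraph, $\bo_K[G]/\J(\bo_K[G])\cong k[G]/\J(k[G])\cong k[H]$, and the general fact from the first paragraph then yields the first exact sequence. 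The only step carrying real content is the identification of $\J(F[G])$, which rests on just two inputs — Maschke's theorem applied to $H$ and nilpotence of the augmentation ideal of the $p$-group algebra $F[G_p]$ — so I do not expect a genuine obstacle; the one mild point to keep straight is which base ($\mathbb{F}_q$ versus $k$) appears on the right of each sequence.
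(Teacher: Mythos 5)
Your proposal is correct, but it follows a genuinely different route from the paper. The paper cites Karpilovsky for the explicit description of $\J(\bo_K[G])$ as the ideal generated by $\{\pi\}\cup\{g-1: g\in G_p\}$, defines the map $\theta$ concretely (send $g\in G_p$ to $1$, reduce mod $\pi$), gets surjectivity from the observation that $\bo_K[H]^{\times}$ already surjects onto $k[H]^{\times}$, verifies $\ker\theta=1+\J(\bo_K[G])$ by a direct computation, and then deduces the finite-field sequence from the local one by factoring through $\pi$. You instead prove the radical computations yourself — $\J(F[G])=I_{G_p}\cdot F[G]$ via Maschke's theorem for $F[H]$ plus nilpotence of the augmentation ideal of $F[G_p]$ in characteristic $p$, and $\J(\bo_K[G])$ as the preimage of $\J(k[G])$ using $\p_K\bo_K[G]\subseteq\J(\bo_K[G])$ — and then both exactness statements drop out of the standard fact that $R^{\times}\to(R/\J(R))^{\times}$ is surjective with kernel $1+\J(R)$; your logical order between the two sequences is also reversed (field case first, local case deduced from it). All the individual steps check out: the freshman's-dream argument for $(g-1)^{p^m}=0$, nilpotence of a finitely generated ideal with nilpotent generators in a commutative ring, $\J(A/\mathfrak{a})=\J(A)/\mathfrak{a}$ for $\mathfrak{a}\subseteq\J(A)$, and unit-lifting along the radical are all valid here since $G$ is abelian. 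What your approach buys is self-containedness and a cleaner conceptual packaging (no appeal to Karpilovsky, and surjectivity and the kernel come for free from one general lemma); what the paper's approach buys is brevity given the cited radical description and, more importantly, an explicitly named map $\theta$ that is reused later (the isomorphism in the cohomology lemma is said to be induced by $\theta$). You should therefore note, if only in a sentence, that your surjection $\bo_K[G]^{\times}\to(\bo_K[G]/\J(\bo_K[G]))^{\times}\cong k[H]^{\times}$ coincides with $\theta$ under the natural identification of the quotient with $k[H]$ (both kill $\pi$ and send each $g\in G_p$ to $1$), so the later arguments go through unchanged.
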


\begin{proof}Let $\pi$ be a uniformising parameter for $\bo_K$. We define a homomorphism of groups
$\theta:\bo_K [G]^{\times}\rightarrow k[H]^{\times}$
as the map generated by sending $g$ to $1$ for all $g\in G_p$ then
reducing modulo $\pi$.

We observe that
$\theta:\bo_K[H]^{\times}\twoheadrightarrow
k[H]^{\times}$ and that $\bo_K[H]^{\times}\subseteq(\bo_K
[G])^{\times}$, and so $\theta$ is surjective.

From \cite{Karpilovsky} Theorem 1, we know that the ideal $\J(\bo_K[G])$ will
be generated by the set $\{\pi,\{g-1\}_{g\in G_p}\}$, therefore we
clearly have $(1+\J(\bo_K[G]))\subseteq \ker\theta$.

Let $\sum_{g\in G_p}\sum_{h\in H}a_{gh}gh$ be an element of
$\ker\theta$, i.e., $\sum_{g\in G_p}\sum_{h\in
H}a_{gh}h=1+a_{\pi}{\pi}$ for some $a_{\pi}\in\bo_k [G]$. We see
that
\[\begin{array}{ll}\displaystyle\sum_{g\in G_p}\sum_{h\in H}a_{gh}gh
&=\displaystyle\sum_{g\in G_p}\sum_{h\in H}a_{gh}(g-1)h+\sum_{g\in G_p}\sum_{h\in H}a_{gh}h\\
&\displaystyle=\sum_{g\in G_p}\sum_{h\in
H}a_{gh}(g-1)h+a_{\pi}{\pi}+1\in(1+\J(\bo_K[G])),
\end{array}\]
and so $\ker\theta\subseteq (1+\J(\bo_K[G]))$.

From \cite{Karpilovsky} Theorem 1, we have that $\J(\mathbb{F}_q[G])$ will
be generated by the set $\{g-1\}_{g\in G_p}$. If we let $K$ be such that $k\cong \mathbb{F}_q$, the exactness of the second sequence now follows from the first by factoring through by $\pi$.
\end{proof}

For any field $F$ we let $J_{F[G]}:F[G]\rightarrow F[G]$ be the $F$-linear group algebra involution defined by
the $F$-linear extension of $J_{F[G]}(g)=g^{-1}$ for $g\in G$. When there is no confusion we shall refer to $J_{F[G]}$ simply as $J$ and for ease of notation we will usually denote $J(a)$ by $\bar{a}$. We now define $\G=\{1,J\}$ as the cyclic group of order $2$
generated by the group algebra involution.

\begin{lemma} \label{cohomology isomorphism lemma}
Using Tate cohomology, the augmentation map on $\bo_K[G]$ (resp. $\mathbb{F}_q[G]$) induces the following isomorphisms:
\[\hat{H}^0(\G,\bo_K[G]^{\times})\cong\hat{H}^0(\G,\bo_K^{\times})\]
\[\hat{H}^0(\G,\mathbb{F}_q[G]^{\times})\cong\hat{H}^0(\G,\mathbb{F}_q^{\times}).\]
\end{lemma}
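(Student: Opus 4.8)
The plan is to exploit the previous lemma together with a dimension-shifting / devissage argument along the exact sequences it provides. The group $\G=\{1,J\}$ has order $2$, which is invertible in both $\bo_K$ and $\mathbb{F}_q$ (since $p$ is odd), so any $\Z[\G]$-module that happens to be a $\Z[1/2]$-module has trivial Tate cohomology. First I would observe that the augmentation map $\varepsilon\colon \bo_K[G]^\times\to\bo_K^\times$ (resp. $\mathbb{F}_q[G]^\times\to\mathbb{F}_q^\times$) is $\G$-equivariant, because $J$ fixes $\bo_K$ pointwise and the augmentation kills the difference $g-g^{-1}$; it is also split by the inclusion $\bo_K^\times\hookrightarrow\bo_K[G]^\times$. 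Hence it suffices to show that the kernel of $\varepsilon$, call it $V_K$ (resp. $V_q$), has trivial Tate cohomology in degree $0$ with respect to $\G$, and then the long exact sequence in Tate cohomology (or simply the splitting) gives the claim.

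Next I would filter $V_K$ using Lemma \ref{exact sequence}. That lemma gives an exact sequence $1\to 1+\J(\bo_K[G])\to \bo_K[G]^\times\to k[H]^\times\to 1$, all maps $\G$-equivariant once one checks $J$ preserves $\J(\bo_K[G])$ (it permutes the generators $g-1\mapsto g^{-1}-1 = -g^{-1}(g-1)$, so it does). Intersecting with $\ker\varepsilon$, I get a $\G$-stable exact sequence relating $1+\J(\bo_K[G])$, $V_K$, and the augmentation kernel of $k[H]^\times$. The point of this reduction is twofold: the group $1+\J(\bo_K[G])$ is a pro-$p$ group, hence uniquely $2$-divisible, so $\hat H^i(\G, 1+\J(\bo_K[G]))=0$ for all $i$ by the standard fact that finite-group cohomology is killed by the group order; and the quotient piece living over $k[H]^\times$ with $p\nmid|H|$ can be handled by the analogous finite-field statement, which one proves the same way. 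So by the long exact Tate sequence, $\hat H^0(\G,V_K)$ is sandwiched between vanishing groups and vanishes.

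For the finite-field case the argument is cleaner: here $\J(\mathbb{F}_q[G])$ is generated by $\{g-1\}_{g\in G_p}$ and $1+\J(\mathbb{F}_q[G])$ is again a (finite) $p$-group, hence uniquely $2$-divisible, so its Tate cohomology vanishes; and $\mathbb{F}_q[H]^\times$ with $p\nmid |H|$ — since $2\mid p-1$ or $p=2$ is excluded — hmm, here one must be slightly careful, but the decomposition $\mathbb{F}_q[H]\cong\prod_j \mathbb{F}_{q^{d_j}}$ into fields (as $p\nmid|H|$ makes $\mathbb{F}_q[H]$ semisimple) reduces the unit group to a product of cyclic groups of order prime to $p$, and on each the augmentation-to-$\mathbb{F}_q^\times$ restriction splits off the relevant factor; what remains is the complementary idempotent components, on which $J$ acts, and one checks directly (or cites Bayer–Lenstra-type arguments) that the Tate cohomology of that complement vanishes. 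The main obstacle I anticipate is precisely this last point: verifying that the "non-trivial-character" part of $\mathbb{F}_q[H]^\times$ (equivalently of $k[H]^\times$) has vanishing $\hat H^0(\G,-)$. The clean way to see it is that $J$ acts on the character-component $\mathbb{F}_{q^{d_j}}^\times$ by an automorphism of order dividing $2$ (inversion composed with a field automorphism), and a cyclic group of odd order has trivial Tate cohomology for any action; the only even part comes from the $2$-power torsion, which lands in $\mathbb{F}_q^\times$ under augmentation precisely when the character is trivial on the $2$-part — so the contributions cancel in $\hat H^0$. Assembling these pieces via the snake lemma / long exact sequences yields both displayed isomorphisms simultaneously.
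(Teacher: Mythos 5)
Your skeleton is essentially the paper's: split off the augmentation, kill $1+\J(\bo_K[G])$ using that it is an abelian pro-$p$ group with $p$ odd (so uniquely $2$-divisible and its Tate cohomology vanishes -- this is a cleaner phrasing of the paper's hand computation), pass via the long exact sequence to the semisimple quotient $k[H]^{\times}$ (resp. $\mathbb{F}_q[H]^{\times}$), and decompose it into field components permuted by $J$. Up to that point the plan is sound, including the routine check that the filtration of the augmentation kernel is $\G$-stable.

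The genuine gap is in the last, essential step: proving that $\hat H^0(\G,-)$ vanishes on the nontrivial-character components. Your argument -- that the odd part is harmless and that ``the only even part comes from the 2-power torsion, which lands in $\mathbb{F}_q^\times$ under augmentation \dots so the contributions cancel'' -- is not a proof, and it is exactly where the content of the lemma sits. Since $q$ is odd, every component $\mathbb{F}_{q^{d_j}}^{\times}$ has nontrivial $2$-primary part, and nothing can ``cancel'' against the trivial-character component: the components are direct factors, so $\hat H^0$ is computed factor by factor. What is actually needed is (i) that $|H|$ is odd, so $\chi=\chi^{-1}$ forces $\chi=1$; hence on every $J$-stable component attached to a nontrivial character, $J$ acts as the nontrivial order-$2$ field (resp. Galois) automorphism -- note it is a ring automorphism, not ``inversion composed with a field automorphism'' -- while the remaining components come in swapped pairs, where vanishing is the trivial norm identity $(u,u)=(1,u)\overline{(1,u)}$; and (ii) surjectivity of the norm map $\mathbb{F}_{q^{d}}^{\times}\rightarrow\mathbb{F}_{q^{d/2}}^{\times}$ for quadratic extensions of finite fields (and, in the integral version, surjectivity of the norm on units for unramified extensions of local fields), which is precisely the statement $\hat H^0=\{1\}$ on such a component. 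Without (i) the lemma is simply false (a component with $\chi^2=1\neq\chi$ would contribute a copy of $\mathbb{F}_q^{\times}/(\mathbb{F}_q^{\times})^2$), and without (ii) you have no vanishing for the $2$-part at all. The paper supplies exactly these two inputs ($H$ of odd order; $k^{\times}/N(l_i^{\times})=\{1\}$ for the unramified components $L_i$, plus the swap argument), and your proposal needs them stated and used in place of the cancellation heuristic.
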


\begin{proof} We only prove the first result as the second follows immediately by letting $K$ be such that $k\cong \mathbb{F}_q$ and factoring through by $\pi$ at every step.

For ease of notation we let $\J(\bo_K[G])=\J$. From the short exact sequence in Lemma \ref{exact sequence} we
obtain the following long exact sequence (see \cite{Brown} VI \S5
for details).
\begin{eqnarray*}\cdots\rightarrow\hat{H}^{0}(\G,(1+\J))\rightarrow&\hat{H}^{0}(\G,\bo_K
[G]^{\times})&
\rightarrow\hat{H}^{0}(\G,k[H]^{\times})\rightarrow\\&\hat{H}^{1}(\G,(1+\J))&\rightarrow\hat{H}^{1}(\G,\bo_K
[G]^{\times})\rightarrow\cdots\end{eqnarray*}

We restrict the norm map associated to $\G$ to the map
$N^{*}:(1+\J)_{\G}\rightarrow (1+\J)^{\G}$, where
$(1+\J)_{\G}=(1+\J)/I_{\G}(1+\J)=(1+\J)/\{\overline{a}a^{-1}:a\in(1+\J)\}.$
Then, by definition, $\hat{H}^0(\G,(1+\J))=\coker(N^{*})$ and
$\hat{H}_0(\G,(1+\J))=\ker(N^{*}).$ See
\cite{Brown} VI \S4 for full details.

We see that $\coker(N^{*})=(1+\J)^{\G}/N(1+\J)$. If
$x\in(1+\J)^{\G}$ then $N(x)=x^2$, and so $((1+\J)^{\G})^2\subseteq
N(1+\J)$. We know that $(1+\J)$ is a pro-$p$ group and $(p,2)=1$, therefore, from \cite{Ribes-Zalesskii} IV Proposition 4.2.1, $\psi^2:a\rightarrow a^2$ for $a\in 1+\J$ is a group automorphism of $1+\J$. As $(1+\J)=(1+\J)^2$, we know $(1+\J)^{\G}=((1+\J)^2)^{\G}$. If we let $(1+j)^2\in((1+\J)^2)^{\G}$ we have $\overline{(1+j)^2}=(1+j)^2$. The group $G$ is abelian so $J$ is a group automorphism on $(1+\J)$, therefore $\overline{(1+j)}^2=\overline{(1+j)^2}\ (=(1+j)^2)$ and as $\psi^2$ is an automorphism of $1+\J$, we have $1+j=\overline{1+j}$. We then have $(1+\J)^{\G}=((1+\J)^{\G})^2$ which implies that $(1+\J)^{\G}\subseteq N(1+\J)$. We
therefore see that $\hat{H}^0(\G,(1+\J))=\{1\}$.

Now let $x\in \ker(N^{*})$, this means $N(x)=\overline{x}x=1$ and
therefore $\overline{x}=x^{-1}$. As above, we can write $x^{-1}=b^2$ for some $b\in(1+\J)$. Therefore
$x=\overline{b}b^{-1}$, so
$x\in\{\overline{a}a^{-1}:a\in(1+\J)\}=I_{\G}(1+\J)$. Therefore, if $x\in \ker(N^{*})$ then
it is represented by $1$ in $(1+\J)_{\G}$ so we must have
$\hat{H}_0(\G,(1+\J))=\{1\}$.

The group $\G$ is a finite cyclic group so from \cite{Brown} III
\S1, we know that $\hat{H}^i(\G,(1+\J))=\hat{H}_{0}(\G,(1+\J))$ for
all odd $i$. Therefore, $\hat{H}^{1}(\G,(1+\J))$ and
$\hat{H}^{0}(\G,(1+\J))$ are trivial and the long exact sequence
above shows us that we have an isomorphism
$\hat{H}^0(\G,(\bo_K[G])^{\times})\cong\hat{H}^0(\G,(k[H])^{\times})$
induced by the map $\theta$. As a result we observe that
the prime-to-$p$ projection map will induce an isomorphism
$\hat{H}^0(\G,(\bo_K[G])^{\times})\cong
\hat{H}^0(\G,(\bo_K[H])^{\times})$.

Recall that the group $H$ is abelian, of odd order and $char(k)\slashed{|}|H|$.
We can decompose $K[H]$ as \[K[H]\cong K\times L_1 \times
L_2\times\cdots\times L_n\times(L'_1\times L'_1)\times(L'_2\times
L'_2)\times\cdots\times(L'_m\times L'_m) \] where the $L_i$ and
$L'_j$ are all unramified extensions of $K$ and the coefficient in
$K$ is obtained by the map $\varepsilon_H:K[H]\rightarrow K$ given by $\varepsilon_H(h)=1$ for $h\in H$. The group algebra
involution, $J$, will be an element of $\Gal(L_i/K)$ and will act on
the $(L'_j\times L'_j)$ by switching the components.

As $|H|$ is a unit in $\bo_K$ we know that $\bo_K[H]$ is the unique maximal
order in $K[H]$, see \cite{Reiner} Chapter 9, \S41 Theorem 41.1.
Therefore,
\[\bo_K[H]\cong \bo_K\times \bo_{L_1} \times
\bo_{L_2}\times\cdots\times \bo_{L_n}\times(\bo_{L'_1}\times
\bo_{L'_1})\times(\bo_{L'_2}\times
\bo_{L'_2})\times\cdots\times(\bo_{L'_m}\times \bo_{L'_m}) .\]

If $(u,v)\in((\bo_{F_i}\times\bo_{F_i})^{\times})^{\G}$ then
$(u,v)=(v,u)$ so $u=v$. We then have
$(u,u)=(1,u)(u,1)=(1,u)\overline{(1,u)}$ so
$\hat{H}^0(\G,(\bo_{L_i}\times\bo_{L_i})^{\times})=\{1\}$.

Standard theory also gives us
$\hat{H}^0(Gal(L_i/K),\bo_{L_i}^{\times})=\bo_K^{\times}/N(\bo_{L_i}^{\times})=k^{\times}/N(l_i^{\times})=\{1\}$, where $l_i$ is the residue field of $L_i$ (see for example, \cite{Serre} V \S2 Prop 3 and XIII \S2). Therefore,
the map $\varepsilon_H:K[H]\rightarrow K$ induces an isomorphism
$\hat{H}^0(\G,(\bo_K[H])^{\times})\cong
\hat{H}^0(\G,\bo_K^{\times})$. Combining this with the isomorphism
$\hat{H}^0(\G,(\bo_K[G])^{\times})\cong
\hat{H}^0(\G,(\bo_K[H])^{\times})$ we see that the augmentation map
on $G$ induces the desired isomorphism.
\end{proof}

Let $F/E$ be a Galois extension of fields and let $a\in F$. We define the resolvend of $a$ as
\[r_{F/E}(a)=\sum_{g\in \Gal{F/E}}g(a)g^{-1},\]
and let $R_{F/E}(a)$ be defined as
\[R_{F/E}(a)=r(a)\overline{r(a)}=\sum_{g\in
\Gal(F/E)}T_{F/E}(a,g(a))g.\]

We now let $L/K$ and $\mathbb{F}_{q^n}/\mathbb{F}_q$ be Galois abelian extensions of fields and let $G=\Gal(L/K)$ and $\tilde{G}=\Gal(\mathbb{F}_{q^n}/\mathbb{F}_q)$.

\begin{lemma}\begin{enumerate}\item Let $x\in A_{L/K}$ be an integral normal basis generator for $A_{L/K}$, i.e., $A_{L/K}=\bo_K[G].x$ (this means that we suppose $L/K$ is at most weakly ramified). Then \[R_{L/K}(x)\in(\bo_K[G])^{\times}.\]\item Let $y\in \mathbb{F}_{q^n}$ be a normal basis generator for $\mathbb{F}_{q^n}$ over $\mathbb{F}_q$, i.e., $\mathbb{F}_{q^n}=\mathbb{F}_{q}[\tilde{G}].y$. Then \[R_{\mathbb{F}_{q^n}/\mathbb{F}_q}(y)\in(\mathbb{F}_q[\tilde{G}])^{\times}.\]\end{enumerate} \label{unit_lemma}\end{lemma}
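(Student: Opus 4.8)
The plan is to handle both parts by one argument; part~(2) is the field-coefficient version and involves no integrality condition, so I will spell out part~(1) and indicate the (easier) changes at the end. The starting point is the \emph{resolvend identity}: for $a,b\in L$,
\[r_{L/K}(a)\,\overline{r_{L/K}(b)}=\sum_{g\in G}T_{L/K}(a,g(b))\,g.\]
This is a one-line computation: expand $r_{L/K}(a)\overline{r_{L/K}(b)}=\sum_{g,h}g(a)h(b)\,g^{-1}h$, put $k=g^{-1}h$, and note that the inner sum becomes $\sum_g g\bigl(a\cdot k(b)\bigr)=Tr_{L/K}\bigl(a\cdot k(b)\bigr)$. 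Taking $a=b=x$ shows that the coefficients of $R_{L/K}(x)$ are the numbers $T_{L/K}(x,g(x))$; since $x\in A_{L/K}$ and $A_{L/K}^D=A_{L/K}$, these all lie in $\bo_K$, so $R_{L/K}(x)\in\bo_K[G]$. The real content of the lemma is that this element is a \emph{unit}, and I would prove this by producing its inverse explicitly.

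For that I would exploit the self-dual lattice structure. As $A_{L/K}=\bo_K[G].x$ and $\dim_K L=|G|$, the map $\bo_K[G]\to A_{L/K}$, $\lambda\mapsto\lambda\cdot x$, is an isomorphism, so $\{g(x)\}_{g\in G}$ is an $\bo_K$-basis of $A_{L/K}$. Let $\{e_g\}_{g\in G}$ be the $\bo_K$-basis of $L$ dual to it with respect to $T_{L/K}$, i.e., $T_{L/K}(g(x),e_h)=\delta_{g,h}$. Then $T_{L/K}(e_h,A_{L/K})\subseteq\bo_K$, so $e_h\in A_{L/K}^D=A_{L/K}$; and $G$-invariance of $T_{L/K}$ forces $\sigma(e_h)=e_{\sigma h}$, so setting $x^{*}:=e_1$ we get $e_h=h(x^{*})$ and $x^{*}\in A_{L/K}=\bo_K[G].x$, hence $x^{*}=\beta\cdot x$ for some $\beta\in\bo_K[G]$. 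Because $G$ is abelian, a similar reindexing gives $r_{L/K}(\beta\cdot x)=\beta\,r_{L/K}(x)$, and $J=J_{K[G]}$ is a ring automorphism of the commutative ring $\bo_K[G]$. The defining relations $T_{L/K}(g(x),h(x^{*}))=\delta_{g,h}$ say, via the resolvend identity applied to the pair $(x,x^{*})$, that $r_{L/K}(x)\,\overline{r_{L/K}(x^{*})}=1$; substituting $r_{L/K}(x^{*})=\beta\,r_{L/K}(x)$ yields
\[1=r_{L/K}(x)\,\overline{\beta}\,\overline{r_{L/K}(x)}=\overline{\beta}\,R_{L/K}(x),\]
so $R_{L/K}(x)^{-1}=\overline{\beta}\in\bo_K[G]$, which proves part~(1).

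For part~(2) I would run the identical argument with $(\bo_K,G,L)$ replaced by $(\mathbb{F}_q,\tilde{G},\mathbb{F}_{q^n})$: the trace form of $\mathbb{F}_{q^n}/\mathbb{F}_q$ is non-degenerate by separability and $\mathbb{F}_{q^n}$ is trivially self-dual over $\mathbb{F}_q$, so the dual basis of a normal basis $\{g(y)\}$ again lies in $\mathbb{F}_{q^n}=\mathbb{F}_q[\tilde{G}].y$ and the same computation gives $R_{\mathbb{F}_{q^n}/\mathbb{F}_q}(y)^{-1}\in\mathbb{F}_q[\tilde{G}]$; here there is nothing integral to check. The only delicate point is therefore in part~(1): the \emph{integrality} of $\beta$. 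It is precisely the hypothesis $A_{L/K}^D=A_{L/K}$ that makes the dual basis of $\{g(x)\}$ lie inside $A_{L/K}$ rather than merely in $L$, and hence forces $\beta$ (and $\overline{\beta}$) to have coefficients in $\bo_K$; without it one would only learn that $R_{L/K}(x)$ is a unit in $K[G]$. (Alternatively one could prove unitality by reducing modulo the Jacobson radical via Lemma~\ref{exact sequence}, but the dual-basis argument has the advantage of naming the inverse.)
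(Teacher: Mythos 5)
Your proof is correct, but it takes a genuinely different route from the paper. The paper proves part (1) by a discriminant-plus-maximal-order argument: under the regular representation, $R_{L/K}(x)$ is represented by the Gram matrix $(T_{L/K}(g(x),h(x)))$, whose determinant generates $[A_{L/K}^D:A_{L/K}]\bo_K=\bo_K$ by self-duality and is therefore a unit; it then passes to a splitting field $F$, observes that each character value $\chi(R_{L/K}(x))$ is integral with unit product, hence a unit in $\bo_F$, so $R_{L/K}(x)$ is invertible in the maximal order $\prod_{\chi}\bo_F$, and finally invokes $\mathfrak{M}^{\times}\cap\bo_K[G]=\bo_K[G]^{\times}$. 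Part (2) is then deduced from part (1) by lifting the generator to the unramified local situation via Nakayama's lemma and reducing mod $\pi$. You instead exhibit the inverse directly: self-duality of $A_{L/K}$ forces the dual basis of $\{g(x)\}$ to lie in $A_{L/K}$, $G$-equivariance of the trace form shows it has the form $\{h(x^{*})\}$ with $x^{*}=\beta\circ x$ and $\beta\in\bo_K[G]$, and the resolvend identity applied to $(x,x^{*})$ together with $r(\beta\circ x)=\beta r(x)$ gives $\overline{\beta}\,R_{L/K}(x)=1$. Both arguments hinge on $A_{L/K}^D=A_{L/K}$ (and on $G$ being abelian, for commutativity of $L[G]$ and multiplicativity of $J$), but yours is more elementary and self-contained: it avoids character theory, splitting fields and the unit-intersection property of maximal orders, it names the inverse $\overline{\beta}$ explicitly (which is in the constructive spirit of the paper), and it handles the finite-field case directly rather than by Nakayama lifting. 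What the paper's route buys is the character-by-character description $\prod_\chi\chi(R_{L/K}(x))$, which fits the decomposition machinery it uses later (e.g., in the $p'$-case), whereas your argument yields only the existence and an expression for the inverse in terms of the dual generator $x^{*}$.
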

\begin{proof} \textit{Part 1.}

By definition, we know that $T_{L/K}:A_{L/K}\times A_{L/K}\longrightarrow\bo_K$, so clearly $R_{L/K}(x)\in\bo_K [G]$.

Let $\rho$ be the regular representation of the group $G$. For $g\in
G$, the matrix $\rho(g)$ will have a $1$ in the position $(h,gh)$
for all $h\in G$ and a $0$ in every other position. As the trace
form is a $G$-invariant symmetric form, and as $G$ is abelian we
know that
\[ T_{L/K}(g(x),x)=T_{L/K}(x,g(x))=T_{L/K}(h(x),hg(x))=T_{L/K}(h(x),gh(x))\] for all $g,h\in G$.
If we use $g\in G$ as a basis for $\bo_K[G]$ we see that the matrix
$\rho(R_{L/K}(x))$, representing $R_{L/K}(x)=\sum_{g\in G}T_{L/K}(g(x),x)g$ must
have the coefficient $T_{L/K}(g(x),x)=T_{L/K}(h(x),gh(x))$ in the
position $(h,gh)$ for all $g,h\in G$. Therefore, after relabeling,
$\rho(R_{L/K}(x))$ is represented by the matrix $(T_{L/K}(g(x),h(x)))$.

 Since $\{g(x):g\in G\}$ is a basis for $A_{L/K}$ over $\bo_K$, we know
that \[\det(T_{ L/K}(g(x),h(x)))\bo_K=[A_{L/K}^D:A_{L/K}]\bo_K\](see
\cite{Frohlich-Taylor} III 2.8), but $A_{L/K}^D=A_{L/K}$, so
$\det(T_{ L/K}(g(x),h(x)))=\det_{\rho}(R_{x})$ must be a unit.

We now let $\hat{G}$ be the group of irreducible characters of $G$
over a fixed algebraic closure of $K$. The regular representation,
$\rho$, is equal to the sum of the irreducible representations so we
know that
\begin{equation}{\rm det}_{\rho}(R_{x})=\prod_{\chi\in\hat{G}}{\rm det}_{\chi}(R_{L/K}(x))=\prod_{\chi\in\hat{G}}\chi(R_{x}).\label{detR_Xeqn}\end{equation}

The group $G$ is finite and abelian so there exists a finite
extension $F/K$, over which the irreducible characters of ${G}$ are
realisable. We then have $F[G]\cong\prod_{\chi\in\hat{G}}F$ and the
isomorphism is given by $\sum_{g\in G}f_gg\mapsto\prod_{\chi\in
\hat{G}} \sum_{g\in G}f_g\chi(g)$. For more details see, for
example, \cite{serrereps} \S6.2.

For each $\chi\in\hat{ G}$ we know that $\chi(R_{x})=\sum_{g\in
G}T_{ L/K}(x,g(x))\chi(g)$, that $\chi(g)\in\bo_F$ (see
\cite{serrereps} 6.5 Prop. $15$) and that $T_{L/K}(x,g(x))$ is
an element of $\bo_K\subseteq\bo_F$. Therefore
$\chi(R_{x})\in\bo_F$, and from Eq. (\ref{detR_Xeqn}) we know
that $\chi(R_{x})$ must be a unit in $\bo_F$.

Under the isomorphism mentioned above $\bo_F[G]\subset
\prod_{\chi\in\hat{G}}\bo_F=\mathfrak{M}$ and $\mathfrak{M}$ is a
maximal order. The element $R_{L/K}(x)$ is represented in $\mathfrak{M}$ as
$\prod_{\chi\in\hat{G}}\chi(R_{L/K}(x))$. We have seen above that
$\chi(R_{L/K}(x))$ is a unit in $\bo_F$ for all $\chi\in\hat{G}$, so $R_{L/K}(x)$
is invertible in $\mathfrak{M}$. As $\bo_F/\bo_K$ is a finite ring
extension we have
$\mathfrak{M}^{\times}\cap\bo_K[G]=\bo_K[G]^{\times}$, see
\cite{Reiner} Ch. 6 \S25. Therefore $R_{L/K}(x)\in\bo_K[G]^{\times}$.

\textit{Part 2.} We now let $K$ be a local field with $k\cong\mathbb{F}_q$. Let $L/K$ be the unique unramified extension of degree $n$. We then have the residue field of $L$, denoted $l$, being isomorphic to $\mathbb{F}_{q^n}$. From \cite{Serre}, IV Prop. 4 we see that $v_{L}(\D_{L/K})=0$ and so $A_{L/K}=\bo_L$. Let $\pi$ be a uniformising parameter of $\bo_K$, then the Jacobson radical of $\bo_K$ is $\pi\bo_K$. By assumption we have $l= k[\tilde{G}].y$ which implies that $\bo_L\cong \bo_K[\tilde{G}].y\mod\pi\bo_L$. Therefore, by Nakayama's lemma (see \cite{Atiyah-MacDonald}, Ch.2 Prop. 2.6), we have $x\in\bo_L$ is an integral normal basis generator for $\bo_L$ if $x\equiv y\mod\pi\bo_L$. Using \textit{part 1.} $R_{L/K}(x)\in(\bo_K[\tilde{G}])^{\times}$ for all $x\equiv y\mod\pi\bo_L$ and therefore $R_{l/k}(y)\in (k[\tilde{G}])^{\times}$.

\

\end{proof}

\begin{notation}For a Galois extension of fields $F/E$ with Galois group $\Gamma$ we define the action $\circ$ of $E[\Gamma]$ on $F$. For $a=\sum_{\gamma\in
\Gamma}a_{\gamma}\gamma\in E[\Gamma]$ and $y\in F$, then \[a\circ y=\sum_{\gamma\in
\Gamma}a_{\gamma}{\gamma}(y).\] We recall that the resolvend of $y$ is
defined as $r_{F/E}(y)=\sum_{\gamma\in \Gamma}{\gamma}(y)\gamma^{-1}$.\end{notation}

 A straightforward
calculation then shows, for $w\in\bo_K[G]$, that $wr(y)=r(w\circ y)$.

\begin{theorem}
 \begin{enumerate}
  \item Let $x\in A_{L/K}$ be an integral normal basis generator for $A_{L/K}$, then $R_{L/K}(x)=u\bar{u}$ for some $u\in\bo_K[G]^{\times}$ and $u^{-1}\circ x$ is a self-dual integral normal basis generator for $A_{L/K}$.
\item Let $y\in \mathbb{F}_{q^n}$ be a normal basis generator for $\mathbb{F}_{q^n}$ over $\mathbb{F}_q$, then $R_{\mathbb{F}_{q^n}/\mathbb{F}_q}(y)=v\bar{v}$ for some $v\in\mathbb{F}_q[\tilde{G}]^{\times}$ and $v^{-1}\circ x$ is a self-dual normal basis generator for $\mathbb{F}_{q^n}$ over $\mathbb{F}_q$.
 \end{enumerate}
\label{construction_theorem}
\end{theorem}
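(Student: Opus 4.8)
The plan is to prove Part 1 in full; Part 2 then follows verbatim, replacing Lemma \ref{unit_lemma}(1), Lemma \ref{exact sequence} and Lemma \ref{cohomology isomorphism lemma} by their finite-field analogues and $\bo_K$ by $\mathbb{F}_q$, $x$ by $y$ (the "$v^{-1}\circ x$" in the statement should read "$v^{-1}\circ y$"). First I would record two elementary identities. Since $G$ is abelian, $J$ is a ring automorphism with $\overline{r(x)\,\overline{r(x)}}=\overline{r(x)}\cdot r(x)=r(x)\,\overline{r(x)}$, so $R_{L/K}(x)$ is fixed by $J$; and the augmentation map $\varepsilon\colon\bo_K[G]\to\bo_K$ satisfies $\varepsilon\circ J=\varepsilon$ and $\varepsilon(r(x))=\sum_{g\in G}g(x)=Tr_{L/K}(x)$, hence $\varepsilon(R_{L/K}(x))=Tr_{L/K}(x)^2$. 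By Lemma \ref{unit_lemma}(1) we have $R_{L/K}(x)\in\bo_K[G]^{\times}$, so the two observations above show that $R_{L/K}(x)$ defines a class in $\hat H^0(\G,\bo_K[G]^{\times})=(\bo_K[G]^{\times})^{\G}/N(\bo_K[G]^{\times})$, where $N(u)=u\bar u$ is the norm for $\G=\{1,J\}$.

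Next I would push this class through the isomorphism of Lemma \ref{cohomology isomorphism lemma}, which is the one induced by $\varepsilon$; by functoriality its image is the class of $\varepsilon(R_{L/K}(x))=Tr_{L/K}(x)^2$ in $\hat H^0(\G,\bo_K^{\times})$. Since $J$ fixes $K$ pointwise, $\G$ acts trivially on $\bo_K^{\times}$, so $\hat H^0(\G,\bo_K^{\times})=\bo_K^{\times}/(\bo_K^{\times})^2$; and as $Tr_{L/K}(x)^2$ is a unit (being $\varepsilon$ of a unit, whence $Tr_{L/K}(x)\in\bo_K^{\times}$ because $\bo_K$ is a domain) its class is that of a square, i.e. trivial. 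Because the map of Lemma \ref{cohomology isomorphism lemma} is an isomorphism, $R_{L/K}(x)$ is trivial in $\hat H^0(\G,\bo_K[G]^{\times})$, which — together with its being $J$-fixed — means precisely that $R_{L/K}(x)=u\bar u$ for some $u\in\bo_K[G]^{\times}$. I expect this to be the only genuinely non-trivial step, since it is where Lemmas \ref{exact sequence}, \ref{cohomology isomorphism lemma} and \ref{unit_lemma} are all brought to bear; everything else is formal manipulation of resolvends.

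Finally I would set $z=u^{-1}\circ x$. As $u\in\bo_K[G]^{\times}$, the map $a\mapsto u^{-1}\circ a$ is a $\bo_K[G]$-module automorphism of $A_{L/K}$ (inverse $a\mapsto u\circ a$, using associativity $(ab)\circ a=a\circ(b\circ a)$), so $\bo_K[G]\circ z=\bo_K[G]\circ x=A_{L/K}$ and $z$ is again an integral normal basis generator. For self-duality I would use the identity $w\,r(y)=r(w\circ y)$: $r(z)=u^{-1}r(x)$, and hence, by commutativity of $\bo_K[G]$ and $\overline{u^{-1}}=\bar u^{-1}$,
\[
R_{L/K}(z)=r(z)\,\overline{r(z)}=u^{-1}\bar u^{-1}\,r(x)\,\overline{r(x)}=u^{-1}\bar u^{-1}\,u\bar u=1 .
\]
Reading off the coefficient of each $g\in G$ in $R_{L/K}(z)=\sum_{g\in G}T_{L/K}(z,g(z))\,g$ gives $T_{L/K}(z,g(z))=\delta_{1,g}$, so $z$ is a self-dual integral normal basis generator for $A_{L/K}$. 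Part 2 is identical, using Lemma \ref{unit_lemma}(2); there $\hat H^0(\G,\mathbb{F}_q^{\times})=\mathbb{F}_q^{\times}/(\mathbb{F}_q^{\times})^2$ may have order $2$, but the relevant class is again that of the square $Tr_{\mathbb{F}_{q^n}/\mathbb{F}_q}(y)^2$, hence trivial, and the rest goes through unchanged.
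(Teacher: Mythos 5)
Your proposal is correct and follows essentially the same route as the paper: invoke Lemma \ref{unit_lemma} to see $R_{L/K}(x)\in\bo_K[G]^{\times}$, use the augmentation-induced isomorphism of Lemma \ref{cohomology isomorphism lemma} together with $\varepsilon(R_{L/K}(x))=Tr_{L/K}(x)^2$ to write $R_{L/K}(x)=u\bar u$, and then compute $R_{L/K}(u^{-1}\circ x)=1$ via $wr(y)=r(w\circ y)$. The only (harmless) divergence is at the end: the paper deduces that $x'=u^{-1}\circ x$ generates by noting $x'\in A_{L/K}$ and citing \cite{Pickett} Lemma 8, whereas you argue directly that multiplication by the unit $u^{-1}$ is a $\bo_K[G]$-module automorphism of $A_{L/K}$, which is an equally valid and slightly more self-contained justification; you also correctly note the typo $v^{-1}\circ x$ for $v^{-1}\circ y$ in part 2.
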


\begin{proof} \textit{Part 1.}
We first note that from Lemma \ref{unit_lemma} we have $R_{L/K}(x)\in\bo_K[G]^{\times}$.
 From Lemma \ref{cohomology isomorphism lemma} we know that the
augmentation map $\varepsilon:g\mapsto 1$ for $g\in G$ gives an
isomorphism
$\hat{H}^0(\G,(\bo_K[G])^{\times})\cong\hat{H}^0(\G,\bo_K^{\times})$.
We know that $\bo_K^{\times}$ is invariant under $\G$, so
$\hat{H}^0(\G,\bo_K^{\times})=\bo_K^{\times}/(\bo_K^{\times})^2$. A
straightforward calculation, left as an exercise, gives
$\varepsilon(R_{L/K}(x))=Tr_{ L/K}(x)^2$. Therefore $R_{L/K}(x)$ is represented by
$1$ in $\hat{H}^0(\G,(\bo_K[G])^{\times})$, and so we can now write
$R_{L/K}(x)=u\bar{u}$ for some fixed $u\in(\bo_K[G])^{\times}$.

We have $R_{L/K}(x))=r_{L/K}(x)\overline{r_{L/K}(x)}=u\bar{u}$ and $L[G]$ is a
commutative ring, so $u^{-1}r_{L/K}(x)\overline{u^{-1}r_{L/K}(x)}=1$. Therefore,
$r_{L/K}(u^{-1}\circ x)\overline{r_{L/K}(u^{-1}\circ x)}=1$. We now let
$x'=u^{-1}\circ x$ and note that $R_{L/K}(x')=\sum_{g\in
G}T_{L/K}(x',g(x'))g=1$. This means that
$T_{L/K}(x',g(x'))=\delta_{1,g}$ for $g\in G$ and, as the trace form
is $G$-invariant, $T_{L/K}(g(x'),h(x'))=\delta_{g,h}$ for $g,h\in
G$.

Finally, as $A_{L/K}=\bo_K[G].x$, clearly $x'\in A_{L/K}$.
Therefore, applying \cite{Pickett} Lemma 8, we see that $x'$ is
an integral self-dual normal basis element for $A_{L/K}$.

 \textit{Part 2.}  We first note that that any element $y'\in\mathbb{F}_{q^n}$ with the property $T_{\mathbb{F}_{q^n}/\mathbb{F}_q}(y',\tilde{g}(y'))=\delta_{1,\tilde{g}}$ for $\tilde{g}\in\tilde{G}$ is necessarily a normal basis generator for $\mathbb{F}_{q^n}$ over $\mathbb{F}_q$. The result then follows from exactly the same arguments as \textit{part 1.} but using the corresponding statements for finite fields in Lemmas \ref{cohomology isomorphism lemma} and \ref{unit_lemma}.

\end{proof}

We remark that Theorem \ref{construction_theorem} gives an alternative and constructive proof of the existence of a self-dual integral normal basis for both $A_{L/K}$ with $L/K$ odd, weakly ramified and abelian; and for $\mathbb{F}_{q^n}$ over $\mathbb{F}_q$ for $q$ and $n$ odd.

The method we shall use for the constructions in this paper (with the exception of the totally, tamely ramified case for local fields) will be to describe a normal basis generator and then calculate the units $u$ and $v$ in Theorem \ref{construction_theorem}.

\begin{notation}
Let $R$ be a commutative ring with identity and let $r\in R^2$, i.e. $r$ is the square of some element in $R$. Throughout we shall use $\sqrt{r}$ to denote some element in $R$ such that $\left(\sqrt{r}\right)^2=r$. We note that $\sqrt{r}$ need not be unique, but this does not limit the constructions described in the sequel.\label{sqrt_notation}
\end{notation}

\section{Finite Fields}\label{finite}

In this section we describe how to construct self-dual normal bases for any finite extension of a finite field for which they exist.  We first prove a lemma which will be very useful in the sequel. We do not restrict ourselves to finite fields for this result.

\begin{lemma}Let $F_1/E$ and $F_2/E$ be abelian Galois extensions with $F_1\cap F_2=E$ and $F=F_1F_2$. For $i=1,2$ let $\Gamma_i=\Gal(F_i/E)$ and let $x_i\in F_i$ be a self-dual element for $F_i/E$ in the sense that $T_{F/E}({g_i}(x_i),{h_i}(x_i))=\delta_{g_i,h_i}$ for $g_i,h_i\in \Gamma_i$. Then $x_1x_2$ is a self-dual element for $F/E$.\label{mult_lemma}\end{lemma}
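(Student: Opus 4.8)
The plan is to exploit the tensor-product decomposition of the group algebra and the multiplicativity of resolvends. Since $F_1 \cap F_2 = E$ and $F = F_1 F_2$, we have $\Gamma = \Gal(F/E) \cong \Gamma_1 \times \Gamma_2$, with each $\gamma = (g_1, g_2) \in \Gamma$ acting on $F$ via the commuting actions of $g_1$ on $F_1$ and $g_2$ on $F_2$. First I would record the key compatibility: for $g = (g_1, g_2)$ acting on a product $x_1 x_2$ with $x_i \in F_i$, one has $g(x_1 x_2) = g_1(x_1) \, g_2(x_2)$, because $g$ restricted to $F_i$ is $g_i$. Consequently the resolvend factors: $r_{F/E}(x_1 x_2) = \sum_{(g_1,g_2)} g_1(x_1) g_2(x_2) (g_1,g_2)^{-1}$, which under the identification $E[\Gamma] \cong E[\Gamma_1] \otimes_E E[\Gamma_2]$ equals $r_{F_1/E}(x_1) \otimes r_{F_2/E}(x_2)$ (each $\gamma^{-1} = g_1^{-1} g_2^{-1}$ splits accordingly).

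Next I would compute $R_{F/E}(x_1 x_2) = r_{F/E}(x_1x_2) \cdot \overline{r_{F/E}(x_1x_2)}$. Because the involution $J$ on $E[\Gamma]$ respects the tensor decomposition (it sends $(g_1,g_2) \mapsto (g_1^{-1}, g_2^{-1})$, i.e. it is $J_{E[\Gamma_1]} \otimes J_{E[\Gamma_2]}$), we get
\[
R_{F/E}(x_1 x_2) = \bigl(r_{F_1/E}(x_1)\overline{r_{F_1/E}(x_1)}\bigr) \otimes \bigl(r_{F_2/E}(x_2)\overline{r_{F_2/E}(x_2)}\bigr) = R_{F_1/E}(x_1) \otimes R_{F_2/E}(x_2).
\]
By hypothesis each $x_i$ is self-dual for $F_i/E$, which by the argument in the proof of Theorem \ref{construction_theorem} means exactly $R_{F_i/E}(x_i) = \sum_{g_i} T_{F_i/E}(x_i, g_i(x_i)) g_i = 1$ in $E[\Gamma_i]$. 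Hence $R_{F/E}(x_1 x_2) = 1 \otimes 1 = 1$ in $E[\Gamma]$, i.e. $\sum_{g \in \Gamma} T_{F/E}(x_1x_2, g(x_1x_2)) g = 1$, so $T_{F/E}(x_1x_2, g(x_1x_2)) = \delta_{1,g}$, and by $\Gamma$-invariance of the trace form $T_{F/E}(g(x_1x_2), h(x_1x_2)) = \delta_{g,h}$. This is the claimed self-duality (and, as noted in the paper, automatically makes $x_1 x_2$ a normal basis generator).

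The one point requiring genuine care — and the main potential obstacle — is matching the trace forms correctly across the three extensions: the statement writes $T_{F/E}$ throughout, but $x_i \in F_i$, so I must check that $T_{F/E}(x_i, g_i(x_i)) = [F:F_i]\, T_{F_i/E}(x_i, g_i(x_i))$ or, more relevantly, that $T_{F/E}(x_1 x_2, g_1 g_2(x_1 x_2))$ genuinely splits as a product $T_{F_1/E}(x_1, g_1(x_1)) \cdot T_{F_2/E}(x_2, g_2(x_2))$. The clean way to see this is exactly the resolvend identity above: $R_{F/E}$ is defined purely in terms of $r_{F/E}$ and $J$, the tensor factorization of $r_{F/E}(x_1x_2)$ is a formal consequence of $g(x_1x_2) = g_1(x_1)g_2(x_2)$, and therefore the splitting of the trace pairings is forced without ever computing a trace directly — one reads off $T_{F/E}(x_1x_2, g(x_1x_2))$ as the $g$-coefficient of the tensor product. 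So the whole argument reduces to the bookkeeping of the isomorphism $E[\Gamma] \cong E[\Gamma_1] \otimes_E E[\Gamma_2]$ and checking it intertwines $J$, $r$, and the $\circ$-action; I expect no serious difficulty beyond being careful that the linear-disjointness hypothesis $F_1 \cap F_2 = E$ is what guarantees $\Gamma \cong \Gamma_1 \times \Gamma_2$ and that $\{g_1(x_1)h_2(x_2)\}$ spans $F$ over $E$ when needed.
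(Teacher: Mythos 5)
Your proof is correct, and it reaches the conclusion by a slightly different route from the paper's. The paper's own proof is a direct computation: since $\Gal(F/E)=\Gamma_1\times\Gamma_2$ and $(g_1,g_2)(x_1x_2)=g_1(x_1)g_2(x_2)$, the trace over $F/E$ of a product of an element of $F_1$ with an element of $F_2$ factors, giving $T_{F/E}(x_1x_2,(g_1,g_2)(x_1x_2))=T_{F_1/E}(x_1,g_1(x_1))\,T_{F_2/E}(x_2,g_2(x_2))=\delta_{1,g_1}\delta_{1,g_2}$. You encode the same factorization in the resolvend formalism, $r_{F/E}(x_1x_2)=r_{F_1/E}(x_1)\,r_{F_2/E}(x_2)$, hence $R_{F/E}(x_1x_2)=R_{F_1/E}(x_1)R_{F_2/E}(x_2)=1$; this is equivalent content (reading off the $g$-coefficient of $R_{F/E}$ is exactly the trace computation the paper performs), but it has the virtue of expressing self-duality uniformly as ``$R=1$'', in line with the machinery of Theorem \ref{construction_theorem}. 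Two points of bookkeeping: the resolvends $r_{F_i/E}(x_i)$ lie in $F_i[\Gamma_i]$, not in $E[\Gamma_i]$, so the factorization should be read as a product in $F[\Gamma]$ (or as a tensor over $E$ after identifying $F\cong F_1\otimes_E F_2$, which is where the hypothesis that $F_1\cap F_2=E$ for Galois extensions enters); and, exactly as in the paper's own proof, you interpret the self-duality hypothesis with respect to $T_{F_i/E}$ rather than the literal $T_{F/E}$ appearing in the statement (the literal reading would introduce factors $[F:F_i]$), which is clearly the intended meaning.
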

\begin{proof}We first observe that, as the trace form is $\Gamma_i$-equivarient, we have $T_{F/E}({g_i}(x_i),{h_i}(x_i))=\delta_{g_i,h_i}$ if and only if $T_{F/E}(x_i,{g_i}(x_i))=\delta_{1,g_i}$.  With a slight abuse of notation we have $\Gal(F/E)=\Gamma_1\times \Gamma_2$ with $F_i$ being the fixed field of $\Gamma_j$ when $i\neq j$. Therefore, with $g_i\in \Gamma_i$ we have \[\begin{array}{ll}T_{F/E}(x_1x_2,{(g_1,g_2)}(x_1x_2))&=T_{F_1/E}(x_1,{g_1}(x_1))T_{F_2/E}(x_2,{g_2}(x_2))\\&=\left\{\begin{array}{ll}1&\text{if\ }g=h=1\\0& \text{otherwise.}\end{array}\right.\end{array}\]\end{proof}


Let $p$ be a prime and $q$ be some power of $p$. Let $\mathbb{F}_q$ be the finite field of $q$ elements and consider a finite extension of $\mathbb{F}_q$ of degree $m$ with Galois group $G$. This extension is uniquely defined, up to isomorphism, by $m$ and is denoted $\mathbb{F}_{q^m}/\mathbb{F}_q$ with $G=\Gal(\mathbb{F}_{q^m}/\mathbb{F}_{q})\cong C_m$ where $C_m$ is the cyclic group of $m$ elements. As discussed in the introduction, if $p\neq 2$, then $\mathbb{F}_{q^m}$ admits a self-dual normal basis if and only if $m$ is odd. If  $p=2$, then $\mathbb{F}_{q^m}$ admits a self-dual normal basis if and only if the exponent of $G$ is not divisible by $4$.
 
 We have $m=p^n.d$ for some $n,d\in\mathbb{Z}^+$ and $\mathbb{F}_{q^m}=\mathbb{F}_{q^d}\mathbb{F}_{q^{p^n}}$ with $\mathbb{F}_{q^d}\cap\mathbb{F}_{q^{p^n}}=\mathbb{F}_q$. Any self-dual element in a field extension is necessarily a self-dual normal basis generator. Therefore, from Lemma \ref{mult_lemma} if we have a self-dual normal basis generator for $\mathbb{F}_{q^d}/\mathbb{F}_q$ and for $\mathbb{F}_{q^{p^n}}/\mathbb{F}_q$ then the product will be a self-dual normal basis generator for $\mathbb{F}_{q^m}/\mathbb{F}_q$. We are thus reduced to finding self-dual normal bases when $m$ is either a power of $p$ or when $m$ is coprime to $p$.

\subsection{$p$-extensions of finite fields}\label{p_finite_section}

We now consider the extension $\mathbb{F}_{q^{p^n}}/\mathbb{F}_q$
of degree $p^n$. In \cite{Semaev} \S4 Semaev explicitly constructs a
normal basis generator for $\mathbb{F}_{q^{p^n}}$ over
$\mathbb{F}_q$. We include an outline here for completeness; for
full details see $\cite{Semaev}$.

The construction makes use of the following lemma.
\begin{lemma}
 An element $\eta\in\mathbb{F}_{q^{p^n}}$ is a normal basis generator over $\mathbb{F}_q$ if and only if
 $Tr_{\mathbb{F}_{q^{p^n}}/\mathbb{F}_q}(\eta)\neq 0$.\label{unram_p_lemma}
\end{lemma}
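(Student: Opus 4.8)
The plan is to prove that $\eta \in \mathbb{F}_{q^{p^n}}$ is a normal basis generator over $\mathbb{F}_q$ if and only if $Tr_{\mathbb{F}_{q^{p^n}}/\mathbb{F}_q}(\eta) \neq 0$. The key structural input is that $G = \Gal(\mathbb{F}_{q^{p^n}}/\mathbb{F}_q)$ is cyclic of order $p^n$, so $\mathbb{F}_q[G] \cong \mathbb{F}_q[t]/(t^{p^n}-1)$. Since we are in characteristic $p$, we have $t^{p^n} - 1 = (t-1)^{p^n}$, so $\mathbb{F}_q[G]$ is a local ring with maximal ideal $\mathfrak{m} = (t-1) = (\sigma - 1)$, where $\sigma$ is a chosen generator of $G$ (this is exactly the Jacobson radical description from Lemma \ref{exact sequence} applied with $H$ trivial).

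First I would recall that $\eta$ is a normal basis generator precisely when the $\mathbb{F}_q[G]$-module map $\mathbb{F}_q[G] \to \mathbb{F}_{q^{p^n}}$ sending $1 \mapsto \eta$ is an isomorphism. Both sides are $\mathbb{F}_q$-vector spaces of dimension $p^n$, so this map is an isomorphism if and only if it is surjective, i.e. if and only if $\eta$ generates $\mathbb{F}_{q^{p^n}}$ as an $\mathbb{F}_q[G]$-module. Since $\mathbb{F}_q[G]$ is local with maximal ideal $\mathfrak{m}$, Nakayama's lemma tells us that $\eta$ generates if and only if its image generates $\mathbb{F}_{q^{p^n}}/\mathfrak{m}\cdot\mathbb{F}_{q^{p^n}}$ as a module over the residue field $\mathbb{F}_q[G]/\mathfrak{m} \cong \mathbb{F}_q$. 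Now $\mathfrak{m} \cdot \mathbb{F}_{q^{p^n}} = (\sigma - 1)\mathbb{F}_{q^{p^n}}$, which is the image of the $\mathbb{F}_q$-linear operator $\sigma - 1$ on $\mathbb{F}_{q^{p^n}}$; its kernel is the fixed field $\mathbb{F}_q$, so the image has $\mathbb{F}_q$-codimension $1$, and the quotient $\mathbb{F}_{q^{p^n}}/(\sigma-1)\mathbb{F}_{q^{p^n}}$ is one-dimensional over $\mathbb{F}_q$.

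The final step is to identify this quotient map concretely. The trace $Tr = \sum_{i=0}^{p^n-1}\sigma^i = N_G$ (the norm element of $\mathbb{F}_q[G]$) satisfies $(\sigma-1)\cdot N_G = 0$, so $Tr$ vanishes on $(\sigma-1)\mathbb{F}_{q^{p^n}}$ and hence factors through the quotient; since $Tr: \mathbb{F}_{q^{p^n}} \to \mathbb{F}_q$ is surjective (separability) and the quotient is one-dimensional, $Tr$ induces an isomorphism $\mathbb{F}_{q^{p^n}}/(\sigma-1)\mathbb{F}_{q^{p^n}} \xrightarrow{\sim} \mathbb{F}_q$. Therefore the image of $\eta$ in the quotient is nonzero if and only if $Tr(\eta) \neq 0$, which by the Nakayama argument above is equivalent to $\eta$ being a normal basis generator. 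I do not anticipate a serious obstacle here; the only point requiring a little care is making the Nakayama reduction rigorous — one must note that $\mathbb{F}_{q^{p^n}}$ is a finitely generated $\mathbb{F}_q[G]$-module over the local ring $\mathbb{F}_q[G]$, so that Nakayama applies, and that a single element generates a module over a local ring iff it generates modulo the maximal ideal.
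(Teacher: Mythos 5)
Your proof is correct. Note, however, that the paper gives no argument of its own for this lemma: its ``proof'' is simply a citation to Semaev, \S 4, Lemma 4.1. Your write-up therefore supplies a genuinely self-contained alternative, and it is the natural structural one: identify $\mathbb{F}_q[G]\cong\mathbb{F}_q[t]/(t^{p^n}-1)=\mathbb{F}_q[t]/(t-1)^{p^n}$ as a local ring with maximal ideal $(\sigma-1)$, reduce ``$\eta$ generates'' to ``$\eta$ generates modulo $(\sigma-1)$'' by Nakayama, observe that $\mathbb{F}_{q^{p^n}}/(\sigma-1)\mathbb{F}_{q^{p^n}}$ is one-dimensional because $\ker(\sigma-1)$ is the fixed field $\mathbb{F}_q$, and identify this quotient with $\mathbb{F}_q$ via the trace, which kills $(\sigma-1)\mathbb{F}_{q^{p^n}}$ since $(\sigma-1)N_G=0$ and is surjective by separability. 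This meshes well with the rest of the paper, which already uses the same Jacobson-radical description of $\mathbb{F}_q[G]$ (Lemma \ref{exact sequence}) and Nakayama-type reductions in the local-field sections; what the citation to Semaev buys the paper is brevity and consistency with the source of the explicit construction of $\eta$, while your argument buys self-containedness. One small simplification: the caution about finite generation in the Nakayama step is not even needed, since the maximal ideal $(\sigma-1)$ is nilpotent in $\mathbb{F}_q[G]$, so generation modulo the maximal ideal trivially lifts; alternatively your dimension count (surjective $\Leftrightarrow$ bijective between spaces of equal finite dimension) already closes the argument.
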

\begin{proof}
 See \cite{Semaev} \S4 Lemma 4.1
\end{proof}

Assume that $\eta_0$ is an element of $\mathbb{F}_q$ such that
$Tr_{\mathbb{F}_{q}/\mathbb{F}_p}(\eta_0)\neq 0$. For $i\geq 1$ we
define $f_i(X)\in\mathbb{F}_{q^{p^{i-1}}}[X]$ and
$\eta_i\in\mathbb{F}_{q^{p^i}}$ inductively as follows.
\[f_i(X)=X^p-\eta_{i-1}X^{p-1}+\eta_{i-1}^{2p-1}\] and
$\eta_i$ is an arbitrary root of $f_i(X)$. From \cite{Semaev}
\S4 each polynomial $f_i(X)$
is irreducible over $\mathbb{F}_{q^{p^{i-1}}}$ and
$Tr_{\mathbb{F}_{q^{p^i}}/\mathbb{F}_q}(\eta_i)\neq 0$. Therefore,
from Lemma \ref{unram_p_lemma}, $\eta_n$ will generate a normal
basis for $\mathbb{F}_{q^{p^n}}$ over $\mathbb{F}_q$.

For any polynomial $f(X)=\sum_{i}a_iX^i$ we let
$f(X)^{q^{j}}=\sum_{i}a_i^{q^j}X^i$, then the minimal
polynomial of $\eta=\eta_n$ over $\mathbb{F}_q$ is
\[\prod_{j=0}^{p^{n-1}-1}f_n(X)^{q^{j}}\](see \cite{Semaev}
\S4 for full details).

To construct $\eta_0$ we define the finite field $F'$ such that
$\mathbb{F}_p\subseteq F'\subseteq\mathbb{F}_q$, $[\mathbb{F}_q:F']$
 is prime to $p$ and $[F':\mathbb{F}_p]$ is a power of $p$. As above,
  we then construct $\eta'\in F'$ such that $\eta'$ is a normal basis
   generator for $F'$ over $\mathbb{F}_p$. We can then take $\eta_0=\eta'\in\mathbb{F}_q$ as $[\mathbb{F}_q:F']$ is prime to $p$.

\begin{theorem}Let $p$ be an odd prime and $n$ be any integer. Let $\eta$ be a normal basis generator for $\mathbb{F}_{q^{p^n}}$ over $\mathbb{F}_q$ constructed as above and let $C_{p^n}\cong G=\Gal(\mathbb{F}_{q^{p^n}}/\mathbb{F}_q)$. Then $R_{\mathbb{F}_{q^{p^n}}/\mathbb{F}_q}(\eta)$ is a square in $\mathbb{F}_q[G]^{\times}$ and using Notation \ref{sqrt_notation},
then \[\frac{1}{\sqrt{R_{\mathbb{F}_{q^{p^n}}/\mathbb{F}_q}(\eta)}}\circ
\eta\] is a self-dual normal basis generator for
$\mathbb{F}_{q^{p^n}}$ over $\mathbb{F}_q$. \label{finite_p_theorem}
\end{theorem}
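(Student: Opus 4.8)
The plan is to show $R=R_{\mathbb{F}_{q^{p^n}}/\mathbb{F}_q}(\eta)$ is a square by splitting off its augmentation and reducing to the unipotent part. First I would recall from Lemma \ref{unit_lemma} that $R\in\mathbb{F}_q[G]^{\times}$, and note $G=G_p$ here since $|G|=p^n$, so $H=\{1\}$ and the Jacobson radical $\J=\J(\mathbb{F}_q[G])$ is the augmentation ideal. The augmentation gives a splitting $\mathbb{F}_q[G]^{\times}\cong \mathbb{F}_q^{\times}\times(1+\J)$ (this is the second sequence of Lemma \ref{exact sequence} with $H$ trivial, which is split because $\mathbb{F}_q^{\times}\hookrightarrow\mathbb{F}_q[G]^{\times}$). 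So writing $R=\varepsilon(R)\cdot R_0$ with $R_0\in 1+\J$, it suffices to check each factor is a square. For the $1+\J$ factor: $1+\J$ is a finite $p$-group and $p$ is odd, so squaring is an automorphism of $1+\J$ (as used in the proof of Lemma \ref{cohomology isomorphism lemma}, via \cite{Ribes-Zalesskii}), hence $R_0$ is automatically a square in $1+\J$.

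For the augmentation factor, the exercise computation $\varepsilon(R)=Tr_{\mathbb{F}_{q^{p^n}}/\mathbb{F}_q}(\eta)^2$ already used in the proof of Theorem \ref{construction_theorem} shows $\varepsilon(R)$ is literally a square in $\mathbb{F}_q^{\times}$, and it is nonzero precisely because $\eta$ is a normal basis generator (Lemma \ref{unram_p_lemma}). Combining the two factors, $R$ is a square in $\mathbb{F}_q[G]^{\times}$; pick $\sqrt{R}\in\mathbb{F}_q[G]^{\times}$ as in Notation \ref{sqrt_notation}. Then $\sqrt{R}$ is a unit with $(\sqrt{R})\,\overline{(\sqrt{R})}$ not obviously equal to $R$ — so the final step is to invoke Theorem \ref{construction_theorem}: we need $R=v\bar v$ with $v=\sqrt{R}$, i.e.\ we need $\overline{\sqrt{R}}=\sqrt{R}$. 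This holds because $J$ fixes $\varepsilon(R)\in\mathbb{F}_q$ and, on $1+\J$, $J$ commutes with the squaring automorphism (here I would note $R_0$ is $J$-fixed: indeed $\overline{R}=\overline{r(\eta)\overline{r(\eta)}}=r(\eta)\overline{r(\eta)}=R$ since $\mathbb{F}_q[G]$ is commutative, hence $\overline{R_0}=R_0$, and then $\overline{\sqrt{R_0}}$ is another square root of $R_0$, which is unique in the $p$-group $1+\J$, so $\overline{\sqrt{R_0}}=\sqrt{R_0}$). Thus $\sqrt{R}=v$ is a legitimate choice of the unit in Theorem \ref{construction_theorem}, and $v^{-1}\circ\eta = \frac{1}{\sqrt{R}}\circ\eta$ is a self-dual normal basis generator.

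The main obstacle is the self-duality of the chosen square root, i.e.\ verifying $\overline{\sqrt{R}}=\sqrt{R}$ so that $R=\sqrt{R}\cdot\overline{\sqrt{R}}$ genuinely has the form $v\bar v$ required to apply Theorem \ref{construction_theorem}. This is where one must use that $R$ is $J$-invariant (immediate from commutativity of the group algebra and the definition of $R$ as $r(\eta)\overline{r(\eta)}$) together with uniqueness of square roots in the pro-$p$, here finite $p$, group $1+\J$ when $p\neq 2$. Everything else — that $\varepsilon(R)$ is a nonzero square, that the unipotent part is a square — is routine given the earlier lemmas. It is worth remarking that this argument is exactly why the hypothesis $p$ odd is needed: for $p=2$ neither the splitting-off of squares in $1+\J$ nor the uniqueness of square roots survives.
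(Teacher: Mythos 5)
Your proposal is correct and follows essentially the same route as the paper: split $\mathbb{F}_q[G]^{\times}\cong\mathbb{F}_q^{\times}\times(1+\J)$ via the augmentation, use $\varepsilon(R)=Tr(\eta)^2$ and the fact that squaring is an automorphism of the odd-order $p$-group $1+\J$ to get a square root, check it is $J$-fixed, and conclude via Theorem \ref{construction_theorem}. The only (harmless) difference is that you get the $J$-invariance of $R$ directly from $R=r(\eta)\overline{r(\eta)}$ and commutativity, whereas the paper routes the same fact through the factorisation $R=v\bar v$ before applying injectivity of squaring on $1+\J$.
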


\begin{proof}
From Lemma \ref{exact sequence} we have the exact sequence \begin{equation*}1\rightarrow1+\J(\mathbb{F}_q[G])\rightarrow\mathbb{F}_q[G]^{\times}\stackrel{\varepsilon}\rightarrow
\mathbb{F}_q^{\times}\rightarrow 1.\end{equation*} where $\varepsilon$ is the augmentation map and $\J(\mathbb{F}_q[G])$ is the Jacobson radical of $\mathbb{F}_q[G]$. We observe that $\mathbb{F}_q^{\times}\subseteq\mathbb{F}_q[G]^{\times}$ so there exists a natural embedding, $j:\mathbb{F}_q^{\times}\rightarrow\mathbb{F}_q[G]^{\times}$ which has the property $\varepsilon 
j=1_{\mathbb{F}_q^{\times}}$. Therefore the short exact sequence is split and \[\mathbb{F}_q[G]^{\times}=\mathbb{F}_q^{\times}\times1+\J(\mathbb{F}_q[G]).\]
A straightforward calculation, left as an exercise, shows that $\varepsilon(R_{\mathbb{F}_{q^{p^n}}/\mathbb{F}_q}(\eta))=Tr_{\mathbb{F}_{q^{p^n}}/\mathbb{F}_q}(\eta)^2$, and so $\varepsilon(R_{\mathbb{F}_{q^{p^n}}/\mathbb{F}_q}(\eta))$ is a square in $\mathbb{F}_q^{\times}$. The group $1+\J(\mathbb{F}_q[G])$ is a $p$-group with
$(p,2)=1$, so every element of $1+\J(\mathbb{F}_q[G])$ will be a square. Therefore,
we can write $R_{\mathbb{F}_{q^{p^n}}/\mathbb{F}_q}(\eta)=w^2$ for some $w\in\mathbb{F}_q [G]^{\times}$.

From Theorem \ref{construction_theorem} we know that $R_{\mathbb{F}_{q^{p^n}}/\mathbb{F}_q}(\eta)=v\bar{v}$ for some $v\in\mathbb{F}_q[G]^{\times}$.
We then have $w^2=v\overline{v}$ and $\overline{w}^2=\overline{w^2}=\overline{v\overline{v}}=v\overline{v}$, therefore $w^2=\overline{w}^2$. Since $w\in\mathbb{F}_q[G]^{\times}$ we can write $w=(w_1,w_2)$ and $w^2=(w_1^2,w_2^2)$ with
$w_1\in \mathbb{F}_q^{\times}$ and $w_2\in1+\J(\mathbb{F}_q[G])$. We then see that
$\overline{w}=(w_1,\overline{w}_2)$ and $\overline{w}^2=(w_1^2,\overline{w}_2^2)=w^2$. The group $1+\J(\mathbb{F}_q[G])$ is a
$p$-group with $(p,2)=1$ so $\psi_2:a\mapsto a^2$ is an
automorphism of $1+\J(\mathbb{F}_q[G])$. Therefore, if $w_2^2=\overline{w}_2^2$
then $w_2=\overline{w}_2$, and so $w=\overline{w}$. If we then write $w=\sqrt{R_{\mathbb{F}_{q^{p^n}}/\mathbb{F}_q}(\eta)}$ our result follows from Theorem \ref{construction_theorem}.
\end{proof}

\begin{remark} If $\mathbb{F}_q$ is of characteristic $2$, then, from \cite{Bayer-Lenstra} Theorem 6.1, we know that a self-dual normal basis exists for $\mathbb{F}_{q^{p^n}}$ over $\mathbb{F}_q$ if and only if the exponent of $G$ is not divisible by $4$. In the setting of this section that would mean $n=1$. If we let $\xi\in\mathbb{F}_{q^2}$ be such that $Tr_{\mathbb{F}_{q^2}/\mathbb{F}_q}(\xi)=1$ then $Tr(\xi,\xi)=Tr(\xi^2)=Tr(\xi)^2$ and $Tr(\xi,g(\xi))=2N(\xi)=0$ for $g\in\Gal(\mathbb{F}_{q^2}/\mathbb{F}_q)$. Therefore $\xi$ will be a self-dual normal basis generator. \label{remark_R'}
\end{remark}

\subsection{$p'$-extensions of finite fields}

We now consider the extension $\mathbb{F}_{q^d}/\mathbb{F}_q$ with Galois group $G\cong C_d$ where $d=r^i$ is a power of some odd prime $r\neq p$. We begin, after Semaev \cite{Semaev}, by describing how to decompose $\mathbb{F}_{q^d}$ into sub-$\mathbb{F}_q$-vector spaces that are stable under the action of $G$. This decomposition is directly related to the standard decomposition of the group algebra $\mathbb{F}_q[G]$, which we shall use in the latter part of this section. However, since the first aim is to give an expression for a normal basis generator in terms of roots of a polynomial irreducible over $\mathbb{F}_q$, we first need to describe the explicit decomposition of $\mathbb{F}_{q^d}$.

In fact we first need to pass to a slightly bigger field to get the relations we need between $q$ and $r$. 
We let $v$ be the order of $q$ modulo $r$ and let $q_1=q^v$. We remark that $(v,d)=1$ and that if $q\equiv 1\mod r$, then $q=q_1$.

We observe that $\Gal(\mathbb{F}_{q_1^d}/\mathbb{F}_{q_1})$ is generated by the Frobenius element, $\phi_{q_1}$ which acts as $\phi_{q_1}:x\mapsto x^{q_1}$ for $x\in\mathbb{F}_{q_1^d}$. With this in mind we define the equivalence relation, $\sim$, on the set $\{0,1,\ldots,d-1\}$ as
\[s_1\sim s_2 \text{\ \ if and only if\ \ } s_1\equiv s_2 q_1^n\mod d\] for some $n\in\mathbb{Z}$.
 We then let $S_{q_1}$ be a set of representatives for the equivalence classes of $\{0,1,\ldots,d-1\}$ under $\sim$.

Let $\zeta$ be a primitive $(q_1-1)$th root of unity in
$\mathbb{F}_{q_1}$, then the order of $\zeta$ in $\mathbb{F}_{q_1}^{\times}$ is $q_1-1$ and $X^d-\zeta$ is irreducible over $\mathbb{F}_{q_1}$ (see \cite{Semaev} \S2). Let $\theta\in\mathbb{F}_{q_1^d}$ be a root of $X^d-\zeta$, we then have $\mathbb{F}_{q_1^d}=\mathbb{F}_{q_1}(\theta)$ and we will now describe a normal basis generator of $\mathbb{F}_{q_1^d}/\mathbb{F}_{q_1}$ in terms of $\theta$. Let $V_s$ be the sub-vector space of $\mathbb{F}_{q_1^d}$ spanned over $\mathbb{F}_{q_1}$ by the elements of the set  $\{\theta^t:0\leq t\leq d-1,t\sim s\}$. We note that $V_s\cap V_t=\mathbb{F}_{q_1}$ if and only if $s\slashed{\sim}t$ and that $\phi_{q_1}(V_s)=V_s$. From \cite{Semaev} \S2 we then have
\[\mathbb{F}_{q_1^d}=\prod_{s\in S_{q_1}} V_s.\] From
\cite{Semaev}, Lemma 2.1 we know that for
$\xi\in\mathbb{F}_{q_1}^d$, then $\xi$ is a normal basis generator for
$\mathbb{F}_{q_1^d}/\mathbb{F}_{q_1}$ if and only if $\xi$ has a non-zero component in each $V_s$ under this decomposition. In particular, we note that $\xi=\sum_{s\in S_{q_1}}\theta^s$ has this property. If $\xi$ is such an element, from
\cite{Semaev} Theorem 2.4 we then know that
$Tr_{\mathbb{F}_{q_1^d}/\mathbb{F}_{q^d}}(\xi)$ will be a normal
basis generator for $\mathbb{F}_{q^d}/\mathbb{F}_q$. Therefore,
  \[\eta=Tr_{\mathbb{F}_{q_1^d}/\mathbb{F}_{q^d}}(\sum_{s\in S_{q_1}}\theta^s)\] is a normal basis generator for $\mathbb{F}_{q^d}/\mathbb{F}_q$.


From Theorem \ref{construction_theorem} we know that $R_{\mathbb{F}_{q^{d}}/\mathbb{F}_q}(\eta)=v\bar{v}$ for some $v\in\mathbb{F}_q[G]^{\times}$ and that $v^{-1}\circ\eta$ is a self-dual normal basis generator. Our aim now is to construct $v$. 

The group $G$ is a finite cyclic group of exponent $d=r^i$ and the characteristic of $\mathbb{F}_q$
does not divide $d$. We let $\zeta_d$ be a fixed primitive $d$th root of unity and let $\tilde{\mathbb{F}_q}=\mathbb{F}_q(\zeta_d)$.
Let $\hat{G}=\Hom(G,\tilde{\mathbb{F}}_q^{\times})$ be the character group of $G$ over $\tilde{\mathbb{F}_q}$ and for $\chi\in\hat{G}$
define $\mathbb{F}_q(\chi)$ as the field obtained from $\mathbb{F}_q$ by adjoining the values $\chi(g)$ for all $g\in G$. Let $g$ be a fixed generator of $G$ and define $\chi_i\in\hat{G}$ as $\chi_i:g\mapsto \zeta_d^i$ for $i\in\{0,1,\ldots,d-1\}$.
  From \cite{Karpilovsky_Unit} Ch.4 Theorem 1.4 the map $\chi_s:\mathbb{F}_q[G]\rightarrow \mathbb{F}_q(\chi_s)$ induces a decomposition  isomorphism
   \[D:\mathbb{F}_q[G]\cong \prod_{s\in S}\mathbb{F}_q(\chi_s)\] where $S=S_q$ is the set of representatives of equivalence classes described above.

 Let $DR$ and $Dv$ be the images of $R_{\mathbb{F}_q^d/\mathbb{F}_q}(\eta)$ and $v$ respectively under this isomorphism. We then have $Dv=\prod_{s\in S}v_s$ for some $v_s\in \mathbb{F}_q(\chi_s)$ and $DR=Dv\overline{Dv}=\prod_{s\in S}v_s\overline{v_s}$. To calculate the image of an element under the inverse of the decomposition map $D$, it is only required to solve a
set of $|S|$ simultaneous linear equations. Therefore, we are just left with the task of calculating the action of $J$ on $\mathbb{F}_q(\chi_s)$ and finding $v_s\in \mathbb{F}_q(\chi_s)^{\times}$ such that $\chi_s(R_{\mathbb{F}_{q^d}/\mathbb{F}_q}(\eta))=v_s\overline{v_s}$ for each $s\in S$. In the following three lemmas we describe how to calculate $v_s$ for all $s\in S$.

\begin{lemma}
 \[v_0=Tr_{\mathbb{F}_{q^d}/\mathbb{F}_q}(\eta).\]\label{0_lemma}
\end{lemma}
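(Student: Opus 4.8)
The plan is to identify the character $\chi_0$ with the augmentation (trivial character) and then read off $v_0$ from the component of the decomposition isomorphism $D$ corresponding to $s=0$. First I would note that $\chi_0:g\mapsto\zeta_d^0=1$ for all $g\in G$, so $\chi_0$ is precisely the augmentation map $\varepsilon:\mathbb{F}_q[G]\rightarrow\mathbb{F}_q$, and in particular $\mathbb{F}_q(\chi_0)=\mathbb{F}_q$. Applying $\chi_0$ to the identity $R_{\mathbb{F}_{q^d}/\mathbb{F}_q}(\eta)=v\bar{v}$ from Theorem \ref{construction_theorem}, and using that the decomposition is a ring isomorphism, gives $\chi_0(R_{\mathbb{F}_{q^d}/\mathbb{F}_q}(\eta))=\chi_0(v)\chi_0(\bar v)=v_0\overline{v_0}$.

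Next I would compute both sides explicitly. On the component $\mathbb{F}_q(\chi_0)=\mathbb{F}_q$ the involution $J$ acts trivially (since $J(g)=g^{-1}$ and $\chi_0(g^{-1})=1=\chi_0(g)$), so $\overline{v_0}=v_0$ and the relation becomes $\chi_0(R_{\mathbb{F}_{q^d}/\mathbb{F}_q}(\eta))=v_0^2$. On the other hand, by the ``straightforward calculation left as an exercise'' already invoked in the proof of Theorem \ref{finite_p_theorem}, $\varepsilon(R_{\mathbb{F}_{q^d}/\mathbb{F}_q}(\eta))=Tr_{\mathbb{F}_{q^d}/\mathbb{F}_q}(\eta)^2$; since $\chi_0=\varepsilon$ this says $v_0^2=Tr_{\mathbb{F}_{q^d}/\mathbb{F}_q}(\eta)^2$.

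Finally I need to pin down the sign. A priori $v_0=\pm Tr_{\mathbb{F}_{q^d}/\mathbb{F}_q}(\eta)$, but since $v$ (hence $v_0$) is only determined up to the ambiguity inherent in Theorem \ref{construction_theorem} and the choice of $\sqrt{\ }$ (cf. Notation \ref{sqrt_notation}) — both $v$ and $\bar v$ work, and here $\bar v_0=v_0$ anyway, but we are free to replace the whole chosen square root by its negative in this component without affecting any of the constructions — we may simply \emph{take} $v_0=Tr_{\mathbb{F}_{q^d}/\mathbb{F}_q}(\eta)$. (One should check $\eta$ is a normal basis generator, so $Tr_{\mathbb{F}_{q^d}/\mathbb{F}_q}(\eta)\neq 0$ and this is a genuine unit of $\mathbb{F}_q=\mathbb{F}_q(\chi_0)$; this is immediate since a normal basis generator cannot lie in the kernel of the trace.) The only mild subtlety — and the one point worth stating carefully rather than the computation itself — is the justification that fixing this sign costs nothing: it amounts to observing that $v$ is defined only modulo the image of the squaring-ambiguity, and replacing $v_s$ by $-v_s$ in any single component $s$ yields another valid choice of $v$, since $(-v_s)\overline{(-v_s)}=v_s\overline{v_s}$.
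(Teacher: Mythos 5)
Your proof is correct and follows essentially the same route as the paper: identify $\chi_0$ with the augmentation map, use $\varepsilon(R_{\mathbb{F}_{q^d}/\mathbb{F}_q}(\eta))=Tr_{\mathbb{F}_{q^d}/\mathbb{F}_q}(\eta)^2$, note that $J$ acts trivially on the component $\mathbb{F}_q(\chi_0)=\mathbb{F}_q$, and take $v_0=Tr_{\mathbb{F}_{q^d}/\mathbb{F}_q}(\eta)$. Your extra discussion of the sign ambiguity and of $Tr_{\mathbb{F}_{q^d}/\mathbb{F}_q}(\eta)\neq 0$ is sound but not needed, since the lemma merely exhibits one valid choice of $v_0$ satisfying $\chi_0(R_{\mathbb{F}_{q^d}/\mathbb{F}_q}(\eta))=v_0\overline{v_0}$, which is exactly what the paper verifies.
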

\begin{proof}
 We have $\mathbb{F}_q(\chi_0)=\mathbb{F}_q$ and $J$ will act by
fixing the whole field. The character $\chi_0$ is actually
 the same map as the augmentation map $\varepsilon$, so we know that $\chi_0(R_{\mathbb{F}_{q^d}/\mathbb{F}_q}(\eta))=(Tr_{{\mathbb{F}_{q^d}/\mathbb{F}_q}}(\eta))^2$.
  Therefore, with $v_0=\overline{v_0}=Tr_{{\mathbb{F}_{q^d}/\mathbb{F}_q}}(\eta)$ we have $\chi_0(R_{\mathbb{F}_{q^d}/\mathbb{F}_q}(\eta))=v_0\overline{v_0}(=v_0^2)$.
\end{proof}

\ 

 For $0\neq s\in S$, we define $s'$ to be the representative of $(d-s)$ in $S$.

\begin{lemma}
Let $s\in S$ be such that $s\neq s'$, then \[v_s=\chi_s(r_{\mathbb{F}_{q^d}/\mathbb{F}_q}(\eta)) \text{\ \ and\ \ } v_{s'}=1 \text{\ \ \ \ for $s<s'$.}\] (Recall that $S$ can be thought of as a subset of $\{0,\ldots,d-1\}$ so that $<$ is defined.)\label{sns'_lemma}
\end{lemma}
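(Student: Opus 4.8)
The plan is to understand exactly how the group-algebra involution $J$ acts on the factor $\mathbb{F}_q(\chi_s)$ and to use this to reduce the equation $\chi_s(R) = v_s\overline{v_s}$ to something trivial. Since $G$ is cyclic with generator $g$ and $\chi_s(g) = \zeta_d^s$, we have $\mathbb{F}_q(\chi_s) = \mathbb{F}_q(\zeta_d^s)$. The involution $J$ sends $g \mapsto g^{-1}$, hence on the $\chi_s$-component it sends $\zeta_d^s \mapsto \zeta_d^{-s}$, i.e. $\overline{\chi_s(a)} = \chi_{d-s}(a) = \chi_{s'}(a)$ for $a \in \mathbb{F}_q[G]$, where $s'$ is the representative of $d-s$. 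The key hypothesis is $s \neq s'$, which means the decomposition $D$ groups the two characters $\chi_s$ and $\chi_{s'}$ into \emph{separate} field factors $\mathbb{F}_q(\chi_s)$ and $\mathbb{F}_q(\chi_{s'})$ (they lie in distinct $\sim$-equivalence classes), and $J$ interchanges these two factors — it does not act within a single factor. Concretely, under $D$ the pair of components indexed by $s$ and $s'$ looks like a factor $A \times A$ (with $A = \mathbb{F}_q(\chi_s) \cong \mathbb{F}_q(\chi_{s'})$) on which $J$ swaps the two coordinates.

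First I would make this "swap" picture precise: restricting $D$ to the two coordinates $s, s'$, an element $a \in \mathbb{F}_q[G]$ maps to $(\chi_s(a), \chi_{s'}(a))$, and $\overline{a}$ maps to $(\chi_s(\overline a), \chi_{s'}(\overline a)) = (\overline{\chi_{s'}(a)}, \overline{\chi_s(a)})$; since complex-conjugation-type identification $\overline{\chi_s(a)} = \chi_{s'}(a)$ holds (as $a$ has coefficients in $\mathbb{F}_q$, which $J$ fixes, and $J$ only inverts group elements), this is precisely $(\chi_{s'}(a), \chi_s(a))$ — the coordinates get swapped. Then I would compute directly: with $v_s := \chi_s(r_{\mathbb{F}_{q^d}/\mathbb{F}_q}(\eta))$ and $v_{s'} := 1$, the element $v := \prod_{t \in S} v_t$ (using $v_0 = Tr(\eta)$ from Lemma~\ref{0_lemma} and the yet-to-be-defined values at the $s = s'$ positions) has, in its $(s, s')$-coordinates, the pair $(v_s, 1)$, while $\overline{v}$ has $(s,s')$-coordinates $(\overline{v_{s'}}, \overline{v_s})$ read off by the swap, namely $(1, \overline{v_s})$ — wait, more carefully, $\overline v$ in coordinate $s$ is $\overline{v_{s'}} = 1$ and in coordinate $s'$ is $\overline{v_s}$; so $v\overline v$ has coordinate $s$ equal to $v_s \cdot 1 = v_s$ and coordinate $s'$ equal to $1 \cdot \overline{v_s} = \overline{v_s}$. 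On the other hand $DR = D(r_{\mathbb{F}_{q^d}/\mathbb{F}_q}(\eta) \overline{r_{\mathbb{F}_{q^d}/\mathbb{F}_q}(\eta)})$ has coordinate $s$ equal to $\chi_s(r)\chi_s(\overline r) = \chi_s(r)\overline{\chi_{s'}(r)} = \chi_s(r) \cdot \chi_s(r)$... hmm, I need to be careful, but the upshot is that coordinate $s$ of $DR$ is $\chi_s(r(\eta)) \cdot \overline{\chi_{s}(r(\eta))}$, and by the swap the second factor equals $\chi_{s'}(r(\eta))$, so I should check that $\chi_s(R) = v_s \cdot (\text{coordinate } s' \text{ factor})$; matching coordinate-by-coordinate shows $\chi_s(R_{\mathbb{F}_{q^d}/\mathbb{F}_q}(\eta)) = v_s \overline{v_s}$ holds with the asserted $v_s$, because $\overline{v_s}$ (the $s'$-coordinate contribution) is exactly $\chi_{s'}(r(\eta))$, which is the "other half" of $\chi_s(R)$.

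The cleanest way to organise this, and the one I would actually write, is: since $J$ swaps the factors $\mathbb{F}_q(\chi_s)$ and $\mathbb{F}_q(\chi_{s'})$, the pair $(\chi_s(R), \chi_{s'}(R))$ is a norm from $A \times A$ to its $J$-fixed subring (which is the diagonal $A$), and \emph{any} element of the form $((u,1) \cdot \overline{(u,1)})$ with $u = \chi_s(r(\eta))$ already realises $(\chi_s(R), \chi_{s'}(R))$, because $R = r(\eta)\overline{r(\eta)}$ forces $\chi_s(R) = \chi_s(r(\eta)) \cdot \overline{\chi_s(r(\eta))}$ and the barred term, living in the $s'$-slot after applying $J$, equals $\chi_{s'}(r(\eta))$. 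Thus setting $v_s = \chi_s(r(\eta))$, $v_{s'} = 1$ does the job. The main obstacle — really the only subtle point — is bookkeeping the direction of the swap correctly and verifying that $\chi_s(r(\eta))$ is genuinely a \emph{unit} in $\mathbb{F}_q(\chi_s)$; the latter follows because $R_{\mathbb{F}_{q^d}/\mathbb{F}_q}(\eta) \in \mathbb{F}_q[G]^\times$ by Lemma~\ref{unit_lemma}, so each $\chi_t(R)$ is a unit, and $\chi_s(R) = v_s \overline{v_s}$ then forces $v_s$ to be a unit as well. The convention $s < s'$ just fixes which of the pair gets the value $1$; by symmetry the roles of $s$ and $s'$ could be exchanged, so no generality is lost.
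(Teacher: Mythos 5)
There is a genuine gap here, although part of it is traceable to what appears to be a misprint in the statement itself: the value that actually works --- and the one the paper's own proof uses --- is $v_s=\chi_s\bigl(R_{\mathbb{F}_{q^d}/\mathbb{F}_q}(\eta)\bigr)$, with the capital $R$, not the resolvend $r$. Your proposal defends $v_s=\chi_s\bigl(r_{\mathbb{F}_{q^d}/\mathbb{F}_q}(\eta)\bigr)$ literally, and this fails in two ways. First, $r_{\mathbb{F}_{q^d}/\mathbb{F}_q}(\eta)$ has coefficients in $\mathbb{F}_{q^d}$, not in $\mathbb{F}_q$, so $\chi_s(r(\eta))$ lies in $\mathbb{F}_{q^d}(\zeta_d)$ and in general not in $\mathbb{F}_q(\chi_s)$: an element $h$ of $\Gal\bigl(\overline{\mathbb{F}}_q/\mathbb{F}_q(\zeta_d)\bigr)$ restricting to $h\in G$ sends $\chi_s(r(\eta))$ to $\chi_s(h)\chi_s(r(\eta))$, so it is not fixed. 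Hence it cannot be the $s$-component of $Dv$ for any $v\in\mathbb{F}_q[G]^{\times}$, and the whole purpose of the lemma --- feeding $v=D^{-1}\bigl(\prod_t v_t\bigr)$ into Theorem \ref{construction_theorem} --- collapses. Second, your own swap computation already forces the correct value: with $v_{s'}=1$, the $s$-coordinate of $v\bar v$ is $v_s\cdot\overline{v_{s'}}=v_s$, so matching it with the $s$-coordinate of $DR$ demands $v_s=\chi_s(R(\eta))$. Your closing step, that ``$\overline{v_s}$ is the other half of $\chi_s(R)$,'' conflates the factorisation $\chi_s(R)=\chi_s(r(\eta))\cdot\chi_{-s}(r(\eta))$ (an identity about the resolvend, in which moreover $\chi_{-s}$ is only equivalent to, not equal to, $\chi_{s'}$) with the identity you actually need, namely $\chi_s(v\bar v)=\chi_s(R)$ for the specific $v$ you have defined; these impose different requirements on $v_s$, and only $v_s=\chi_s(R(\eta))$ meets the second.

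What you do have right is the structural picture, which is also the paper's: $J$ interchanges the two factors indexed by $s$ and $s'$, the element $R_{\mathbb{F}_{q^d}/\mathbb{F}_q}(\eta)$ is a $J$-fixed unit by Lemma \ref{unit_lemma}, and consequently the pair of coordinates can be realised as $(v_s,1)\overline{(v_s,1)}$ for a suitable $v_s$; the unit argument for $v_s$ is also fine once $v_s=\chi_s(R(\eta))$. One further bookkeeping caution: for $a\in\mathbb{F}_q[G]$ one has $\chi_s(\bar a)=\chi_{d-s}(a)$, and $\chi_{d-s}(a)$ equals $\chi_{s'}(a)$ only up to a power of the $q$-power Frobenius (since $d-s\equiv s'q^{j}\bmod d$). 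For elements with coefficients in $\mathbb{F}_q$ this twist is harmless, and it is exactly what verifies the $s'$-coordinate: $\chi_{s'}(v\bar v)=\chi_{-s'}(v)$, which is the appropriate Frobenius power applied to $v_s=\chi_s(R)$, and this equals $\chi_{-s'}(R)=\chi_{s'}(\bar R)=\chi_{s'}(R)$ because $R$ is $J$-fixed. Note this twist is \emph{not} harmless for the resolvend itself, whose coefficients live in $\mathbb{F}_{q^d}$, which is another reason the choice $\chi_s(r(\eta))$ cannot be pushed through.
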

\begin{proof}
 Assume $s\neq s'$, then $J$ acts by switching the components of $\mathbb{F}_q(\chi_s)$ and $\mathbb{F}_q(\chi_{s'})$ in the decomposition. We know that  $R_{\mathbb{F}_{q^d}/\mathbb{F}_q}(\eta)$ is fixed by $J$, so we must have $\chi_s(R_{\mathbb{F}_{q^d}/\mathbb{F}_q}(\eta))=\chi_{s'}(R_{\mathbb{F}_{q^d}/\mathbb{F}_q}(\eta))$.
 Therefore, for $s<s'$, if we let $v_s=\chi_s(R_{\mathbb{F}_{q^d}/\mathbb{F}_q}\eta)$ and $v_{s'}=1$, then
 $(\chi_s(R_{\mathbb{F}_{q^d}/\mathbb{F}_q}(\eta)),\chi_{s'}(R_{\mathbb{F}_{q^d}/\mathbb{F}_q}(\eta)))=(v_s,v_{s'})\overline{(v_s,v_{s'})}$.

\

\end{proof}

If $s=s'$ we remark that $\G$ will be a subgroup of $\Gal(\mathbb{F}_q(\chi_s)/\mathbb{F}_q)$, and so $\Gal(\mathbb{F}_q(\chi_s)/\mathbb{F}_q)$ will have an even order. Therefore any element of $\mathbb{F}_q(\chi_s)^{\G}$, i.e., any element of $\mathbb{F}_q(\chi_s)$ that is fixed by $\G$, is a square in $\mathbb{F}_q(\chi_s)$.

When $s=s'$ we will need to give different values of $v_s$ depending on whether $\chi_s(R_{\mathbb{F}_{q^d}/\mathbb{F}_q}(\eta))$ and/or $-\chi_s(R_{\mathbb{F}_{q^d}/\mathbb{F}_q}(\eta))$ are squares in $ \mathbb{F}_q(\chi_s)^{\G}$.

\begin{lemma}
Let $s\in S$ be such that $s=s'$ and recall Notation \ref{sqrt_notation}. \begin{enumerate}\item If $\chi_s(R_{\mathbb{F}_{q^d}/\mathbb{F}_q}(\eta))$ is a square in $ \mathbb{F}_q(\chi_s)^{\G}$, then \[v_s=\sqrt{\chi_s(R_{\mathbb{F}_{q^d}/\mathbb{F}_q}(\eta))}.\]

\item If neither $\chi_s(R_{\mathbb{F}_{q^d}/\mathbb{F}_q}(\eta))$ nor $-\chi_s(R_{\mathbb{F}_{q^d}/\mathbb{F}_q}(\eta))$ are squares in $\mathbb{F}_q(\chi_s)^{\G}$ (they will still be squares in $\mathbb{F}_q(\chi_s)$), then \[v_s=\sqrt{-\chi_s(R_{\mathbb{F}_{q^d}/\mathbb{F}_q}(\eta))}.\]
\item If $\chi_s(R_{\mathbb{F}_{q^d}/\mathbb{F}_q}(\eta))$ is not a square in $\mathbb{F}_q(\chi_s)^{\G}$ but $-\chi_s(R_{\mathbb{F}_{q^d}/\mathbb{F}_q}(\eta))$ is a square in $\mathbb{F}_q(\chi_s)^{\G}$, then \[v_s=\frac{\sqrt{(n-1)\chi_s(R_{\mathbb{F}_{q^d}/\mathbb{F}_q}(\eta))}}{\sqrt{-n}}+\frac{\sqrt{-\chi_s(R_{\mathbb{F}_{q^d}/\mathbb{F}_q}(\eta))}}{\sqrt{-n}}\]\end{enumerate}
where $n$ is the smallest integer in the set $\{1,\ldots,p-1\}$ such that $-n$ is a square in $\mathbb{F}_p$.
 \label{s=s'_lemma}
\end{lemma}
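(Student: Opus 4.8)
The plan is to read the statement as a norm computation in a quadratic extension of finite fields. Write $E:=\mathbb{F}_q(\chi_s)$ and $F:=E^{\G}$ for the subfield fixed by $\G=\{1,J\}$; since $s=s'$ the involution $J$ acts nontrivially on $E$, so $[E:F]=2$ and $J$ is the nontrivial element of $\Gal(E/F)$. Set $R:=R_{\mathbb{F}_{q^d}/\mathbb{F}_q}(\eta)$ and $\rho:=\chi_s(R)$. Because $R$ is fixed by $J$ and is a unit, $\rho\in F^{\times}$, and by the remark preceding the statement every element of $F^{\times}$ is a square in $E$; hence $\rho$, $-\rho$ and $(n-1)\rho$ (which differ from $\rho$ by units of $\mathbb{F}_p\subseteq F$) are all squares in $E$, so the square roots of Notation \ref{sqrt_notation} occurring below make sense in $E$. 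What must be checked is that the prescribed $v_s$ lies in $E^{\times}$ and satisfies $v_s\overline{v_s}=v_sJ(v_s)=\rho$. The one elementary fact I will use repeatedly is: if $w\in E$ with $w^2\in F$, then $J(w)^2=w^2$, so $J(w)=\pm w$, with $J(w)=w$ exactly when $w\in F$; thus $J(w)=-w$ precisely when $w^2$ is a non-square in $F$.

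Parts 1 and 2 are quick. In Part 1, $\rho$ is a square in $F$, so $v_s=\sqrt{\rho}\in F$ is fixed by $J$ and $v_s\overline{v_s}=v_s^2=\rho$. In Part 2, $\rho$ and $-\rho$ are both non-squares in $F$, whence $-1=\rho\cdot(-\rho)^{-1}$ is a square in $F$; then $v_s=\sqrt{-\rho}$ has square $-\rho$, a non-square in $F$, so $J(v_s)=-v_s$ and $v_s\overline{v_s}=-v_s^2=\rho$. In both cases $v_s\neq 0$ since $\rho\neq 0$.

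Part 3 is the substantive case, and the one where the auxiliary integer $n$ is needed. Here $\rho$ is a non-square in $F$ while $-\rho$ is a square, so $-1$ is a non-square in $F$; consequently $\mathbb{F}_{p^2}\not\subseteq F$ (as $-1$ is always a square in $\mathbb{F}_{p^2}$), so $[F:\mathbb{F}_p]$ is odd, and a standard Legendre-symbol computation then shows that an element of $\mathbb{F}_p^{\times}$ is a square in $F$ if and only if it is a square in $\mathbb{F}_p$. I would next read off the square classes of $n$ modulo $p$: since $-1$ is a non-square we have $n\geq 2$; minimality of $n$ makes $-(n-1)$ a non-square, so $n-1=(-(n-1))\cdot(-1)$ is a square; and $-n$ is a square by definition while $n=(-n)\cdot(-1)$ is a non-square. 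Transporting to $F$, the elements $-n$ and $n-1$ are squares in $F$, so $(n-1)\rho$ is (square)$\times$(non-square), a non-square in $F$. Put $\alpha=\sqrt{(n-1)\rho}$, $\beta=\sqrt{-\rho}$ and $\gamma=\sqrt{-n}$ in $E$; then $\beta,\gamma\in F$ with $\gamma\neq0$, whereas $\alpha\notin F$, so $J(\alpha)=-\alpha$, $J(\beta)=\beta$ and $J(\gamma)=\gamma$. With $v_s=(\alpha+\beta)/\gamma$, exactly the element in the statement, one computes
\[v_s\overline{v_s}=\frac{(\alpha+\beta)(-\alpha+\beta)}{\gamma^2}=\frac{\beta^2-\alpha^2}{\gamma^2}=\frac{-\rho-(n-1)\rho}{-n}=\rho,\]
and $v_s\neq0$ because $\alpha+\beta=0$ would force $\alpha=-\beta\in F$. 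This establishes $v_s\overline{v_s}=\chi_s(R)$ with $v_s\in\mathbb{F}_q(\chi_s)^{\times}$ in all three cases, which together with Lemmas \ref{0_lemma} and \ref{sns'_lemma} supplies the unit $v$ needed in Theorem \ref{construction_theorem}.

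I expect Part 3 to be the only genuine difficulty. The point is that when $\rho$ sits in the ``mixed'' square class neither $\sqrt{\rho}$ nor $\sqrt{-\rho}$ has $F$-norm $\rho$, so one must instead exhibit $v_s=x+y\sqrt{\rho}$ with $x,y\in F$ and $x^2-\rho y^2=\rho$; writing $E=F(\sqrt{\rho})$ (legitimate since $\rho$ is a non-square in $F$), the integer $n$ supplies exactly such a solution, namely $x=\sqrt{-\rho}/\sqrt{-n}$ and $y=\sqrt{n-1}/\sqrt{-n}$, which is available precisely because the minimality in the definition of $n$ forces $-n$ and $n-1$ to be squares in $F$ while $n$ and $\rho$ are not. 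Everything else is routine bookkeeping of square classes in the tower $\mathbb{F}_p\subseteq F\subseteq E$.
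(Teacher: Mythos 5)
Your proof is correct and follows essentially the same route as the paper: parts 1 and 2 are the identical eigenvalue observations, and in part 3 you use the same integer $n$ and the same identity $\bigl(\beta^2-\alpha^2\bigr)/\gamma^2=\rho$, with the $J$-eigenvalues determined by exactly the paper's square-class bookkeeping ($-1$ and $-(n-1)$ non-squares, hence $n-1$ and $-n$ squares). The only differences are cosmetic: you additionally verify $v_s\neq 0$, and your detour through $[\mathbb{F}_q(\chi_s)^{\G}:\mathbb{F}_p]$ being odd is harmless but not needed, since only the trivial direction (squares in $\mathbb{F}_p$ stay squares in the fixed field) is used.
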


\begin{proof}
If $s=s'$, then $J\in\Gal(\mathbb{F}_q(\chi_s)/\mathbb{F}_q)$ and $\G$ is a subgroup of $\Gal(\mathbb{F}_q(\chi_s)/\mathbb{F}_q)$.
We remark that this means $[\mathbb{F}_q(\chi_s):\mathbb{F}_q]$ is even, which only happens when the $r$th roots of unity are not contained in $\mathbb{F}_q$,
i.e., $r\slashed{|}q-1$. We know that $\chi_s(R_{\mathbb{F}_{q^d}/\mathbb{F}_q}(\eta))\in \mathbb{F}_q(\chi_s)^{\G}$ and $|\G|=2$,
so from the properties of finite field extensions, $\chi_s(R_{\mathbb{F}_{q^d}/\mathbb{F}_q}(\eta))$ is a square in $\mathbb{F}_q(\chi_s)$.

For brevity we now write $a=\chi_s(R_{\mathbb{F}_{q^d}/\mathbb{F}_q}(\eta))$.
\begin{enumerate}
 \item  If $a\in (\mathbb{F}_q(\chi_s)^{\G})^2$ then $\sqrt{a}$
is fixed by $J$ and $\sqrt{a}.J({\sqrt{a}})=a$.
\item If $-a\ \slashed{\in}\ (\mathbb{F}_q(\chi_s)^{\G})^2$,
 then $J({\sqrt{-a}})=-\sqrt{-a}$ and $\sqrt{-a}.J({\sqrt{-a}})=--a=a$.
\item Finally, suppose that $a\ \slashed{\in}\ (\mathbb{F}_q(\chi_s)^{\G})^2$ and
$-a\in (\mathbb{F}_q(\chi_s)^{\G})^2$, then a basis for $\mathbb{F}_q(\chi_s)$ over
$\mathbb{F}_q(\chi_s)^{\G}$ is $\{\sqrt{a},\sqrt{-a}\}$ with $J(\sqrt{a})=-\sqrt{a}$ and $J(\sqrt{-a})=\sqrt{-a}$.

 Let $n$ be the
smallest integer in $\{1,\ldots,p-1\}$ such that $-n$ is a square in
$\mathbb{F}_p$ (note that $-(p-1)$ is always a square so $n$ is well defined). This means that $-(n-1)$ is not a square in $\mathbb{F}_p$. Note that since $a\slashed{\in}\ (\mathbb{F}_q(\chi)^{\G})^2$ and $-a{\in}\ (\mathbb{F}_q(\chi)^{\G})^2$ we must have $-a^2\ \slashed{\in}\ (\mathbb{F}_q(\chi)^{\G})^2$ and therefore $-1\ \slashed{\in}\ (\mathbb{F}_q(\chi)^{\G})^2$. We then know that $-1$ and $-(n-1)$ are not squares in $\mathbb{F}_p$, and so $(n-1)$ is a square in $\mathbb{F}_p$. Moreover, $J$ fixes $\sqrt{n-1}$ and $\sqrt{-n}$, therefore
\[J\left({\frac{\sqrt{n-1}}{\sqrt{-n}}.\sqrt{a}+\frac{1}{\sqrt{-n}}.\sqrt{-a}}\right)=\frac{\sqrt{n-1}}{\sqrt{-n}}.(-\sqrt{a})+\frac{1}{\sqrt{-n}}.\sqrt{-a},\]
and so
\[\left(\frac{\sqrt{n-1}}{\sqrt{-n}}.\sqrt{a}+\frac{1}{\sqrt{-n}}.\sqrt{-a}\right).J\left({\frac{\sqrt{n-1}}{\sqrt{-n}}.\sqrt{a}+\frac{1}{\sqrt{-n}}.\sqrt{-a}}\right)=\frac{1-n-1}{-n}a=a.\]

\end{enumerate}

\

\end{proof}

We remark that to calculate the image of an element under the
inverse of the decomposition map $D$, it is only required to solve a
set of $|S|$ simultaneous linear equations.

\begin{theorem}
 Let $v=D^{-1}(Dv)$ where $Dv=\prod_{s\in S}v_s$ with $v_s$ as described in Lemmas \ref{0_lemma}, \ref{sns'_lemma} and \ref{s=s'_lemma}. Let $\eta$ be a normal basis generator for $\mathbb{F}_{q^d}/\mathbb{F}_q$ constructed as above,
 then \[\eta'=v^{-1}\circ\eta\]
 is a self-dual normal basis generator for $\mathbb{F}_{q^d}$ over $\mathbb{F}_q$.\label{finite_p'_theorem}
\end{theorem}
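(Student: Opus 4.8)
The plan is to verify that the element $v=D^{-1}(Dv)$ built from the local pieces $v_s$ is genuinely an element of $\mathbb{F}_q[G]^{\times}$ satisfying $R_{\mathbb{F}_{q^d}/\mathbb{F}_q}(\eta)=v\bar v$, and then invoke Theorem \ref{construction_theorem}(2) to conclude that $v^{-1}\circ\eta$ is a self-dual normal basis generator. First I would recall that the decomposition isomorphism $D:\mathbb{F}_q[G]\xrightarrow{\sim}\prod_{s\in S}\mathbb{F}_q(\chi_s)$ is $J$-equivariant when the right-hand side is given the involution induced by $J$, which acts on the factor $\mathbb{F}_q(\chi_s)$ as an element of $\Gal(\mathbb{F}_q(\chi_s)/\mathbb{F}_q)$ (fixing $\mathbb{F}_q(\chi_0)=\mathbb{F}_q$ pointwise, swapping the factors indexed by $s$ and $s'$ when $s\neq s'$, and acting nontrivially within the factor when $s=s'$). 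Consequently $D(v\bar v)=\prod_s v_s\overline{v_s}$, so it suffices to check componentwise that $v_s\overline{v_s}=\chi_s(R_{\mathbb{F}_{q^d}/\mathbb{F}_q}(\eta))$ for every $s\in S$.

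That componentwise verification is exactly the content of Lemmas \ref{0_lemma}, \ref{sns'_lemma} and \ref{s=s'_lemma}: for $s=0$ this is the computation $\chi_0(R)=Tr_{\mathbb{F}_{q^d}/\mathbb{F}_q}(\eta)^2$ together with the fact that $J$ fixes $\mathbb{F}_q$; for a pair $s<s'$ with $s\neq s'$ it follows from $J$ swapping the two factors and the $J$-invariance of $R$, which forces $\chi_s(R)=\chi_{s'}(R)$; and for $s=s'$ the three cases of Lemma \ref{s=s'_lemma} exhaust the possibilities according to whether $\chi_s(R)$, respectively $-\chi_s(R)$, is a square in $\mathbb{F}_q(\chi_s)^{\G}$, using in each case the explicit $J$-action on $\sqrt{\pm\chi_s(R)}$ recorded there. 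Since each $v_s$ lies in $\mathbb{F}_q(\chi_s)^{\times}$ (the square roots exist by the remark preceding Lemma \ref{s=s'_lemma}, and in case (3) one checks $\sqrt{-n}\neq 0$), the product $Dv=\prod_s v_s$ is a unit in $\prod_s\mathbb{F}_q(\chi_s)$, so $v=D^{-1}(Dv)\in\mathbb{F}_q[G]^{\times}$.

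It remains to observe that $\eta$, as constructed via Semaev's decomposition and the trace $Tr_{\mathbb{F}_{q_1^d}/\mathbb{F}_{q^d}}(\sum_{s\in S_{q_1}}\theta^s)$, really is a normal basis generator for $\mathbb{F}_{q^d}/\mathbb{F}_q$; this was established above from \cite{Semaev} Lemma 2.1 and Theorem 2.4, so Lemma \ref{unit_lemma}(2) applies and $R_{\mathbb{F}_{q^d}/\mathbb{F}_q}(\eta)\in\mathbb{F}_q[G]^{\times}$. Combining the above, $R_{\mathbb{F}_{q^d}/\mathbb{F}_q}(\eta)=v\bar v$ with $v\in\mathbb{F}_q[G]^{\times}$, and Theorem \ref{construction_theorem}(2) gives that $\eta'=v^{-1}\circ\eta$ is a self-dual normal basis generator for $\mathbb{F}_{q^d}$ over $\mathbb{F}_q$. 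The only delicate point — and the one I would write out most carefully — is the case analysis in the $s=s'$ components: one must be sure that the square roots invoked exist in the right subfields and that the identity $v_s\overline{v_s}=\chi_s(R)$ holds on the nose, in particular that the constant $n$ in case (3) does the job, i.e. that $(n-1)$ is a square while $-1$ is not, so that the cross term cancels and one is left with $\frac{-(n-1)-1}{-n}\,\chi_s(R)=\chi_s(R)$. Once that bookkeeping is in place the theorem follows formally.
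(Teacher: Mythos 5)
Your proposal is correct and follows the same route as the paper: the paper's own proof is precisely the one-line observation that the statement follows from Theorem \ref{construction_theorem} together with Lemmas \ref{0_lemma}, \ref{sns'_lemma} and \ref{s=s'_lemma}, and your componentwise verification that $v_s\overline{v_s}=\chi_s(R_{\mathbb{F}_{q^d}/\mathbb{F}_q}(\eta))$ under the $J$-equivariant decomposition $D$ is exactly the bookkeeping the paper leaves implicit. Your added checks (unit-ness of $v$, the $s=s'$ case analysis, and the cancellation $\frac{-(n-1)-1}{-n}=1$) are consistent with the paper's lemmas and fill in the details faithfully.
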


\begin{proof}
 This now follows directly from Theorem \ref{construction_theorem} and Lemmas \ref{0_lemma}, \ref{sns'_lemma} and \ref{s=s'_lemma}.

\

\end{proof}

\begin{remark} We originally proved Theorem \ref{construction_theorem} only for $\mathbb{F}_q$ with characteristic not equal to $2$. However, this construction is still valid when $p=2$ and $2\slashed{|}d$, so we have now constructed self-dual normal bases for finite extensions of finite fields whenever they exist.
\end{remark}

\section{Local Fields}\label{local}
Throughout this section we will always assume $p$ to be an odd prime and $K$ to be a finite extension of $\Q_p$. We fix an algebraic closure of $K$, denoted  $\bar{K}$, and let $L/K$ be a finite Galois extension inside $\bar{K}$ with abelian Galois group $G$. Let $\bo_K$ (resp. $\bo_L$) be the valuation ring of $K$ (resp. $L$) and let $l$ and $k$ be the residue fields of $L$ and $K$ respectively. Recall that we define $A_{L/K}$ to be the unique fractional $\bo_L$-ideal such that \[A_{L/K}=\D_{L/K}^{-1/2}.\]

From \cite{Fainsilber+Morales} Theorem 4.5 and \cite{Erez2} Theorem 1, we know that a self-dual integral normal basis for $A_{L/K}$ exists if and only if $L/K$ is at most weakly ramified (this result is valid even if $G$ is not abelian). In this section we describe how to construct such bases whenever they exist, providing $G$ is abelian and that the residue field of $K$ is not of characteristic $2$.

\subsection{Reduction of the Problem}

We begin by proving a series of results that will reduce the situation to finding self-dual integral normal bases for $A_{L/K}$ when $L/K$ is either totally ramified or unramified.

\begin{proposition}
 With $L/K$ as above, there exist fields $L',L^{un},L^{tot}\subseteq\bar{K}$ such that 
\begin{enumerate}
\item $L'/L$ is an unramified Galois extension of fields, 
\item $L'=L^{un}L^{tot}$,
\item $L^{tot}/K$ (resp. $L^{un}/K$) is a totally ramified (resp. unramified) Galois extension of fields.\end{enumerate}\label{L_decomposition_prop}
\end{proposition}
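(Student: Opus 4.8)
The plan is to untangle the inertia subgroup by adjoining to $L$ a suitable unramified extension of $K$: over the enlarged field $L'$ the inertia subgroup will acquire a cyclic complement, and $L^{un}$, $L^{tot}$ are then recovered as the two fixed fields.

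Write $I\leq G$ for the inertia subgroup of $L/K$ and $L^{0}=L^{I}$ for the maximal unramified subextension, so that $[L^{0}:K]=f$, $|I|=e$, $ef=[L:K]$. Since $G/I\cong\Gal(L^{0}/K)$ is cyclic, generated by the Frobenius $\phi_{L^{0}}$, I would fix a lift $g_{0}\in G$ of $\phi_{L^{0}}$ and put $M=\mathrm{ord}(g_{0})$; note $f\mid M\mid ef$. Let $L^{un}\subseteq\bar K$ be the unramified extension of $K$ of degree $M$ (so $L^{un}/K$ is unramified Galois), and set $L'=L\cdot L^{un}$. Base change of an unramified extension is unramified and $L^{un}/K$ is Galois, so $L'/L$ is unramified Galois — this is (1) — and $L\subseteq L'$ by construction.

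The crux is to identify $G'=\Gal(L'/K)$ and split it. Because $f\mid M$ we have $L\cap L^{un}=L^{0}$, so $G'$ identifies with the fibre product $\{(g,h)\in G\times\Gal(L^{un}/K):g|_{L^{0}}=h|_{L^{0}}\}$, an abelian group of order $eM$; since $L'/L$ is unramified, $L'/K$ has residue degree $M$, hence $L^{un}$ is its maximal unramified subextension and $I':=\Gal(L'/L^{un})$ is the inertia subgroup of $L'/K$, of order $e$. Writing $\phi$ for the Frobenius of $L^{un}/K$ (of order $M$), the element $(g_{0},\phi)$ lies in $G'$ because $g_{0}|_{L^{0}}=\phi_{L^{0}}=\phi|_{L^{0}}$, and it has order $\mathrm{lcm}(\mathrm{ord}(g_{0}),M)=M$; let $C=\langle(g_{0},\phi)\rangle$. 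As $I'=\ker\bigl(G'\to\Gal(L^{un}/K)\bigr)$, any element of $I'\cap C$ has the form $(g_{0}^{k},\phi^{k})$ with $\phi^{k}=1$, which forces $M\mid k$ and hence $g_{0}^{k}=1$; so $I'\cap C=1$, and since $|I'|\,|C|=eM=|G'|$ we obtain the internal direct product $G'=I'\times C$.

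Now take $L^{un}=(L')^{I'}$ (the same field as above) and $L^{tot}=(L')^{C}$; both are Galois over $K$ because $G'$ is abelian — this is the only point where the hypothesis on $G$ is used. Then $L^{un}L^{tot}=(L')^{I'\cap C}=L'$, which is (2); $L^{un}/K$ is unramified by construction, and $\Gal(L^{tot}/K)\cong G'/C\cong I'$ with inertia subgroup equal to the image of $I'$, i.e. all of $G'/C$ (since $I'C=G'$), so $L^{tot}/K$ is totally ramified — this is (3). The main obstacle is precisely arranging the splitting $G'=I'\times C$: it is essential to take the degree of the auxiliary unramified extension equal to $\mathrm{ord}(g_{0})$ (and not merely $f$, nor $[L:K]$), since this is exactly what makes $(g_{0},\phi)$ have order equal to that degree and hence meet the inertia trivially; everything else is routine ramification theory and the Galois correspondence.
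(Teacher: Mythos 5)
Your proof is correct, and it follows the same basic strategy as the paper: pass to a compositum $L'$ of $L$ with an unramified extension of $K$, exhibit a cyclic complement to the inertia subgroup of $\Gal(L'/K)$ generated by a lift of Frobenius, and take fixed fields. The difference lies in the execution. The paper adjoins the unramified extension of degree $m^2$, where $m$ is the exponent of $G$, takes a lift $a$ of a generator of $\Gal(K_{un}^{m^2}/K)$ so that $\langle a\rangle\cong C_{m^2}$, and then proves $F\cap K_{un}^{m^2}=K$ (where $F$ is the fixed field of $\langle a\rangle$) by an exponent-counting contradiction; this is the least transparent step of that argument. You instead adjoin the unramified extension of degree exactly $M=\mathrm{ord}(g_0)$ and obtain the splitting $G'=I'\times C$ by a direct order computation in the fibre product, which is cleaner, makes the complement property a one-line check, and produces a smaller auxiliary field $L'$ (of degree $M/f$ over $L$, with $M$ dividing $[L:K]$, rather than $m^2/n$). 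One small inaccuracy in your closing remark: it is not essential that the auxiliary degree equal $\mathrm{ord}(g_0)$; any multiple $N$ of $M$ works, since then $(g_0,\phi)$ has order $\mathrm{lcm}(M,N)=N$ and the same computation gives $I'\cap C=1$ with $|I'|\,|C|=eN=|G'|$. In particular the degree $[L:K]$, and the paper's choice $m^2$, both work --- which is precisely why the paper's construction succeeds; $M$ is simply the minimal admissible choice. This does not affect the validity of your proof, since you only use the choice $N=M$.
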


\begin{proof}
The fact that $L$ is contained in the compositum of a totally ramified and an unramified extension of $K$ is a direct result of the construction of the maximal abelian extension of $K$ in local class field theory. See, for example \cite{iwasawa} \S7.4. We give here a constructive proof that ensures that this compositum is an unramified extension of $L$.

Let $m$ denote the exponent of $G=\Gal(L/K)$. Let $G_0$ be the
decomposition group of $G$ and let $n=|G/G_0|$. Then the field fixed
by $G_0$ will be the unique unramified extension of $K$ of degree
$n$, denoted $K_{un}^n$, and will be the maximal unramified
extension of $K$ contained in $L$. We note that
$\Gal(K_{un}^n/K)\cong C_n$ and that $n|m$.

We now consider the unique unramified extension of $K$ of degree
$m^2$ in $\bar{K}$. We observe that $\Gal(K_{un}^{m^2}/K)\cong C_{m^2}$ and
$\Gal(K_{un}^{m^2}/K_{un}^n)\cong C_{m^2/n}$. We let
$L'=LK_{un}^{m^2}$ be the compositum of $L$ and $K_{un}^{m^2}$ inside $\bar{K}$ and
let $G'=\Gal(L'/K)$. We observe that $L'/L$ is unramified of degree
$m^2/n$ and that $G'_0\cong G_0$. We have $G'/G'_0\cong\Gal(K_{un}^{m^2}/K)\cong
C_{m^2}$ and that $|G'_0|=|G_0|$ divides $m$. Let $a\in G'$ be any element that maps onto a generator of $\Gal(K_{un}^{m^2}/K)$ when we quotient $G'$ by $G'_0$, then $<a>$ is a subgroup of $G'$ isomorphic to
$C_{m^2}$. We let $F$ be the field fixed by one such subgroup, see Fig. \ref{L'} for details.

\begin{figure}[ht]\hspace{1.5cm}\xymatrix{&L'\ar@{-}@/0pc/[dl]\ar@{-}@/l1pc/[dl]_{G'_0\cong
G_0}\ar@{-}@/0pc/[ddrr]\ar@{-}@/0pc/[ddrrrr]\ar@{-}@/r3pc/[ddrrrr]^{C_{m^2}}
\\K_{un}^{m^2}\ar@{-}@/l3pc/[dddrrr]_{C_{m^2}}\ar@{-}@/0pc/[ddrr]\ar@{-}@/r1pc/[ddrr]^{C_{m^2/n}}&&&&&&
\\&&&L\ar@{-}@/r1pc/[dd]^G\ar@{-}@/0pc/[dd]\ar@{-}@/0pc/[dl]\ar@{-}@/u0.5pc/[dl]_{G_0}&&F\ar@{-}@/0pc/[ddll]
\\&&K_{un}^n\ar@{-}@/r0.5pc/[dr]^{C_n}\ar@{-}@/0pc/[dr]\\&&&K}\caption{The extension $L'/K$ and intermediate fields}\label{L'}\end{figure}
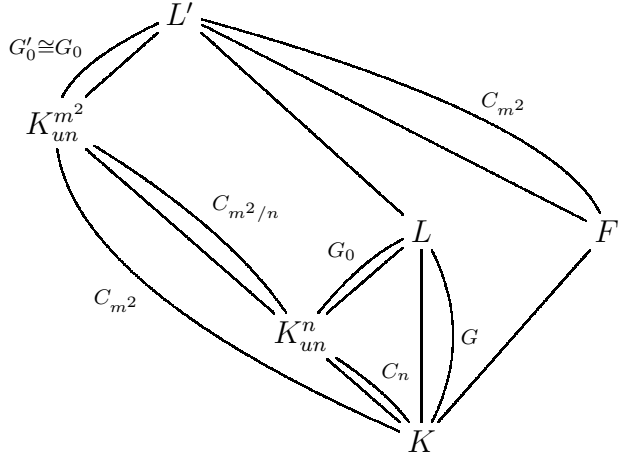

We now claim that $F\cap K_{un}^{m^2}=K$. Indeed, suppose not and
that there exists an integer $s|m^2$ with $F\cap
K_{un}^{m^2}=K_{un}^{s}$. 
\begin{itemize}\item If $s\leq n$, then $K_{un}^s\subseteq K_{un}^n\subseteq L$. In this case we let $H$ be the subgroup of $G$ that fixes $K_{un}^s$ and note that the exponent of $H$ is less than or equal to $m$ (as $m$ is the exponent of $G$).
\item If $s>n$, then $K_{un}^s\slashed{\subset} L$, but $K_{un}^s\subseteq K_{un}^sL$. In this case we let $H=\Gal(K_{un}^sL/K_{un}^s)\cong G_0$ and again note that the exponent of $H$ is less than or equal to $m$.\end{itemize}
With $H$ defined like this we must then have $\Gal(L'/K_{un}^{s})\leq
C_{m^2/s}\times H$. The exponent of $\Gal(L'/K_{un}^{s})$
is then at most $lcm(|H|,|C_{m^2/s}|)$, and so is at most $lcm(m,m^2/s)$. However, as $K_{un}^{s}\subseteq F$ we must have
$C_{m^2}$ as a subgroup of $\Gal(L'/K_{un}^{s})$, and so the
exponent of $\Gal(L'/K_{un}^{s})$ must be at least $m^2$. Therefore
the exponent of $\Gal(L'/K_{un}^{s})$ must be $m^2$ which means $s=1$, and so $F\cap K_{un}^{m^2}=K$. This in turn means that $F/K$ is
totally ramified, that $L'=FK_{un}^{m^2}$ and that $G'\cong
G'_0\times C_{m^2}$.

Finally we observe that $L'=L_{un}^{m^2/n}$ is the unique unramified extension of $L$ of degree $m^2/n$ contained in $\bar{K}$. \end{proof}

\begin{lemma} Let $L_1/K$ and $L_2/K$ be abelian Galois extensions in $\bar{K}$ such that $L_1/K$ is unramified, $L_1\cap L_2=K$ and $L_1L_2=L$. Let $x_1$ (resp. $x_2$) be a self-dual integral normal basis generator for $A_{L_1/K}$ (resp. $A_{L_2/K}$). Then $x_1x_2$ is a self-dual integral normal basis generator for $A_{L/K}$.\label{unram_mult_lemma} \end{lemma}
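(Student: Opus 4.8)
The plan is to reduce everything to multiplicativity of self-dual integral normal bases under the compositum of linearly disjoint extensions, one of which is unramified, and to check that the Galois module $A_{L/K}$ behaves correctly under such a compositum. First I would record the standard identification $\Gal(L/K)\cong\Gal(L_1/K)\times\Gal(L_2/K)$, writing $G_1=\Gal(L_1/K)$, $G_2=\Gal(L_2/K)$, with $L_i$ the fixed field of $G_j$ for $i\neq j$. Because $L_1/K$ is unramified, the different is multiplicative in the tower in the strong sense: $\D_{L/K}=\D_{L/L_2}\D_{L_2/K}\bo_L$ and $\D_{L/L_2}=\D_{L_1/K}\bo_L=\bo_L$, so $v_L(\D_{L/K})=e(L/L_2)\,v_{L_2}(\D_{L_2/K})$; combined with the parallel statement through $L_1$ this gives $A_{L/K}=A_{L_1/K}A_{L_2/K}\bo_L$ as fractional $\bo_L$-ideals (here $A_{L_1/K}=\bo_{L_1}$ since $L_1/K$ is unramified). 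Hence if $x_1$ generates $A_{L_1/K}$ over $\bo_K[G_1]$ and $x_2$ generates $A_{L_2/K}$ over $\bo_K[G_2]$, the product $x_1x_2$ generates $A_{L/K}$ over $\bo_K[G]=\bo_K[G_1]\otimes_{\bo_K}\bo_K[G_2]$, because the tensor product of two module generators is a generator of the tensor product module and $A_{L/K}$ is exactly that tensor product sitting inside $L=L_1\otimes_K L_2$.

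Next I would verify the self-duality of $x_1x_2$. Since the trace form is $G$-equivariant, it suffices (exactly as in Lemma \ref{mult_lemma}) to check $T_{L/K}(x_1x_2,(g_1,g_2)(x_1x_2))=\delta_{1,g_1}\delta_{1,g_2}$ for $g_i\in G_i$. Using the tower-compatibility of the trace, $T_{L/K}=T_{L_i/K}\circ T_{L/L_i}$, together with the fact that $(g_1,g_2)$ acts on $L=L_1\otimes_K L_2$ as $g_1\otimes g_2$, one factors
\[
T_{L/K}\bigl(x_1x_2,(g_1,g_2)(x_1x_2)\bigr)=T_{L_1/K}\bigl(x_1,g_1(x_1)\bigr)\,T_{L_2/K}\bigl(x_2,g_2(x_2)\bigr),
\]
and each factor is $\delta_{1,g_i}$ by hypothesis on $x_i$. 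This is the same computation as in Lemma \ref{mult_lemma}, now carried out at the integral level; the only extra point is that the individual traces land in $\bo_K$, which holds because $x_i\in A_{L_i/K}$ and $A_{L_i/K}$ is self-dual, so $T_{L_i/K}(A_{L_i/K},A_{L_i/K})\subseteq\bo_K$.

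Finally I would assemble the two parts: $x_1x_2\in A_{L/K}$ by the module-generator argument, and $x_1x_2$ is self-dual by the trace computation; a self-dual element is automatically a normal basis generator (as remarked in the introduction and as used throughout), so $x_1x_2$ is a self-dual integral normal basis generator for $A_{L/K}$. The main obstacle, and the step deserving the most care, is the identification $A_{L/K}=A_{L_1/K}A_{L_2/K}\bo_L$: one must make sure the different behaves multiplicatively with the claimed ramification indices and that the square-root ideals $A_{\bullet}$ are genuinely defined (i.e. that all relevant differents have even valuation, which is guaranteed here since $[L:K]$ is odd), and then that the natural map $A_{L_1/K}\otimes_{\bo_K}A_{L_2/K}\to A_{L/K}$ is an isomorphism of $\bo_K[G]$-modules rather than merely an inclusion — this follows because both sides are projective $\bo_K$-lattices of the same rank $[L:K]$ and the map is a surjection after tensoring with $K$, hence by a determinant/index count an isomorphism integrally. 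Everything else is a routine application of the standard tower formulae for trace and different.
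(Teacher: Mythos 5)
Your argument reaches the correct conclusion and shares the trace computation with the paper (both reduce self-duality to Lemma \ref{mult_lemma}), but it handles the integral part by a genuinely different route. The paper does not identify the module $A_{L/K}$ with a tensor product at all: it only proves the containment $x_1x_2\in A_{L/K}$, using $\D_{L/K}=\D_{L/L_2}\D_{L_2/K}$ and $\D_{L/L_2}=\bo_L$ (since $L/L_2$ is unramified) to get $A_{L/K}=A_{L_2/K}\bo_L$, and the observation that $x_1$ is a unit of $\bo_{L_1}$ (being a normal integral basis generator of $\bo_{L_1}=A_{L_1/K}$, it is nonzero modulo the maximal ideal), so that $v_L(x_1x_2)=v_L(x_2)\ge v_L(A_{L/K})$; the fact that a self-dual element lying in $A_{L/K}$ automatically generates $A_{L/K}$ over $\bo_K[G]$ is then delegated to \cite{Pickett} Lemma 8. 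Your approach instead proves generation directly from the stronger identification $A_{L/K}\cong A_{L_1/K}\otimes_{\bo_K}A_{L_2/K}$, which is a legitimate alternative and avoids the external lemma, at the cost of having to establish that identification.

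That identification is also the one place where your justification does not yet stand on its own. An injection of $\bo_K$-lattices of equal rank that becomes an isomorphism after tensoring with $K$ need not be surjective (compare $\pi\bo_K\subset\bo_K$), so ``surjection after tensoring with $K$, hence by a determinant/index count an isomorphism'' is not a proof until the index count is actually carried out. It can be done cleanly and without any discriminant computation: write $\bo_{L_1}=\bo_K[\zeta]$ with $\zeta$ a root of unity generating the unramified extension $L_1/K$; since $L/L_2$ is unramified of the same residue degree, $\bo_L=\bo_{L_2}[\zeta]$, so the multiplication map $\bo_{L_1}\otimes_{\bo_K}\bo_{L_2}\rightarrow\bo_L$ is surjective, and tensoring with $A_{L_2/K}$ over $\bo_{L_2}$ gives a surjection $A_{L_1/K}\otimes_{\bo_K}A_{L_2/K}\twoheadrightarrow A_{L_2/K}\bo_L=A_{L/K}$, which together with equality of $\bo_K$-ranks (or the valuation identity $v_L(A_{L/K})=v_{L_2}(A_{L_2/K})$ you already derived) yields the isomorphism. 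With that step repaired your proof is complete; alternatively, you could shortcut it entirely as the paper does, by checking only membership $x_1x_2\in A_{L/K}$ via the valuation argument and invoking \cite{Pickett} Lemma 8.
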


\begin{proof}From Lemma \ref{mult_lemma} we know that $Tr_{L/K}(x_1x_2,(x_{1}x_{2})^g)=\delta_{1,g}$ for all $g\in \Gal(L/K)$.
From \cite{Frohlich-Taylor} III.2.15 we know that $\D_{L/K}=\D_{L/L_2}\D_{L_2/K}$ and hence $A_{L/K}=A_{L/L_2}A_{L_2/K}$. We know $L/L_2$ is unramified, so $A_{L/L_2}=\bo_L$. Therefore, $v_L(A_{L/K})=v_L(A_{L_2/K}\bo_L)=v_{L_2}(A_{L_2/K})$. The element $x_{1}$ must be a unit in $\bo_{L_1}$ so $v_L(x_{1}x_2)=v_L(A_{L/K})$, hence $x_{1}x_{2}\in A_{L/K}$. The result now follows from \cite{Pickett} Lemma 8.\end{proof}

 \begin{lemma} Let $L/K$ be an abelian extension with Galois group $G$. Let $H$ be a subgroup of $G$. For $x\in L$, if $Tr_{G}(x,g(x))=\delta_{1,g}$ for $g\in G$, then
$Tr_{(G/H)}(Tr_{H}(x),Tr_{H}{\tilde{g}}(x))=\delta_{1,\tilde{g}}$ for $\tilde{g}\in G/H$. \label{trace_down_lemma}\end{lemma}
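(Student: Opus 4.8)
The plan is to compute $\mathrm{Tr}_{G/H}(\mathrm{Tr}_H(x), \mathrm{Tr}_H(\tilde g(x)))$ directly by unwinding the definitions and exploiting the $G$-invariance of the trace form, reducing everything to the hypothesis $\mathrm{Tr}_G(x,g(x)) = \delta_{1,g}$. First I would fix coset representatives: pick $\{\sigma_j\}$ a transversal for $H$ in $G$, so that every $g \in G$ is uniquely $\sigma_j h$ with $h \in H$, and note that $\mathrm{Tr}_{G/H}$ applied to an $H$-fixed element is the sum over the $\sigma_j$. Since $\mathrm{Tr}_H(x) \in L^H$, the quantity $\mathrm{Tr}_{G/H}(\mathrm{Tr}_H(x) \cdot \mathrm{Tr}_H(\tilde g(x)))$ makes sense as a sum over $j$ of $\sigma_j$ applied to the product.

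The key computation is then a double expansion: $\mathrm{Tr}_{G/H}(\mathrm{Tr}_H(x), \mathrm{Tr}_H(\tilde g(x))) = \sum_j \sigma_j\!\left( \sum_{h_1 \in H} h_1(x) \cdot \sum_{h_2 \in H} h_2(\tilde g x)\right) = \sum_j \sum_{h_1,h_2 \in H} \sigma_j h_1(x)\, \sigma_j h_2(\tilde g x)$. Using $G$-invariance of $T_{L/K}$, the term $\sigma_j h_1(x) \cdot \sigma_j h_2 \tilde g(x)$ has the same trace as $x \cdot (\sigma_j h_1)^{-1} \sigma_j h_2 \tilde g(x) = x \cdot h_1^{-1} h_2 \tilde g(x)$ (here I use that $G$ is abelian, so conjugation is trivial and $(\sigma_j h_1)^{-1}\sigma_j h_2 = h_1^{-1} h_2$). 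Summing over $j$ just multiplies by $|G/H|$, but more carefully: the full sum $\sum_{g \in G} T_{L/K}(x, g(x))$ regrouped via $g = \sigma_j h_1^{-1} h_2 \tilde g$ — I would instead organize the sum so that $\sum_j \sum_{h_1, h_2}$ becomes, after the substitution $h = h_1^{-1} h_2$ and relabeling, exactly $\sum_{\tilde g \text{ lifted}} \sum_{g \in G} T_{L/K}(x, g\tilde g(x))$ — no, cleaner: fix a lift $g_0$ of $\tilde g$; then $\sum_{h_1,h_2\in H} T_{L/K}(x, h_1^{-1}h_2 g_0(x))$ where as $h_1, h_2$ range over $H$, the element $h_1^{-1}h_2$ ranges over $H$ with multiplicity $|H|$, so this equals $|H|\sum_{h \in H} T_{L/K}(x, h g_0(x)) = |H| \sum_{h \in H}\delta_{1, hg_0}$. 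This last sum is $|H|$ if $g_0 \in H$, i.e. $\tilde g = 1$, and $0$ otherwise; summing over the $|G/H|$ choices of $\sigma_j$ gives $|H|\cdot|G/H| \cdot \delta_{1,\tilde g} = |G| \delta_{1,\tilde g}$.

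This produces a stray factor of $|G|$, which signals that I have the normalization of the trace pairing on $L^H/K$ slightly off — the correct statement must use $T_{L^H/K}$ for the quotient, and one checks $\mathrm{Tr}_{G/H} \circ \mathrm{Tr}_H = \mathrm{Tr}_G$ so no extra factor should appear; the resolution is that I should not expand both $\mathrm{Tr}_H$ factors independently but rather write $\mathrm{Tr}_{G/H}(\mathrm{Tr}_H(x)\,\mathrm{Tr}_H(\tilde g x)) = \mathrm{Tr}_G\big(\mathrm{Tr}_H(x)\cdot \tilde g(x)\big)$ using that one $\mathrm{Tr}_H$ can be absorbed (since $\mathrm{Tr}_H(x)$ is $H$-fixed, $\mathrm{Tr}_{G/H}(\mathrm{Tr}_H(x)\cdot y) = \mathrm{Tr}_G(\mathrm{Tr}_H(x) \cdot y /|H|)$... ) — I would need to be careful here, and the genuinely delicate point is getting this normalization right. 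The clean approach: $\mathrm{Tr}_{G/H}(\mathrm{Tr}_H(x), \mathrm{Tr}_H(\tilde g x)) = \sum_{g \in G/H}\sum_{h,h' \in H} T_{L/K}(gh(x), gh'\tilde g(x))$ and by invariance each summand equals $T_{L/K}(x, h^{-1}h'\tilde g(x))$; reindexing $g$ over a transversal and $h, h'$ over $H$ is the same as letting $gh$ range over all of $G$ and $h'$ over $H$, giving $\sum_{a \in G}\sum_{h' \in H} T_{L/K}(x, a^{-1}(ah'\tilde g)(x))$ where the inner element $a^{-1}\cdot a h' \tilde g = h'\tilde g$, so we get $|G/H|\cdot\sum_{h' \in H} T_{L/K}(x, h'\tilde g(x))$ — and the correct identity $\mathrm{Tr}_{G/H}\mathrm{Tr}_H = \mathrm{Tr}_G$ forces $\mathrm{Tr}_{G/H}(\mathrm{Tr}_H(x)\cdot z) = \sum_{g\in G}$ only after dividing the transversal sum appropriately; I expect the main obstacle to be precisely this bookkeeping, and I would resolve it by working with the explicit identity $\mathrm{Tr}_{G/H}(\mathrm{Tr}_H(x)\cdot\mathrm{Tr}_H(\tilde g(x))) = \sum_{g\in G}\sum_{h\in H} T_{L/K}(x, hg^{-1}g\tilde g(x))$ laid out so that the hypothesis $T_{L/K}(x, g(x)) = \delta_{1,g}$ collapses it to $\delta_{1,\tilde g}$ without spurious constants, appealing to $G$-invariance at each step and the abelian hypothesis to commute group elements freely.
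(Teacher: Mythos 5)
Your strategy points in the right direction, but the computation you actually write down never becomes correct, and the failure is concrete: you repeatedly replace an individual product $\sigma_j h_1(x)\cdot\sigma_j h_2\tilde g(x)$ by the pairing value $T_{L/K}(x,h_1^{-1}h_2\tilde g(x))$. The quantity $Tr_{(G/H)}\bigl(Tr_H(x),Tr_H(\tilde g(x))\bigr)$ is a single element of $K$, namely the sum over a transversal of $G/H$ of $\sigma_j\bigl(Tr_H(x)\,Tr_H(\tilde g(x))\bigr)$ -- a sum of $|G/H|\,|H|^2$ field products -- and a single product of two conjugates of $x$ is not equal to its own $G$-trace; substituting $T_{L/K}$-values termwise amounts to applying $Tr_G$ once per term, which is exactly where your spurious factor $|G|$ comes from. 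The same error recurs in your ``clean approach'': the asserted identity $Tr_{(G/H)}\bigl(Tr_H(x),Tr_H(\tilde g(x))\bigr)=\sum_{g\in G/H}\sum_{h,h'\in H}T_{L/K}\bigl(gh(x),gh'\tilde g(x)\bigr)$ is false (its right-hand side equals $|G|\,\delta_{1,\tilde g}$), and your final ``explicit identity'' $\sum_{g\in G}\sum_{h\in H}T_{L/K}\bigl(x,hg^{-1}g\tilde g(x)\bigr)$ also equals $|G|\,\delta_{1,\tilde g}$, not $\delta_{1,\tilde g}$. Since you yourself flag this normalization bookkeeping as the main obstacle and never resolve it, the argument as written does not prove the lemma.

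The fix is the one sentence you state in passing and then abandon: because $Tr_H(x)$ is $H$-fixed, $Tr_{(G/H)}\bigl(Tr_H(x)\cdot Tr_H(y)\bigr)=Tr_G\bigl(Tr_H(x)\cdot y\bigr)$ for any $y\in L$ (expand $Tr_G$ along cosets and use $h(Tr_H(x))=Tr_H(x)$); your parenthetical version $Tr_{(G/H)}(Tr_H(x)\cdot y)=Tr_G(Tr_H(x)\cdot y/|H|)$ is not even meaningful for $y\notin L^H$ and should be discarded. Absorbing one $Tr_H$ this way, with $y=g_0(x)$ for a lift $g_0$ of $\tilde g$, gives $Tr_G\bigl(Tr_H(x)\,g_0(x)\bigr)=\sum_{h\in H}T_{L/K}\bigl(x,h^{-1}g_0(x)\bigr)=\sum_{h\in H}\delta_{1,h^{-1}g_0}=\delta_{1,\tilde g}$, with no stray constants. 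This is essentially the paper's proof: there the double sum over $h_1,h_2$ is reindexed by $h=h_2h_1^{-1}$, the $h_1$-sum is recognised as an inner $Tr_H$, and $Tr_{(G/H)}\circ Tr_H=Tr_G$ collapses everything to $\sum_{h\in H}Tr_G\bigl(x\cdot\tilde g h(x)\bigr)=\delta_{1,\tilde g}$; the $|G|\,|H|$ products are grouped into $|H|$ full $G$-traces, of which at most one is nonzero, rather than into $|G|\,|H|$ separate pairing values.
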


\begin{proof}
\[\begin{array}{ll}Tr_{(G/H)}(Tr_{H}(x),{\tilde{g}}(Tr_{H}(x)))
&=Tr_{(G/H)}\left(\left(\displaystyle \sum_{h_1\in H}{h_1}(x)\right){\tilde{g}}\left(\displaystyle\sum_{h_2\in H}{h_2}(x)\right)\right)\\
 &=Tr_{(G/H)}\left(\left(\displaystyle \sum_{h_1\in H}{h_1}(x)\right)\left(\displaystyle\sum_{h_2\in H}{\tilde{g}h_2}(x)\right)\right)\\
  &=Tr_{(G/H)}\left(\displaystyle \sum_{h_1\in H}\sum_{h_2\in H}{h_1}(x){\tilde{g}h_2}(x)\right) \\
  &=Tr_{(G/H)}\left(\displaystyle \sum_{h_1\in H}\sum_{h\in H}{h_1}(x){\tilde{g}hh_1}(x)\right)\text{\ \ where $h_2h_1^{-1}=h$}  \\ 
&=Tr_{(G/H)}\left(\displaystyle \sum_{h_1\in H}\sum_{h\in H}{h_1}(x){h_1\tilde{g}h}(x)\right) \text{\ \ as $G$ is abelian}\\
   &=Tr_{(G/H)}\left(\displaystyle \sum_{h_1\in H}\sum_{h\in H}{h_1}(x.{\tilde{g}h}(x))\right)\\
&=Tr_{(G/H)}\left(\displaystyle Tr_H\left(\sum_{h\in H}x.{\tilde{g}h}(x)\right)\right)\\
  &=Tr_G\left(\displaystyle \sum_{h\in H}x.{\tilde{g}h}(x)\right)\end{array}.\]
By hypothesis $Tr_G(x.{\tilde{g}h}(x))=0$ unless $\tilde{g}=h^{-1}$. The only time this can happen with $h\in H$ and $\tilde{g}\in G/H$ is when $\tilde{g}=1$ and $h=1$. Therefore, $Tr_G\left(\sum_{h\in H}x.{\tilde{g}}h(x)\right)=\delta_{1,\tilde{g}}$ which proves the result.\end{proof}

\begin{theorem} Let $L'$, $L^{un}$ and $L^{tot}$ be as in Proposition \ref{L_decomposition_prop} and $L/K$ be as set out at the beginning of this section. Let $x_{un}$ (resp. $x_{tot}$) be a self-dual integral normal basis generator for $A_{L^{un}/K}$ (resp. $A_{L^{tot}/K}$), then \[Tr_{L'/L}(x_{un}x_{tot})\] is a self-dual integral normal basis generator for $A_{L/K}$. \label{product_theorem}
\end{theorem}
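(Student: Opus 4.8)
The plan is to combine Lemma \ref{unram_mult_lemma} with Lemma \ref{trace_down_lemma}, using the structure of $L'/K$ established in Proposition \ref{L_decomposition_prop}. By that proposition we have $L'=L^{un}L^{tot}$ with $L^{un}/K$ unramified, $L^{tot}/K$ totally ramified, and (since a totally ramified extension and an unramified extension are linearly disjoint over $K$) $L^{un}\cap L^{tot}=K$. Therefore $\Gal(L'/K)\cong\Gal(L^{un}/K)\times\Gal(L^{tot}/K)$. Given the self-dual integral normal basis generators $x_{un}$ for $A_{L^{un}/K}$ and $x_{tot}$ for $A_{L^{tot}/K}$, Lemma \ref{unram_mult_lemma} (applied with $L_1=L^{un}$, $L_2=L^{tot}$) shows that $x_{un}x_{tot}$ is a self-dual integral normal basis generator for $A_{L'/K}$.

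Next I would pass from $L'$ down to $L$ via the trace. Set $H=\Gal(L'/L)$, a subgroup of $G'=\Gal(L'/K)$, and recall from Proposition \ref{L_decomposition_prop} that $L'/L$ is unramified, so $G'$ is abelian and $H$ is normal with $G'/H\cong G=\Gal(L/K)$. Since $x_{un}x_{tot}$ is self-dual for $L'/K$, i.e. $Tr_{G'}(x_{un}x_{tot},g'(x_{un}x_{tot}))=\delta_{1,g'}$ for $g'\in G'$, Lemma \ref{trace_down_lemma} applies verbatim and yields
\[
Tr_{G'/H}\bigl(Tr_H(x_{un}x_{tot}),\,Tr_H(\tilde g(x_{un}x_{tot}))\bigr)=\delta_{1,\tilde g}
\]
for $\tilde g\in G'/H\cong G$; that is, $Tr_{L'/L}(x_{un}x_{tot})$ is a self-dual element for $L/K$, hence in particular a normal basis generator.

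It remains to check that $Tr_{L'/L}(x_{un}x_{tot})$ in fact lies in $A_{L/K}$ (equivalently, generates it as an $\bo_K[G]$-module, which by self-duality and \cite{Pickett} Lemma 8 is then automatic). Here I would argue at the level of valuations exactly as in the proof of Lemma \ref{unram_mult_lemma}: since $L'/L$ is unramified, $A_{L'/L}=\bo_{L'}$, and by the multiplicativity of the different (\cite{Frohlich-Taylor} III.2.15) one gets $A_{L'/K}=A_{L'/L}A_{L/K}=A_{L/K}\bo_{L'}$; because $L'/L$ is unramified, the trace $Tr_{L'/L}$ carries $A_{L'/K}=A_{L/K}\bo_{L'}$ onto $A_{L/K}$ (a surjective $\bo_L$-module map, as the trace of an unramified extension is surjective on valuation rings and $A_{L/K}$ is an $\bo_L$-ideal), so $Tr_{L'/L}(x_{un}x_{tot})\in A_{L/K}$. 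Combining this with the self-duality just established and invoking \cite{Pickett} Lemma 8 gives the result. The main obstacle is the last integrality/generation step: one must be careful that tracing down does not lose too much valuation, and the cleanest way is to observe that $x_{un}x_{tot}$ generates $A_{L'/K}$ over $\bo_K[G']$, apply $Tr_H$, and identify $A_{L'/K}^H$ with $A_{L/K}$ using that $A_{L'/K}=A_{L/K}\bo_{L'}$ with $L'/L$ unramified — everything else is formal once Proposition \ref{L_decomposition_prop} and Lemmas \ref{unram_mult_lemma} and \ref{trace_down_lemma} are in hand.
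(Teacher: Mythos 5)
Your proposal is correct and follows essentially the same route as the paper: Lemma \ref{unram_mult_lemma} gives that $x_{un}x_{tot}$ generates a self-dual integral normal basis of $A_{L'/K}$, Lemma \ref{trace_down_lemma} gives self-duality of the trace, and the integrality step rests on $A_{L'/K}=A_{L/K}\bo_{L'}$ coming from unramifiedness of $L'/L$ and multiplicativity of the different, finishing with \cite{Pickett} Lemma 8. The only cosmetic difference is that you justify $Tr_{L'/L}(A_{L/K}\bo_{L'})\subseteq A_{L/K}$ by $\bo_L$-linearity and surjectivity of the trace of an unramified extension, whereas the paper invokes the criterion of \cite{Serre} III \S 3 Prop.\ 7 with $\D_{L'/L}=\bo_{L'}$; both are valid and amount to the same fact.
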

\begin{proof}
 From Lemma \ref{unram_mult_lemma} we know that $x_{un}x_{tot}$ will be a self-dual integral normal basis for $A_{L'/K}$. From Lemma \ref{trace_down_lemma} we know that $Tr_{L'/L}(x_{un}x_{tot})$ will be a self-dual element of $L$, and so using \cite{Pickett} Lemma 8 we just need to show that $Tr_{L'/L}(x_{un}x_{tot})\in A_{L/K}$. It is therefore sufficient to show that $Tr_{L'/L}(A_{L'/K})\subseteq A_{L/K}$.

The extension $L'/L$ is unramified so we know that $\D_{L'/L}=\bo_{L'}$. From \cite{Serre} III \S3 Proposition 7, we therefore have $Tr_{L'/L}(A_{L'/K})\subseteq A_{L/K}$ if and only if $A_{L'/K}\subseteq A_{L/K}\bo_{L'}$. However, as $L'/L$ is unramified we have $v_{L'}(\D_{L'/K})=v_L(\D_{L/K})$, which implies $v_{L'}(A_{L'/K})=v_L(A_{L/K})=v_{L'}(A_{L/K}\bo_{L'})$. This now implies  $A_{L'/K}=A_{L/K}\bo_{L'}$, which proves the result.
\end{proof}

With $p$ an odd prime, $K$ a finite extension of $\Q_p$ and $L/K$ any finite weakly ramified abelian extension of odd degree we can using Proposition \ref{L_decomposition_prop} and Theorem \ref{product_theorem} to reduce the problem of finding a self-dual integral normal basis for $A_{L/K}$ to finding self-dual integral normal bases for $A_{L^{tot}/K}$ and $A_{L^{un}/K}$. We therefore continue considering the totally ramified and the unramified cases separately.

\subsection{Totally Ramified Extensions}
Any totally ramified abelian extension of $K$ will be contained in some division field arising from a Lubin-Tate formal group. We will give a brief overview of the definitions of such division fields and some of their properties. For a more  detailed account see, for example, \cite{iwasawa} or \cite{serre-lubintate}.

Recall that $K$ is a finite extension of $\Q_p$ and $\bar{K}$ is a fixed algebraic closure of $K$. Let $\pi$ be a uniformising
parameter for $\bo_K$ and let $q=|\bo_K/\bp_K|$ be the cardinality
of the residue field. We let $f(X)\in X\bo_K[[X]]$ be such that
\begin{eqnarray} \nonumber f(X)&\equiv&\pi X\mod \deg 2\txt{,\ \ \
\ \ and\ \ \ \ \ \ } f(X)\equiv X^q\mod\pi.\end{eqnarray}

There then exists a unique formal group $F_f(X,Y)\in\bo_K[[X,Y]]$
which admits $f$ as an endomorphism, i.e., $F_f\circ f=f\circ F_f$ and $F_f(X,Y)$ satisfies some
identities that correspond to the usual group axioms, see
\cite{serre-lubintate} \S3.2 for full details. For $a\in \bo_K$,
there exists a unique formal power series, $[a]_f(X)\in
X\bo_K[[X]]$, that commutes with $f$ and such that $[a]_f(X)\equiv
aX\mod \deg 2$. We note that these conditions ensure that $[\pi]_f(X)=f(X)$.

We can use the formal group, $F_f$, and the formal
power series, $[a]_f$, to define an $\bo_K$-module structure on
$\bp_{\bar{K}}^c=\bigcup_{L}\bp_{L}$, where the union is taken over
all finite Galois extensions $L/K$ where $L\subseteq \bar{K}$. We
let $E_{f,n}=\{x\in\bp_{\bar{K}}^c:[\pi^n]_f(x)=0\} $ and $E_f=\bigcup_n E_{f,n}$ and then define $K_{\pi,n}=K(E_{f,n})$ and $K_{\pi}=K(E_f)$. We remark that the sets $E_{f,n}$ and $E_f$ depend on
the choice of the polynomial $f$ but due to a property of the formal
group (see \cite{serre-lubintate} \S3.3 Prop. 4), $K_{\pi,n}$ and $K_{\pi}$
depend only on the uniformising parameter $\pi$. The extensions
$K_{\pi,n}/K$ and $K_{\pi}/K$ are totally ramified abelian extensions, $[K_{\pi,n}:K]=q^{n-1}(q-1)$ and every totally ramified abelian extension of $K$ in $\bar{K}$ is contained in $K_{\pi}$ for some $\pi$. When using Lubin-Tate division fields we shall always assume that $f(X)=[\pi](X)=X^q+\pi X$.

In this section we give different constructions for self-dual integral normal basis generators for $A_{L/K}$ depending on whether $L/K$ is tamely or wildly ramified.

\subsubsection{Totally Tamely Ramified Extensions}

 Let $L/K$ be a finite totally tamely ramified abelian extension with $[L:K]=d$ and $d$ odd. We know that $L$ must be contained in $K_{\pi,n}$ for some uniformising parameter $\pi$ of $\bo_K$ and some integer $n$. The degree $[K_{\pi,m+r}:K_{\pi,m}]$ is a power of $p$ for all $m,r\in\mathbb{N}$ and $m>1$, therefore $L\subseteq K_{\pi,1}$. As described above, we let $f(X)=[\pi](X)=X^q+\pi X$ and $K_{\pi,1}/K$ is the splitting field for the polynomial $X^{q-1}+\pi$. The $(q-1)$th roots of unity are contained in $K$ (see \cite{Frohlich-Taylor}, Theorem 25) and $d|(q-1)$, therefore $L/K$ must be a Kummer extension. Letting $\pi=-\tau$ we see that the field $L$ will then be the subfield of $K_{\pi,1}$ that splits the polynomial $X^d-\tau$. We let $\tau^{1/d}$ be a root of $X^d-\tau$, then $L=K(\tau^{1/d})$.

\begin{lemma} $Tr_{L/K}(\tau^{i/d})=0$ for all $d\slashed{|}i$. \label{Tr(xi)=0}\end{lemma}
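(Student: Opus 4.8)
The plan is to compute the trace directly using the Galois action on $\tau^{1/d}$. Since $L = K(\tau^{1/d})$ is a Kummer extension of degree $d$, and $K$ contains the $d$th roots of unity (as $d \mid q-1$), the Galois group $G = \Gal(L/K)$ acts by $\sigma(\tau^{1/d}) = \zeta_\sigma \tau^{1/d}$ where $\zeta_\sigma$ runs over the full group $\mu_d$ of $d$th roots of unity in $K$ as $\sigma$ runs over $G$. Consequently, for any integer $i$,
\[
Tr_{L/K}(\tau^{i/d}) = \sum_{\sigma \in G} \sigma(\tau^{i/d}) = \sum_{\sigma \in G} \zeta_\sigma^i \, \tau^{i/d} = \tau^{i/d} \sum_{\zeta \in \mu_d} \zeta^i.
\]

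The key step is then the standard character-sum identity: $\sum_{\zeta \in \mu_d} \zeta^i = 0$ whenever $d \nmid i$, and equals $d$ when $d \mid i$. This follows because $\mu_d$ is a cyclic group of order $d$; if $\xi$ is a generator, then $\sum_{\zeta \in \mu_d} \zeta^i = \sum_{j=0}^{d-1} \xi^{ij}$, which is a finite geometric series with ratio $\xi^i \neq 1$ when $d \nmid i$, hence sums to $(\xi^{id} - 1)/(\xi^i - 1) = 0$. Combining this with the displayed formula gives $Tr_{L/K}(\tau^{i/d}) = 0$ for all $i$ with $d \nmid i$, which is precisely the claim.

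There is essentially no obstacle here; the only point requiring a moment's care is the justification that $\sigma \mapsto \zeta_\sigma$ is a bijection from $G$ onto $\mu_d$ — this is the defining property of a Kummer extension of degree exactly $d$, and it has already been established in the surrounding text that $L/K$ is such an extension with $d \mid q-1$. One should also note that the argument does not require $d$ to be odd; that hypothesis is in force for other reasons in this section but is not needed for this particular lemma. I would therefore present the proof as the two displayed lines above together with a one-sentence invocation of the geometric-series identity.
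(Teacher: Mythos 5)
Your proof is correct, but it follows a different route from the paper. The paper's argument is a one-liner through the minimal polynomial: it observes that the minimal polynomial of $\tau^{i/d}$ over $K$ is the pure binomial $X^{d/\gcd(i,d)}-\tau^{i/\gcd(i,d)}$, so the sum of the conjugates of $\tau^{i/d}$ vanishes (the subleading coefficient is zero), and hence so does $Tr_{L/K}(\tau^{i/d})$ by transitivity of the trace through $K(\tau^{i/d})$. You instead compute $Tr_{L/K}(\tau^{i/d})$ directly as a sum over $\Gal(L/K)$ using the Kummer action $\sigma(\tau^{1/d})=\zeta_\sigma\tau^{1/d}$ and reduce everything to the vanishing of $\sum_{\zeta\in\mu_d}\zeta^{i}$ for $d\nmid i$. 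The trade-off: the paper's proof is shorter but tacitly requires knowing that the stated binomial really is the minimal polynomial (i.e.\ that $\tau^{i/d}$ has degree exactly $d/\gcd(i,d)$ over $K$, which needs a small argument when $\gcd(i,d)>1$), whereas your computation avoids any case distinction on $\gcd(i,d)$ and any irreducibility check, at the cost of invoking the bijection $\sigma\mapsto\zeta_\sigma$ between $\Gal(L/K)$ and $\mu_d$ --- which is immediate here since $\mu_d\subset K$ (as $d\mid q-1$) and $[L:K]=d$, as you correctly note. Your observation that oddness of $d$ plays no role in this lemma is also consistent with the paper, which remarks that the tame construction remains valid for $p=2$.
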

\begin{proof}The minimal polynomial of $\tau^{i/d}$ over $K$ is $X^{d/\gcd(i,d)}-\tau^{i/\gcd(i,d)}$, therefore $Tr_{L/K}(\tau^{i/d})=0$ for all $d\slashed{|}i$.
\end{proof}

\begin{lemma} Let
\[\begin{array}{ll}
x&=\displaystyle\frac{\tau^{(1-d)/2d}+\tau^{(3-d)/2d}+\ldots+\tau^{-1/d}+1+\tau^{1/d}+\ldots+\tau^{(d-1)/2d}}{d}\\
&=\displaystyle\frac{1}{d}\left(\frac{1-\tau}{1-\tau^{1/d}}\right)\tau^{(1-d)/2d}\end{array}\]
then $Tr_{L/K}(x,g(x))=\delta_{1,g}$. \label{kummerlemma}
\end{lemma}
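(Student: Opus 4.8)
The plan is to exploit the explicit shape of $x$ together with Lemma \ref{Tr(xi)=0} and an elementary root-of-unity sum. Since $d\mid q-1$, the field $K$ contains a primitive $d$-th root of unity $\zeta$, and $G=\Gal(L/K)$ is cyclic of order $d$ generated by the $\sigma$ with $\sigma(\tau^{1/d})=\zeta\tau^{1/d}$ (Kummer theory); hence $\sigma^{k}(\tau^{j/d})=\zeta^{jk}\tau^{j/d}$ for all $j,k\in\Z$. Writing $dx=\sum_{j\in J}\tau^{j/d}$, the set of numerators appearing is $J=\{-(d-1)/2,\dots,-1,0,1,\dots,(d-1)/2\}$; because $d$ is odd, $J$ is a complete set of residues modulo $d$ and it is symmetric, $J=-J$. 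This symmetry is the heart of the argument.

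First I would expand, for $g=\sigma^{k}$,
\[ d^{2}\,T_{L/K}(x,g(x))=\sum_{j\in J}\sum_{l\in J}\zeta^{lk}\,Tr_{L/K}\big(\tau^{(j+l)/d}\big). \]
By Lemma \ref{Tr(xi)=0} every term with $d\nmid j+l$ vanishes, while when $d\mid j+l$ one has $\tau^{(j+l)/d}\in K$, so $Tr_{L/K}(\tau^{(j+l)/d})=d\,\tau^{(j+l)/d}$. Now the key point: for $j,l\in J$ we have $|j+l|\le d-1<d$, so $d\mid j+l$ forces $j+l=0$, i.e.\ $j=-l$; and since $-l\in J$, for each $l$ there is exactly one such $j$, contributing $\tau^{0}=1$. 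Hence
\[ d^{2}\,T_{L/K}(x,g(x))=d\sum_{l\in J}\zeta^{lk}. \]

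Finally I would evaluate the character sum: as $l$ runs through the complete residue system $J$, $\sum_{l\in J}\zeta^{lk}=\sum_{l=0}^{d-1}(\zeta^{k})^{l}$, which equals $d$ if $d\mid k$, i.e.\ $g=1$, and $0$ otherwise since $\zeta^{k}$ is then a nontrivial $d$-th root of unity. Dividing by $d^{2}$ gives $T_{L/K}(x,g(x))=\delta_{1,g}$, as claimed. (The stated closed form $dx=\frac{1-\tau}{1-\tau^{1/d}}\,\tau^{(1-d)/2d}$ is merely the geometric-series identity $\sum_{j=0}^{d-1}\tau^{j/d}=\frac{\tau-1}{\tau^{1/d}-1}$ multiplied through by $\tau^{(1-d)/2d}$, and is not needed for the computation.) I do not expect a genuine obstacle here; the only care needed is the bookkeeping with exponents, and in particular the observation that an \emph{odd} $d$ is exactly what makes the symmetric window of exponents force $j=-l$ as the only surviving pairs — for even $d$ the middle term would spoil this, consistent with the nonexistence of self-dual bases in even degree.
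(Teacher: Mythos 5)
Your proof is correct and follows essentially the same route as the paper: expand $x\cdot g(x)$, kill the cross terms with Lemma \ref{Tr(xi)=0}, and reduce the surviving diagonal pairs to the geometric sum $\sum_{l}\zeta^{lk}=d\,\delta_{k,0}$. The only difference is bookkeeping — you keep the symmetric exponent window $\{-(d-1)/2,\dots,(d-1)/2\}$ so the surviving pairs are exactly $j=-l$, whereas the paper rewrites $x$ with exponents $0,\dots,d-1$ and coefficients $b_i\in\{1/d,1/(\tau d)\}$ and pairs $i$ with $d-i$; your version is, if anything, slightly cleaner.
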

\begin{proof}  We have $x=b_0
+b_1\tau^{1/d}+b_2\tau^{2/d}+\ldots+b_{d-1}\tau^{(d-1)/d}$, with
\[b_0,b_1,\ldots,b_{(d-1)/2}=1/d\text{\ \ \ and\ \ \ } b_{(d+1)/2},b_{(d+3)/2},\ldots,b_{d-1}=1/\tau d.\]

Let $g_k:\tau^{1/d}\mapsto\zeta^k\tau^{1/d}$ and $x_k={g_k}(x)$. We see that
\[x_k=\sum_{i=0}^{d-1}b_i\zeta^{ik}\tau^{i/d},\] and so \[xx_k=\sum_{i=0}^{d-1}\sum_{j=0}^{d-1}b_ib_j\zeta^{ik}\tau^{(i+j)/d}.\]
From Lemma \ref{Tr(xi)=0}, we know that
\[Tr_{L/K}(b_ib_j\zeta^{ik}\tau^{(i+j)/d})=(b_ib_j\zeta^{ik})Tr_{L/K}(\tau^{(i+j)/d})=0\] unless $d|(i+j)$.
Therefore,
\begin{equation}Tr_{L/K}(xx_k)=Tr_{L/K}\left(\sum_{i=0}^{d-1}b_ib_{d-i}\zeta^{ik}\tau\right). \label{trace equation}\end{equation}
Substituting the above values of $b_i$ into Equation (\ref{trace
equation}) we get
\[ Tr_{L/K}(xx_k)=Tr_{L/K}\left(\sum\limits_{i=0}^{d-1}(1/d).(1/\tau d).\zeta^{ik}.\tau\right)=Tr_{L/K}\left(\sum\limits_{i=0}^{d-1}\zeta^{ik}/d^2\right).\] When $k=0$,
 we have $(\sum_{i=0}^{d-1}\zeta^{ik}/d^2)=1/d$,
and so $T_{L/K}(x,x)=1$ as required. When $k\neq 0$, we have $(\sum_{i=0}^{d-1}\zeta^{ik}/d^2)=0$, and so
$T_{L/K}(x,{g}(x))=\delta_{1,g}$ as required.

\

\end{proof}

\begin{theorem}The element \[x=\displaystyle\frac{1}{d}\left(\frac{1-\tau}{1-\tau^{1/d}}\right)\tau^{(1-d)/2d}\] is a self-dual integral normal basis generator for $A_{L/K}$.\end{theorem}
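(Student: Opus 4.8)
The plan is to note that Lemma \ref{kummerlemma} already supplies the self-duality $Tr_{L/K}(x,g(x))=\delta_{1,g}$ for all $g\in G$, so that $x$ is in particular a normal basis generator of $L/K$; everything that remains is a statement about valuations, namely that $x$ lies in $A_{L/K}$. Indeed, once $x\in A_{L/K}$ is known, the Gram matrix of the trace form on the $\bo_K$-basis $\{g(x)\}_{g\in G}$ of $\bo_K[G].x$ is the identity matrix, so $\bo_K[G].x$ is unimodular and hence equals its own dual; combined with $A_{L/K}^D=A_{L/K}$ this forces $\bo_K[G].x\subseteq A_{L/K}\subseteq(\bo_K[G].x)^D=\bo_K[G].x$, i.e. $\bo_K[G].x=A_{L/K}$ (this is exactly the content of \cite{Pickett} Lemma 8). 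So the whole proof reduces to the single identity $v_L(x)=v_L(A_{L/K})$.

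First I would compute $v_L(A_{L/K})$. Since $L=K(\tau^{1/d})$ is totally tamely ramified of degree $d$ with $p\nmid d$, the polynomial $X^d-\tau$ is Eisenstein, so $\tau^{1/d}$ is a uniformiser of $\bo_L$ with $\bo_L=\bo_K[\tau^{1/d}]$, and $\D_{L/K}=(d\,\tau^{(d-1)/d})$. As $d$ is a unit in $\bo_K$ and $v_L(\tau)=d$, this gives $v_L(\D_{L/K})=d-1$ (cf. \cite{Serre} IV), hence $v_L(A_{L/K})=-(d-1)/2$, which is an integer because $d$ is odd.

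Next I would read off $v_L(x)$ from either expression for $x$. From the closed form $x=\tfrac1d\bigl(\tfrac{1-\tau}{1-\tau^{1/d}}\bigr)\tau^{(1-d)/2d}$ one has $v_L(1/d)=0$, $v_L(1-\tau)=0$ and $v_L(1-\tau^{1/d})=0$ (the last two since $v_L(\tau),v_L(\tau^{1/d})>0$), so $v_L(x)=v_L(\tau^{(1-d)/2d})=\tfrac{1-d}{2d}\cdot d=-(d-1)/2$. Equivalently, in the expansion $x=\tfrac1d\sum_{j}\tau^{j/d}$ with $j$ running over $(1-d)/2,\ldots,(d-1)/2$, the monomials $\tau^{j/d}$ have pairwise distinct valuations $v_L(\tau^{j/d})=j$, so the term of least valuation, $\tfrac1d\tau^{(1-d)/2d}$, does not cancel and again $v_L(x)=-(d-1)/2$.

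Comparing the two computations gives $v_L(x)=v_L(A_{L/K})$, hence $x\in A_{L/K}$; together with the self-duality from Lemma \ref{kummerlemma} and \cite{Pickett} Lemma 8 this shows $x$ is a self-dual integral normal basis generator for $A_{L/K}$. There is no genuine obstacle, only bookkeeping; the one point deserving a moment's care is the absence of cancellation among the monomials making up $x$, which is immediate once one observes that they all have distinct $L$-valuations.
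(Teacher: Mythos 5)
Your proposal is correct and follows essentially the same route as the paper: establish $v_L(x)=(1-d)/2=v_L(A_{L/K})$ using that $d$, $1-\tau$ and $1-\tau^{1/d}$ are units with $\tau^{1/d}$ a uniformiser, then invoke Lemma \ref{kummerlemma} and \cite{Pickett} Lemma 8. The only cosmetic difference is that you compute $v_L(\D_{L/K})=d-1$ from the derivative of the Eisenstein polynomial $X^d-\tau$, whereas the paper uses Hilbert's formula with $G=G_0$, $G_1=\{1\}$; both are standard and equivalent here.
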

\begin{proof}The extension $L/K$ is tamely ramified, therefore $\gcd(d,p)=1$ and $d$ is a unit in $\bo_L$. The polynomial $X^d-\tau$ is Eisenstein so $\tau^{{1/d}}$ is a uniformising parameter in $L$. Therefore $(1-\tau)$ and $(1-\tau^{1/d})$ are both units, and so $v_L(x)=(1-d)/2$.

Using the lower numberring of ramification groups, we have $G=G_0$ and $G_1=0$ as $L/K$ is totally tamely ramified. From Hilbert's formula for the valuation of the different (see \cite{Serre} IV \S2 Prop. 4), we then know that $v_L(\D_{L/K})=d-1$ and $v_L(A_{L/K})=(1-d)/2$. Therefore $x\in A_{L/K}$. From Lemma \ref{kummerlemma} we know that $Tr_{L/K}(x,g(x))=\delta_{1,g}$. Our result now follows from \cite{Pickett} Lemma 8.

\end{proof}

\begin{remark}
We remark that the construction in this section is still valid when $p=2$.
\end{remark}

\subsubsection{Totally Wildly Ramified Extensions}

We now let $L/K$ be a totally wildly weakly ramified abelian extension of odd degree with Galois group $G$. From \cite{Byott-Int_Gal_Mod_Struc_Some_LubinTate} \S4 Lemma 4.2 we know $L$ must be contained in $K_{\pi,2}$ for some uniformising parameter $\pi$ and that $[L:K]=p^i$ for some integer $i$. This lemma also shows that, with the lower numbering of ramification groups, $G=G_0=G_1$ and $G_2=\{1\}$. Our field $L$ will then be contained in the unique field $M\subset K_{\pi,2}$ such that $[M:K]=q$. We let $\Gamma=\Gal(M/K)$. See Fig. \ref{M/K}.

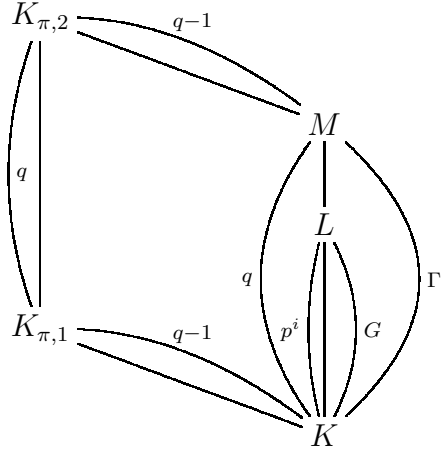
\begin{figure}[ht]\[\xymatrix{K_{\pi,2}\ar@{-}@/l1pc/[ddd]^{q}\ar@{-}@/u1pc/[drrr]^{q-1}\ar@{-}@/l0pc/[drrr]\ar@{-}@/l0pc/[ddd]
\\&&&M\ar@{-}@/l2pc/[ddd]_{q}\ar@{-}@/r3pc/[ddd]^{\Gamma}\ar@{-}@/l0pc/[d]\\&&&L\ar@{-}@/l0pc/[dd]\ar@{-}@/l0.5pc/[dd]_{p^i}\ar@{-}@/r1pc/[dd]^{G}\\K_{\pi,1}\ar@{-}@/u1pc/[drrr]^{q-1}\ar@{-}@/l0pc/[drrr]\\&&&K\\}\]\caption{Weakly ramified extensions of $K$}\label{M/K}\end{figure}

\begin{proposition}Let $L/K$ be an odd abelian $p$-extension with Galois group $G$ which is at most weakly ramified. Let $x\in L$ be an integral normal basis generator for $A_{L/K}$. Then $R_{L/K}(x)$ is a square in $\bo_K[G]^{\times}$ and using Notation \ref{sqrt_notation}, \[\frac{1}{\sqrt{R_{L/K}(x)}}\circ x\] is a self-dual integral normal basis generator for $A_{L/K}$. \label{add_weak_prop} \end{proposition}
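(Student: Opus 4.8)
The plan is to run the local analogue of the argument used for Theorem \ref{finite_p_theorem}. By Lemma \ref{unit_lemma} we already know $R_{L/K}(x)\in\bo_K[G]^{\times}$, so the first task is to show this unit is a square in $\bo_K[G]^{\times}$. Since $L/K$ is a $p$-extension, the prime-to-$p$ component $H$ of $G$ is trivial, so the exact sequence of Lemma \ref{exact sequence} reads $1\rightarrow 1+\J(\bo_K[G])\rightarrow\bo_K[G]^{\times}\rightarrow k^{\times}\rightarrow 1$, the surjection $\theta$ killing $G$ and reducing modulo $\pi$. The group $W$ of Teichm\"uller representatives in $\bo_K^{\times}$ (so $W\cong k^{\times}$) lies inside $\bo_K[G]^{\times}$, maps isomorphically onto $k^{\times}$ under $\theta$, and meets $\ker\theta=1+\J(\bo_K[G])$ trivially; hence $\bo_K[G]^{\times}=W\times(1+\J(\bo_K[G]))$. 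As noted in the proof of Lemma \ref{cohomology isomorphism lemma}, $1+\J(\bo_K[G])$ is a pro-$p$ group and $p$ is odd, so $a\mapsto a^{2}$ is an automorphism of it and every element of it is (uniquely) a square.

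Next I would compute the image of $R_{L/K}(x)$ in $k^{\times}$. The calculation left as an exercise in the proof of Theorem \ref{construction_theorem} gives $\varepsilon(R_{L/K}(x))=Tr_{L/K}(x)^{2}$ for the augmentation $\varepsilon$; since $R_{L/K}(x)$ is a unit, $Tr_{L/K}(x)\in\bo_K^{\times}$, so $\theta(R_{L/K}(x))$ equals the nonzero square $\overline{Tr_{L/K}(x)}^{2}$ in $k^{\times}$. Writing $R_{L/K}(x)=w(1+j)$ with $w\in W$ and $1+j\in 1+\J(\bo_K[G])$, we get $\theta(w)=\theta(R_{L/K}(x))$, so $w$ is a square in $W\cong k^{\times}$; and $1+j$ is a square in $1+\J(\bo_K[G])$ by the previous paragraph. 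Hence $R_{L/K}(x)=w_{0}^{2}$ for some $w_{0}\in\bo_K[G]^{\times}$, which establishes the first assertion.

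For the second assertion I would pin down the square root. By Theorem \ref{construction_theorem}(1), $R_{L/K}(x)=u\overline{u}$ for some $u\in\bo_K[G]^{\times}$, with $u^{-1}\circ x$ a self-dual integral normal basis generator for $A_{L/K}$; so it suffices to produce a square root $w_{0}$ of $R_{L/K}(x)$ with $\overline{w_{0}}=w_{0}$, for then $w_{0}\overline{w_{0}}=w_{0}^{2}=R_{L/K}(x)$ and $u$ may be taken to be $w_{0}=\sqrt{R_{L/K}(x)}$, giving $\frac{1}{\sqrt{R_{L/K}(x)}}\circ x=w_{0}^{-1}\circ x$ as the desired generator. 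Since $L[G]$ is commutative, $\overline{w_{0}}^{2}=\overline{w_{0}^{2}}=\overline{u\overline{u}}=u\overline{u}=w_{0}^{2}$. The involution $J$ fixes $W$ pointwise (its elements are scalars in $\bo_K$) and preserves $1+\J(\bo_K[G])$ (it fixes $\pi$ and sends each $g-1$ to $-g^{-1}(g-1)$), so writing $w_{0}=ws$ with $w\in W$, $s\in 1+\J(\bo_K[G])$ gives $\overline{w_{0}}=w\overline{s}$ and therefore $s^{2}=\overline{s}^{2}$; since squaring is injective on the pro-$p$ group $1+\J(\bo_K[G])$ this forces $s=\overline{s}$, hence $\overline{w_{0}}=w_{0}$, as required.

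The step I expect to need the most care is the structural one at the start: unlike the finite-field case of Theorem \ref{finite_p_theorem}, the unit group $\bo_K[G]^{\times}$ is \emph{not} the direct product of $\bo_K^{\times}$ and $1+\J(\bo_K[G])$, since these overlap in $1+\pi\bo_K$. One must instead split the sequence of Lemma \ref{exact sequence} off the Teichm\"uller part $W$, and then use both that these lifts are scalars fixed by $J$ and that $Tr_{L/K}(x)$ is a unit (so that $\theta(R_{L/K}(x))$ is a genuine square in $k^{\times}$, not merely in $k$). Once this is in place, the remainder is a routine transcription of the $\mathbb{F}_q[G]$ argument.
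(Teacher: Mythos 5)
Your proof is correct and follows essentially the same route as the paper: the split exact sequence of Lemma \ref{exact sequence} with $H$ trivial, the computation $\varepsilon(R_{L/K}(x))=Tr_{L/K}(x)^{2}$, the fact that squaring is an automorphism of the pro-$p$ group $1+\J(\bo_K[G])$, and the comparison with the factorisation $R_{L/K}(x)=u\bar{u}$ from Theorem \ref{construction_theorem} to force the square root to be $J$-fixed. Your explicit use of Teichm\"uller representatives simply makes precise the section $j:k^{\times}\rightarrow\bo_K[G]^{\times}$ that the paper uses for its splitting, so this is a refinement of wording rather than a different argument.
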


\begin{proof}
We essentially follow the same method in the proof of Theorem \ref{finite_p_theorem}.
The group $ G$ is a $p$-group, therefore from Lemma \ref{exact sequence} we know that the following sequence is exact
\begin{equation}1\rightarrow(1+\J)\rightarrow(\bo_K
 [G])^{\times}\stackrel{\theta}{\rightarrow} k^{\times}\rightarrow
1.\label{methodexactsequence}\end{equation} where $\J$ is the Jacobson radical of $\bo_K [G]$ and $k$ is the residue field of $K$. The homomorphism $\theta:(\bo_K [G])^{\times}\rightarrow k^{\times}$ is defined as
\[\theta:\sum_{ g\in  G}a_{ g} g\longmapsto (\sum_{ g\in  G}a_{ g})
\mod{\pi}\]for $\sum_{ g\in  G}a_{ g} g\in(\bo_K
 [G])^{\times}$. We now observe that $k^{\times}\subseteq(\bo_K
 [G])^{\times}$ so there exists a natural embedding,
$j:k^{\times}\rightarrow(\bo_K [G])^{\times}$ which has the property $\theta
j=1_{k^{\times}}$. Therefore the short exact
sequence, (\ref{methodexactsequence}), is split and
\[(\bo_K [G])^{\times}=k^{\times}\times(1+\J).\]

Let $\varepsilon:\bo_K [G]\rightarrow \bo_K$ be the augmentation map on $\bo_K [G]$. A straightforward calculation, left as an exercise, shows that $\varepsilon(R_{L/K}(x))=Tr_{ G}(x)^2$, and so $\theta(R_{L/K}(x))=\varepsilon(R_{L/K}(x))\mod\pi$ is a square in $k^{\times}$. The group $(1+\J)$ is a pro-$p$ group with
$(p,2)=1$, so every element of $(1+\J)$ will be a square. Therefore,
we can write $R_{L/K}(x)=w^2$ for some $w\in(\bo_K [G])^{\times}$.

From Theorem \ref{construction_theorem} we know that $R_{L/K}(x)=u\bar{u}$ for some $u\in(\bo_K [G])^{\times}$.
We then have $w^2=u\overline{u}$ and $\overline{w}^2=\overline{w^2}=\overline{u\overline{u}}=u\overline{u}$, therefore $w^2=\overline{w}^2$. Since $w\in(\bo_K[G])^{\times}$ we can write $w=(w_1,w_2)$ and $w^2=(w_1^2,w_2^2)$ with
$w_1\in k^{\times}$ and $w_2\in(1+\J)$. We then see that
$\overline{w}=(w_1,\overline{w}_2)$ and $\overline{w}^2=(w_1^2,\overline{w}_2^2)$. The group $(1+\J)$ is a
pro-$p$ group with $(p,2)=1$ so $\psi_2:a\mapsto a^2$ is an
automorphism of $(1+\J)$. Therefore, if $w_2^2=\overline{w}_2^2$
then $w_2=\overline{w}_2$, and so $w=\overline{w}$.

If we let $w=\sqrt{R_{L/K}(x)}$ the result then follows from Theorem \ref{construction_theorem}
\end{proof}

\begin{theorem}
Let $f(X)=X^q+\pi X$ and $\alpha$ be a primitive root of $[\pi^2]_f(X)$. Let $x=\alpha^{q-1}/p$, using Notation \ref{sqrt_notation}, we have that \begin{enumerate} \item $\displaystyle Tr_{M/L}\left(\frac{1}{\sqrt{R_{M/K}(x)}}\circ x\right)$ and \item $\displaystyle\frac{1}{\sqrt{R_{L/K}(Tr_{M/L}(x))}}\circ Tr_{M/L}(x)$ \end{enumerate}  are both self-dual integral normal basis generators for $A_{L/K}$. \end{theorem}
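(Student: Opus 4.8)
The plan is to deduce both assertions from Proposition \ref{add_weak_prop}, together with the descent lemmas already proved; the one genuinely new ingredient is the identification of a concrete integral normal basis generator of $A_{M/K}$.

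\emph{Step 1 (a generator of $A_{M/K}$).} First I would record that $x=\alpha^{q-1}/p$ is an integral normal basis generator for $A_{M/K}$, i.e.\ $A_{M/K}=\bo_K[\Gamma].x$. Since $f(X)=X^q+\pi X$ one has $[\zeta]_f(X)=\zeta X$ for every $(q-1)$st root of unity $\zeta\in K$, so the subgroup $\Gal(K_{\pi,2}/M)$ acts on the uniformiser $\alpha$ of $K_{\pi,2}$ by $\alpha\mapsto\zeta\alpha$; hence $\alpha^{q-1}$ is fixed by it, $\alpha^{q-1}\in M$, and $\alpha^{q-1}$ is a uniformiser of $M$. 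The extension $M/K$ is totally ramified, abelian, of odd degree $q$, and at most weakly ramified, with ramification groups $\Gamma_0=\Gamma_1$ and $\Gamma_2=\{1\}$, so $v_M(\D_{M/K})=2(q-1)$ and $v_M(A_{M/K})=-(q-1)$; using this one checks that $x$ has the right valuation and that $\{\gamma(x):\gamma\in\Gamma\}$ is an $\bo_K$-basis of $A_{M/K}$. This is the construction of \cite{Byott-Int_Gal_Mod_Struc_Some_LubinTate}. In particular Proposition \ref{add_weak_prop} applies to $M/K$.

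\emph{Step 2 (assertion 1).} Applying Proposition \ref{add_weak_prop} to $M/K$ and $x$, the element $x'=\frac{1}{\sqrt{R_{M/K}(x)}}\circ x$ is a self-dual integral normal basis generator for $A_{M/K}$ (and $R_{M/K}(x)$ is a square in $\bo_K[\Gamma]^{\times}$, so $\sqrt{R_{M/K}(x)}$ makes sense). Put $H=\Gal(M/L)$. Since $M/K$ is abelian and $Tr_{M/K}(x',\gamma(x'))=\delta_{1,\gamma}$, Lemma \ref{trace_down_lemma} gives $Tr_{L/K}(Tr_{M/L}(x'),\tilde g(Tr_{M/L}(x')))=\delta_{1,\tilde g}$ for $\tilde g\in\Gamma/H\cong G$, i.e.\ $Tr_{M/L}(x')$ is a self-dual element of $L$. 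It remains to check $Tr_{M/L}(x')\in A_{L/K}$, for which it suffices to show $Tr_{M/L}(A_{M/K})=A_{L/K}$. The tower formula $\D_{M/K}=\D_{M/L}\,(\D_{L/K}\bo_M)$ (all differents of even valuation, since all degrees are odd) gives $A_{M/K}=A_{M/L}\,(A_{L/K}\bo_M)$, whence $Tr_{M/L}(A_{M/K})=A_{L/K}\cdot Tr_{M/L}(A_{M/L})$ by $L$-linearity of the trace. Now $M/L$ is totally ramified, and its lower-numbering ramification groups are cut out from those of $M/K$, so they equal $\Gal(M/L)$ in degrees $0,1$ and are trivial in degree $2$; thus $v_M(\D_{M/L})=2([M:L]-1)$, $v_M(A_{M/L})=-([M:L]-1)$, and the standard formula $Tr_{M/L}(\bp_M^m)=\bp_L^{\lfloor(m+v_M(\D_{M/L}))/[M:L]\rfloor}$ (\cite{Serre}, Ch.~III) gives $Tr_{M/L}(A_{M/L})=\bo_L$. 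Hence $Tr_{M/L}(A_{M/K})=A_{L/K}$, so $Tr_{M/L}(x')\in A_{L/K}$, and \cite{Pickett} Lemma 8 shows it is a self-dual integral normal basis generator for $A_{L/K}$.

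\emph{Step 3 (assertion 2), and the main obstacle.} Here I would first show $y:=Tr_{M/L}(x)$ is an integral normal basis generator for $A_{L/K}$: writing $N_H=\sum_{h\in H}h\in\bo_K[\Gamma]$, the identity $Tr_{M/L}(a\circ x)=(N_Ha)\circ x$ for $a\in\bo_K[\Gamma]$ together with $N_Hh=N_H$ shows $Tr_{M/L}(\bo_K[\Gamma].x)=\bo_K[G].y$, which by Step 1 and the computation in Step 2 equals $Tr_{M/L}(A_{M/K})=A_{L/K}$. Since $L/K$ is an odd abelian $p$-extension which is at most weakly ramified, Proposition \ref{add_weak_prop} applies to $L/K$ and $y$, yielding that $R_{L/K}(y)$ is a square in $\bo_K[G]^{\times}$ and that $\frac{1}{\sqrt{R_{L/K}(y)}}\circ y$ is a self-dual integral normal basis generator for $A_{L/K}$, which is assertion 2. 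Everything after Step 1 is bookkeeping with differents and traces plus the already-established results (Proposition \ref{add_weak_prop}, Lemma \ref{trace_down_lemma}, \cite{Pickett} Lemma 8); the real content, and the step I expect to be the main obstacle, is Step 1 — that the explicit element $\alpha^{q-1}/p$ generates $A_{M/K}$ over $\bo_K[\Gamma]$ — which is where the Lubin–Tate structure of $K_{\pi,2}$ is used in an essential way and must be imported from \cite{Byott-Int_Gal_Mod_Struc_Some_LubinTate} or verified by the valuation-and-basis computation sketched above.
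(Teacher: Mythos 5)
Your overall strategy is sound and close to the paper's: the paper likewise reduces both assertions to Proposition \ref{add_weak_prop} and Lemma \ref{trace_down_lemma}, and its only explicit computation is the one you sketch in Step 1, namely that $[\zeta]_f(X)=\zeta X$ forces $N_{K_{\pi,2}/M}(\alpha)=\alpha^{q-1}$ to be a uniformiser of $M$, so that $v_M(\alpha^{q-1}/p)=1-q=v_M(A_{M/K})$. Where you diverge is in how the two normal-basis facts are justified. The paper quotes Vinatier (\cite{Vinatier-3}, Corollary 2.5), which does double duty: \emph{any} $x\in M$ with $v_M(x)=1-q$ is a normal basis generator of $A_{M/K}$, \emph{and} $Tr_{M/L}(x)$ is then a normal basis generator of $A_{L/K}$; with that, no descent computation is needed. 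Your Steps 2--3 reconstruct the second half of this by hand: the tower formula $\D_{M/K}=\D_{M/L}(\D_{L/K}\bo_M)$, the identity $H_i=\Gamma_i\cap H$ giving $v_M(\D_{M/L})=2([M:L]-1)$, Serre's formula yielding $Tr_{M/L}(A_{M/L})=\bo_L$ (your floor is the correct exponent), hence $Tr_{M/L}(A_{M/K})=A_{L/K}$, and the $N_H$-identity showing $Tr_{M/L}(\bo_K[\Gamma]\circ x)=\bo_K[G]\circ Tr_{M/L}(x)$. That part is correct, self-contained, and a legitimate alternative to citing Vinatier for the trace statement; it costs a page of bookkeeping but removes half the black box.

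The genuinely weak point is the one you flag yourself: Step 1. As written, ``one checks that $\{\gamma(x):\gamma\in\Gamma\}$ is an $\bo_K$-basis of $A_{M/K}$'' is not a routine valuation-and-basis computation: freeness of $A_{M/K}$ over $\bo_K[\Gamma]$ on this particular element is the non-trivial input, and the fact that the single condition $v_M(x)=1-q$ suffices is special to weak ramification. The paper's resolution is exactly to cite Vinatier's valuation criterion, after which only the valuation of $\alpha^{q-1}/p$ needs checking (via the norm computation above and $v_M(p)=q$); if you instead import the generator from \cite{Byott-Int_Gal_Mod_Struc_Some_LubinTate}, you must still identify Byott's generator with $\alpha^{q-1}/p$, which again reduces to such a criterion. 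So your proposal is correct modulo this citation — the same logical status as the paper's own proof — but the phrase ``one checks'' should be replaced by an explicit appeal to the valuation criterion together with the computation $v_M(\alpha^{q-1}/p)=1-q$.
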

\begin{proof}

In \cite{Vinatier-3} Corollary 2.5, Vinatier proves that any $x\in M$ with $v_M(x)=v_M(A_{M/K})=1-q$ is a normal basis generator for $A_{M/K}$ and that $Tr_{M/L}(x)$ is a normal basis generator for $A_{L/K}$. Therefore, if $v_M(x)=1-q$, both parts follow from Proposition \ref{add_weak_prop} and Lemma \ref{trace_down_lemma}.

We now just need to show that $v(\alpha^{q-1}/p)=1-q$. We know $\alpha$ is a uniformising parameter for $\bo_{K_{\pi,2}}$ as $[\pi^2](X)/[\pi](X)$ is Eisenstein. The extension $K_{\pi,2}/M$ is totally ramified, so $N_{K_{\pi,2}/M}(\alpha)$ will be a uniformising parameter for $\bo_M$. With the above choice of $f(X)$, we clearly have $[\zeta]_f(X)=\zeta X$ for $\zeta^{q-1}=1$. We then see that the group $\{[\zeta]_f(\alpha): \zeta^{q-1}=1\}$ is of order $(q-1)$ and each element is a distinct root of $[\pi^2](X)$, and so these must be the Galois conjugates of $\alpha$ for $K_{\pi,2}/M$. Therefore $N_{K_{\pi,2}/M}(\alpha)=\alpha^{q-1}$. Our result then follows as $v_M(p)=q$.

\end{proof}

\begin{remark}
\begin{enumerate}
\item When $p=2$ there will not exist any abelian totally wildly weakly ramified extensions of $K$ of odd degree. \end{enumerate}
\end{remark}

\subsection{Unramified extensions}
In this section we assume that $L/K$ is a finite abelian unramified extension of odd degree, $d$. Let $k$ (resp. $l$)
be the residue field of $\bo_K$ (resp. $\bo_L$). Let $|k|=q$, which implies that $|l|=q^d$. Let $\zeta\in\mu_{q^n-1}$
be a primitive $(q^n-1)$th root of unity in $\bar{K}$. From \cite{Frohlich-Taylor} Theorem 25 we know that $L=K(\zeta)$,
 $\bo_L=\bo_K[\zeta]$ and that $\Gal(L/K)=G$ is generated by the Frobenius element, $\phi_K:\zeta\mapsto \zeta^q$.

The group $G=\Gal(L/K)$ is a finite cyclic group, and so it can be written as
a finite product \[G=\prod_{i}C_{n_i}\] where $C_{n_i}$ is a cyclic
group of order $n_i$ such that $n_i$ is a power of a unique prime for all $i$.
We can then write $L$ as a compositum of fields $L=\prod_iL_i$ where
$\Gal(L_i/K)\cong C_{n_i}$, $L_i/K$ is unramified and $L_i\cap
L_j=K$ for all $i\neq j$. Using Lemma \ref{unram_mult_lemma}, we
are now reduced to considering the case where $L/K$ is cyclic and $d$
is some prime power.

Considering the extension of residue fields we will make crucial use of the basis generators constructed for extensions of finite fields. 

\subsubsection{Unramified $p$-extensions}
We first consider the case where $K$ is a finite extension of $\Q_p$ and $d$ is a power of $p$.

\begin{theorem} Let $K$ be a finite extension of $\Q_p$ with $p\neq 2$.
Let $L/K$ be a finite unramified abelian extension with Galois group $G$ and
$[L:K]$ a power of $p$. Let $x\in\bo_L$ such that
$x\equiv\eta\mod\bp_L$ where $\eta$ is the normal basis generator
for $l/k$ constructed as in Section \ref{p_finite_section} and with Notation \ref{sqrt_notation}, then \[\frac{1}{\sqrt{R_{L/K}(x)}}\circ
x\] is a self-dual integral normal basis generator for
$\bo_L=A_{L/K}$.
\end{theorem}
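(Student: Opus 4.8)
The plan is to deduce the statement from Proposition \ref{add_weak_prop}, since the present hypotheses are almost exactly those of that proposition; the only genuine content is checking that the lift $x$ really is an integral normal basis generator for $\bo_L$.

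First I would record the routine structural facts of the unramified situation. Since $L/K$ is unramified, $\D_{L/K}=\bo_L$, hence $A_{L/K}=\bo_L$; moreover $\bp_K\bo_L=\bp_L$, and the reduction map $G=\Gal(L/K)\to\Gal(l/k)$ is an isomorphism under which the $G$-action on $\bo_L$ reduces to the $\Gal(l/k)$-action on $l=\bo_L/\bp_L$. Also, as $p$ is odd, $[L:K]=p^n$ is odd, and an unramified extension is \emph{a fortiori} at most weakly ramified (its inertia group, and hence its second ramification group, is trivial). Thus $L/K$ is an odd abelian $p$-extension which is at most weakly ramified, which is precisely the setting of Proposition \ref{add_weak_prop}.

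Next I would show that $x$ is an integral normal basis generator for $A_{L/K}=\bo_L$, i.e. $\bo_K[G].x=\bo_L$. This is the Nakayama argument already used in the proof of Lemma \ref{unit_lemma}, Part 2: because $\eta$ generates $l$ as a $k[G]$-module and $x\equiv\eta\pmod{\bp_L}$, the $\bo_K[G]$-submodule $\bo_K[G].x$ of $\bo_L$ surjects onto $l=\bo_L/\bp_K\bo_L$; since $\bo_L$ is a finitely generated $\bo_K$-module and $\bp_K$ is the maximal ideal of the local ring $\bo_K$, Nakayama's lemma forces $\bo_K[G].x=\bo_L$.

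Finally I would invoke Proposition \ref{add_weak_prop} for this $x$: it yields that $R_{L/K}(x)$ is a square in $\bo_K[G]^{\times}$ and that $\tfrac{1}{\sqrt{R_{L/K}(x)}}\circ x$ is a self-dual integral normal basis generator for $A_{L/K}=\bo_L$, which is exactly the assertion. So the argument is essentially an assembly of earlier results; the only step requiring a moment's thought is the Nakayama step identifying $x$ as a normal basis generator, and the remainder is just verifying that the hypotheses of Proposition \ref{add_weak_prop} hold. There is no serious obstacle here — the real work is done in Section \ref{key} and in Proposition \ref{add_weak_prop}, and the point of this theorem is simply that Semaev's explicit $\eta$ for $l/k$ lifts to an explicit construction over $\bo_K$.
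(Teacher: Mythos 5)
Your proposal is correct and follows essentially the same route as the paper: a Nakayama argument (as in Lemma \ref{unit_lemma}, Part 2) to show that the lift $x$ of Semaev's generator $\eta$ is an integral normal basis generator for $\bo_L=A_{L/K}$, followed by an application of Proposition \ref{add_weak_prop}. The additional remarks you make (unramified implies weakly ramified, $[L:K]$ odd since $p\neq 2$) are exactly the implicit hypotheses the paper relies on, so there is no gap.
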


\begin{proof}
 Let $\pi$ be a uniformising parameter of $\bo_K$, then the Jacobson radical of $\bo_K$ is $\pi\bo_K$. By assumption we have $l= k[{G}].\eta$ which implies that $\bo_L\cong \bo_K[{G}].\eta\mod\pi\bo_L$. Therefore, by Nakayama's lemma (see \cite{Atiyah-MacDonald}, Ch.2 Prop. 2.6), we have $x\in\bo_L$ is an integral normal basis generator for $\bo_L$ if $x\equiv \eta\mod\pi\bo_L$. Our result now follows from Proposition \ref{add_weak_prop}.
\end{proof}

\subsubsection{Unramified $p'$-extensions}

\begin{theorem} For $K$ a finite extension of $\Q_p$ where $p\neq 2$
and for $L/K$ a finite abelian unramified extension of degree
$d=r^i$ where $r$ is an odd prime different to $p$.
 Let $x\in \bo_L$ be such that $x\equiv\eta'$ where $\eta'$ is as in Theorem \ref{finite_p'_theorem} and let $\sqrt{R_{L/K}(x)}$ be as in Notation \ref{sqrt_notation}, then \[\frac{1}{\sqrt{R_{L/K}(x)}}\circ x\] is a self-dual integral normal basis for $\bo_L=A_{L/K}$.
\end{theorem}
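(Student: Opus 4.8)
The plan is to follow the proof of Proposition \ref{add_weak_prop} almost verbatim; the one new ingredient is that $x$ was chosen to reduce modulo $\bp_L$ to a \emph{self-dual} generator $\eta'$ (available now that Theorem \ref{finite_p'_theorem} is proved) rather than to an arbitrary normal basis generator, and this forces $R_{L/K}(x)$ into the pro-$p$ subgroup $1+\J(\bo_K[G])$ of $\bo_K[G]^{\times}$, removing the need for any case analysis.

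First I would note that $L/K$ unramified gives $A_{L/K}=\bo_L$ and $\bp_L=\pi\bo_L$ for $\pi$ a uniformiser of $\bo_K$, so, exactly as in the preceding two theorems, Nakayama's lemma shows that $x\equiv\eta'\pmod{\pi\bo_L}$ makes $x$ an integral normal basis generator for $\bo_L=A_{L/K}$; hence $R_{L/K}(x)\in\bo_K[G]^{\times}$ by Lemma \ref{unit_lemma}(1). Reducing $R_{L/K}(x)=\sum_{g\in G}T_{L/K}(x,g(x))\,g$ modulo $\pi$ and using the isomorphism $\Gal(L/K)\xrightarrow{\sim}\Gal(l/k)$, each coefficient becomes $T_{l/k}(\eta',\bar g(\eta'))=\delta_{1,g}$ because $\eta'$ is self-dual for $l/k$; thus $R_{L/K}(x)\in 1+\pi\bo_K[G]=1+\J(\bo_K[G])$ (equality since $p\nmid d$). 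This last group is an abelian pro-$p$ group and $p\neq2$, so by \cite{Ribes-Zalesskii} IV Prop. 4.2.1 squaring is an automorphism of it; let $w\in1+\J(\bo_K[G])$ be the unique element with $w^{2}=R_{L/K}(x)$. Since $L[G]$ is commutative, $\overline{R_{L/K}(x)}=\overline{r_{L/K}(x)\,\overline{r_{L/K}(x)}}=R_{L/K}(x)$, so $\overline{w}^{2}=w^{2}$ with $\overline{w}\in1+\J(\bo_K[G])$, and injectivity of squaring gives $w=\overline{w}$. Taking $w=\sqrt{R_{L/K}(x)}$ as in Notation \ref{sqrt_notation}, we have $w\overline{w}=w^{2}=R_{L/K}(x)$ with $w\in\bo_K[G]^{\times}$, so Theorem \ref{construction_theorem}(1) shows $\frac{1}{\sqrt{R_{L/K}(x)}}\circ x=w^{-1}\circ x$ is a self-dual integral normal basis generator for $A_{L/K}=\bo_L$.

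I do not expect a genuine obstacle; the substance is the single observation that reducing to the self-dual $\eta'$ places $R_{L/K}(x)$ in $1+\J(\bo_K[G])$, which is exactly what lets us avoid the three-case split of Lemma \ref{s=s'_lemma} that was unavoidable at the finite-field level, where only an ordinary normal basis generator $\eta$ was available. The one point needing a line of care is that the reduction map $\bo_K[G]\to k[G]$ intertwines the involution $J$ and the resolvend $r_{L/K}$ with their residue-field counterparts, which is immediate since each is defined by the same formula on group-algebra elements.
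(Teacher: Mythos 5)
Your proposal is correct and follows essentially the same route as the paper: reduce modulo $\pi$ to see that the self-duality of $\eta'$ forces $R_{L/K}(x)\in 1+\J(\bo_K[G])$, use that squaring is an automorphism of this pro-$p$ group (as in Proposition \ref{add_weak_prop}) to get a $J$-fixed square root, and conclude via the argument of Theorem \ref{construction_theorem}. The only difference is that you spell out the Nakayama step and the $w=\overline{w}$ argument, which the paper leaves implicit or delegates to earlier proofs.
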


\begin{proof}
Let $\pi$ be a uniformising parameter for $\bo_K$. We have $x\equiv \eta'\mod\pi\bo_L$, which means
 \[\begin{array}{ll}Tr_G(x,g(x))&\equiv Tr_G(\eta',g(\eta'))\mod\pi\bo_K\\&\equiv\delta_{1,g}\mod\pi\bo_K.\end{array}\]
 Therefore, we have $R_{L/K}(x)=\sum_{g\in G}(x,g(x))g\in1+\pi\bo_K [G]$. This means that $R_{L/K}(x)\in 1+\J$ where $\J$ is the Jacobson radical of $\bo_K [G]$.
  We know that $1+\J$ is a pro-$p$ group, with $(p,2)=1$, therefore every element is a square.
  We then have $R_{L/K}(x)=\sqrt{R_{L/K}(x)}\sqrt{R_{L/K}(x)}$ and $\sqrt{R_{L/K}(x)}=\overline{\sqrt{R_{L/K}(x)}}$.
   The result now follows from the arguments in Theorem \ref{construction_theorem}.
\end{proof}

\begin{remark} \begin{enumerate}\item An observation of Vinatier is that it is seems very attractive to use the square-root of the inverse of an element to construct basis generators for the square-root of the inverse different.
\item The theory presented here is complete for finite fields and local fields $L/K/\Q_p$, with $p\neq 2$. When $p=2$ the only case needed to complete the theory is $L/K$ unramified and $[L:K]=r^i$ for some odd prime $r$ and $i\in\mathbb{N}$.\end{enumerate}
\end{remark}

\bibliography{bib}
\end{document}